\numberwithin{equation}{section}
\newcommand{\calF}{{\mathcal F}}
\newcommand{\calI}{{\mathcal I}}
\newcommand{\calQ}{{\mathcal Q}}
\newcommand{\calU}{\mathcal{U}}
\newcommand{\R}{{\mathbb{R}}}
\newcommand{\N}{{\mathbb{N}}}
\newcommand{\C}{{\mathbb{C}}}
\newcommand{\Z}{{\mathbb{Z}}}
\newcommand{\scrR}{{\mathscr R}}
\renewcommand {\b}{\mathfrak b}
\newcommand {\f}{\mathfrak f}
\newcommand {\g}{\mathfrak{g}}
\newcommand {\n}{\mathfrak n}
\renewcommand {\r}{\mathfrak r}
\newcommand{\kh}{{\Bbbk[[\hbar]]}}
\newcommand{\kqqm}{{\Bbbk\big[\,q\,,q^{-1}\big]}}
\newcommand{\vect}{\operatorname{Vect}}
\newcommand{\id}{{\textsl{id}}}
\newcommand {\ol}{\overline}
\newcommand {\wt}{\widetilde}
\newcommand {\wh}{\widehat}
\newcommand{\scsop}{\scriptscriptstyle \operatorname}
\newcommand{\serre}[1]{\mathsf{Serre}(#1)}
\newcommand{\drc}[1]{\delta_{#1}}
\newcommand{\ten}{\otimes}
\newcommand{\cf}[1]{{\mathbf 1}_{#1}}  
\newcommand{\fun}[1]{\mathfrak{f}_{#1}} 
\newcommand{\intsf}{\mathsf{Int}} 
\newcommand{\abf}[2]{\langle #1, #2\rangle} 
\newcommand{\rbf}[2]{\left( #1 | #2 \right)} 
\newcommand{\rls}[1]{\scrR} 
\newcommand{\mrls}[1]{\scrR_{\scsop{min}}} 
\newcommand{\crls}[1]{\ol{\scrR}} 
\newcommand{\xg}[2]{x^{#1}_{#2}}
\newcommand{\xz}[1]{\xi_{#1}}
\newcommand{\xp}[1]{\xg{+}{#1}}
\newcommand{\xm}[1]{\xg{-}{#1}}
\newcommand{\xpm}[1]{\xg{\pm}{#1}}
\newcommand{\uhg}{U_\hbar(\g)}
\newcommand{\uhgx}{U_\hbar(\,\g_X)}
\newcommand{\uhbxp}{U_\hbar\big(\b^+_X\big)}
\newcommand{\uhbxm}{U_\hbar\big(\b^-_X\big)}
\newcommand{\uhbxpm}{U_\hbar\big(\b^\pm_X\big)}
\newcommand{\utildehgx}{\widetilde{U}_\hbar(\,\g_X)}
\newcommand{\utildehbxp}{\widetilde{U}_\hbar\big(\b^+_X\big)}
\newcommand{\utildehbxm}{\widetilde{U}_\hbar\big(\b^-_X\big)}
\newcommand{\utildezgx}{\widetilde{U}_0(\,\g_X)}
\newcommand{\uqgx}{\calU_q(\,\g_X)}
\newcommand{\uqbxp}{\calU_q\big(\b^+_X\big)}
\newcommand{\uqbxm}{\calU_q\big(\b^-_X\big)}
\newcommand{\uqbxpm}{\calU_q\big(\b^\pm_X\big)}
\newcommand{\uqnxp}{\calU_q\big(\hskip1pt\n^+_X\big)}
\newcommand{\uqnxm}{\calU_q\big(\hskip1pt\n^-_X\big)}
\newcommand{\uqfx}{\calU_q\big(\,\f_X\big)}
\newcommand{\utildeqgx}{\widetilde{\calU}_q(\,\g_X)}
\newcommand{\utildeqbxp}{\widetilde{\calU}_q\big(\b^+_X\big)}
\newcommand{\utildeqbxm}{\widetilde{\calU}_q\big(\b^-_X\big)}
\newcommand{\utildeunogx}{\widetilde{\calU}_1(\,\g_X)}
\newcommand{\barXi}{\overline{\varXi}}
\newcommand{\barX}{\overline{X}}
\newcommand{\barK}{\overline{K}}
\newcommand{\barH}{\overline{H}}
\newcommand{\dcs}{\triangleright\negthinspace\negthinspace\triangleleft}
\newcommand{\qxg}[2]{X^{#1}_{#2}}
\newcommand{\qxz}[2]{K_{#1}^{#2}}
\newcommand{\qxp}[1]{\qxg{+}{#1}}
\newcommand{\qxm}[1]{\qxg{-}{#1}}
\newcommand{\qxpm}[1]{\qxg{\pm}{#1}}
\newcommand{\iM}[2]{{#1}\triangledown{#2}}
\newcommand{\im}[2]{{#1}\negthinspace\vartriangle\negthinspace{#2}}
\newcommand{\hext}[1]{{#1}[[\hbar]]}
\newcommand{\ia}{\alpha}
\newcommand{\ib}{\beta}
\newcommand{\ic}{\gamma}
\newcommand{\ca}[2]{\mathsf{a}_{#1#2}}
\newcommand{\qcb}[3]{\mathsf{b}^{#3}_{#1#2}}
\newcommand{\qcc}[3]{\mathsf{c}^{#3}_{#1#2}}
\newcommand{\qcr}[3]{\mathsf{r}^{#3}_{#1#2}}
\newcommand{\qcs}[3]{\mathsf{s}^{#3}_{#1#2}}
\newcommand{\triend}{\parbox{2mm}{\hfill} \hfill\mbox{\hspace{0.2mm}}\hfill$\triangle$}
\newcommand{\ocend}{\parbox{2mm}{\hfill} \hfill\mbox{\hspace{0.2mm}}\hfill$\oslash$}
\newtheorem{theorem}{Theorem}[subsection]
\newtheorem{proposition}[theorem]{Proposition}
\newtheorem{lemma}[theorem]{Lemma}
\newtheorem{corollary}[theorem]{Corollary}
\newtheorem{def-thm}{Definition--Theorem}
\newtheorem{corollary*}{Corollary}
\newtheorem*{theorem*}{Theorem}
\newtheorem*{definition*}{Definition}
\newtheorem*{def-thm*}{Definition--Theorem}
\newtheorem*{proposition*}{Proposition}
\newtheorem*{conjecture*}{Conjecture}
\numberwithin{equation}{section}
\numberwithin{theorem}{subsection}
\theoremstyle{remark}
\newtheorem{ex}[theorem]{Example}
\theoremstyle{definition}
\newtheorem{Observation}[theorem]{Observation}
\newtheorem{rem}[theorem]{Remark}
\newtheorem{defin}[theorem]{Definition}
\newtheorem*{defin*}{Definition}
\newenvironment{definition}{\begin{defin}}{\ocend\end{defin}}
\title[Quantum Duality Principle for quantum continuous K--M algebras]
  {Quantum duality principle for  \\
   quantum continuous Kac--Moody algebras}
\author[Fabio GAVARINI]{Fabio GAVARINI}
\address{Fabio GAVARINI  ---  Dipartimento di Matematica,
Universit\`a degli Studi di Roma ``Tor Vergata''  --  I-00133 Roma, ITALY  \quad --- \quad   e-mail: {\tt gavarini@mat.uniroma2.it}}
\thanks{Partially supported by the MIUR  \textsl{Excellence Department Project\/}  awarded to the Department of Mathematics of the University of Rome ``Tor Vergata'', CUP E83C18000100006.}
\subjclass[2020]{Primary: 17B37, 20G42; Secondary: 17B65, 17B62}
\keywords{Continuous Kac-Moody algebras; continuous quantum groups; quantization of Lie bialgebras; quantization of Poisson groups.}
\begin{document}

{\ }

\vskip-51pt

   \centerline{\small  \textsl{Journal of Lie Theory\/}  \textbf{32}  (2022), no.\ 2, 839--862}
 \vskip5pt
   \centerline{\small --- preprint  {\tt arXiv:2202.06090 [math.QA]}  (2022) --- }
 \vskip4pt
   \centerline{\small  \textsl{The original publication is available at}}
 \vskip1pt
   \centerline{\small \texttt{https://www.heldermann.de/JLT/JLT32/JLT323/jlt32039.htm}}

\vskip27pt   {\ }

\begin{abstract}
  For the quantized universal enveloping algebra  $ \uhgx $  associated to a continuous Kac-Moody algebra  $ \g_X $  as in  \cite{appel-sala-20},  we prove that a suitable formulation of the  \textsl{Quantum Duality Principle\/}  holds true, both in a ``formal'' version   --- i.e., applying to the original definition of  $ \uhgx $  as a  \textsl{formal\/}  QUEA over  $ \kh $  ---   and in a ``polynomial'' one   --- i.e., for a suitable polynomial form of  $ \uhgx $  over  $ \kqqm \, $.  In both cases, the QDP states that a suitable subalgebra of the given quantization of the Lie bialgebra  $ \g_X $  is in fact a suitable quantization (in formal or in polynomial sense) of a connected Poisson group  $ G_X^* $  dual to  $ \g_X \, $.
\end{abstract}

\maketitle

\thispagestyle{empty}

\vskip1cm

\tableofcontents

\section{Introduction}  \label{introduction}
   Quantum groups, in their standard formulation as suitable topological Hopf algebras on a ring of formal power series  $ \kh \, $,  exist in two versions.  Namely, our quantum group is called a quantized universal enveloping algebra (or QUEA, in short), when its specialization at  $ \, \hbar = 0 \, $  is the universal enveloping algebra of some Lie algebra (actually a  \textsl{Lie bialgebra\/}),  or a quantum formal series Hopf algebra (in short, a QFSHA) when its specialization is (the algebra of functions on) a formal algebraic/Lie group   --- actually, a  \textsl{Poisson group}.  The categories of QUEA's and of QFSHA's are antiequivalent to each other via linear duality, just like it happens for their semiclassical counterparts.  Surprisingly enough, they are also  \textsl{equivalent},  through explicit equivalence functors, originally sketched in  \cite[\S 7]{drinfeld-quantum-groups-87},  and later detailed in  \cite{gavarini-02}:  in a sloppy formulation, this phenomenon is known as  \textsl{Quantum Duality Principle}   --- hereafter shortened as QDP.
                                                         \par
   Roughly speaking, the QDP claims that every QUEA, resp.\ every QFSHA, can be ``renormalized'' as to give rise to a QFSHA, resp.\ to a QUEA: in either case, the new quantum algebra   --- sometimes called ``the Drinfeld-Gavarini dual'' of the original one ---   is a quantization of the object (Poisson group or Lie bialgebra, respectively) which is  \textsl{Poisson dual\/}  to the object that the original quantum algebra is a quantization of.  In particular, if  $ \uhg $  is a QUEA quantizing  $ U(\g) $  then the QDP provides an explicit, functorial construction of a suitable Hopf subalgebra  $ {\uhg}' $  of  $ \uhg $  which is a quantization of  $ F[[G^*]] \, $,  where  $ G^* $  is the formal Poisson group dual to the Lie bialgebra  $ \g \, $.
 In fact, by construction  $ {\uhg}' $  is in fact a  $ \kh $--integral  form of  $ \, \Bbbk(\!(\hbar)\!) \otimes_\kh \uhg \, $,  \,just like  $ \uhg $  itself is.  In the other direction, if  $ F_\hbar[[G]] $  is any QFSHA for the formal Poisson group  $ G $  then the QDP provides a different  $ \kh $--integral  form  $ {F_\hbar[[G]]}^\vee $  of  $ \, \Bbbk(\!(\hbar)\!) \otimes_\kh F_\hbar[[G]] \, $  that is indeed a QUEA for  $ \g^* \, $.
                                                         \par
   Note that the geometrical objects  $ \g $  and  $ G $  (and their Poisson dual) considered by the QDP in its original formulation are finite dimensional, though some aspects of its functorial construction do apply to the infinite setup as well.
 \vskip5pt
   In a different approach, where quantum groups are defined as standard (i.e., non-topological) Hopf algebras over the field  $ \Bbbk(q) $   --- that is, \`a la Jimbo-Lusztig, say ---   so that one deals with ``polynomial QUEA'' and ``polynomial QFA (=Quantum Function Algebras)'', a suitable polynomial version of the QDP has been developed  (cf.\ \cite{gavarini-02}).  In short, in this context one considers a Hopf algebra  $ \mathbb{H} $ over  $ \Bbbk(q) $  and an  $ \kqqm $--integral  form  $ H $  of it: the latter then are called QUEA or QFA depending on whether  $ \; H \Big/ (\,q-1)\,H \; $  has the form  $ U(\g) $  or  $ F[G] \, $,  \,whence one writes  $ \, H = \calU_q(\g) \, $  or  $ \, H = \calF_q[G] \, $,  \,respectively.  Then the ``polynomial'' QDP provides functorial recipes (direct adaptation of Drinfeld's original ones)  $ \, \calU_q(\g) \mapsto {\calU_q(\g)}' \, $  and  $ \, \calF_q[G] \mapsto {\calF_q[G] }^\vee \, $  such that  $ {\calU_q(\g)}' $  is a QFA for  $ G^* $  and  $ {\calF_q[G] }^\vee $  is a QUEA for  $ \g^* $  (actually, the complete result is much stronger, see  \cite[Theorem 2.2]{gavarini-02}).  In particular,  $ \calU_q(\g) $  and  $ {\calU_q(\g)}' $  are two  $ \kqqm $--integral  forms of the same  $ \mathbb{H} $,  and similarly for  $ \calF_q[G] $  and  $ {\calF_q[G] }^\vee \, $.  Indeed, in concrete examples, when the  $ \Bbbk(q) $--algebra  $ \mathbb{H} $  is given by a presentation by generators and relations, the difference between the two integral forms  $ \calU_q(\g) $  and  $ {\calU_q(\g)}' $  amounts to a different choice of generators (roughly, a different ``normalization'' of them), and similarly for  $ \calF_q[G] $  and  $ {\calF_q[G] }^\vee $  again.  For instance, for the usual Jimbo-Lusztig quantum group  $ \mathbb{U}_q(\g) $  over a finite-dimensional semisimple  $ \g $  one can realize that (up to details)  $ \calU_q(\g) $  is nothing but Lusztig's  \textsl{restricted\/}  form, while  $ {\calU_q(\g)}' $ is De Concini-Procesi`s  \textsl{unrestricted\/}  one.  As both can be defined even over  $ \Z\big[\,q\,,q^{-1}\big] \, $,  \,thus leading to (different) theories of  \textit{quantum groups at roots of 1\/},  we also see how the polynomial QDP is somehow deeply intertwined with the theory of quantum groups at roots of 1   --- although a formal, sound and complete theory about that correlation has still to be unveiled.
 \vskip7pt
   To date, the impact of the QDP   --- either in formal or in polynomial version ---   on the development of quantum group theory has been paramount, in a pervasive manner (although not always explicitly recognized).  Nevertheless, as in real life examples and constructions of QUEA's are available way more than of QFSHA's (or QFA's), people mostly applied the QDP in the direction  $ \, \text{QUEA} \mapsto \text{QFSHA} \, $  (or  $ \, \text{QUEA} \mapsto \text{QFA} \, $,  in the polynomial case).
                                                          \par
   For instance, the formal QDP was used in the very construction of (formal) QUEA's of Lie bialgebras   --- cf.\ \cite{appel-toledanolaredo-18, appel-toledanolaredo-19, etingof-kazhdan-96, enriquez-01, enriquez-05, halbout-06, enriquez-halbout-07}  ---   sometimes even extending its range of application to the infinite-dimensional framework.  These results were also extended to broader contexts, such as that of quasi-Hopf QUEA (over quasi-Lie bialgebras) --- cf.\ \cite{enriquez-halbout-04}  ---   that of super Lie bialgebras   --- cf.\ \cite{geer-06}  ---   that of (quantum) groupoids   --- cf.\ \cite{chemla-gavarini-15}  ---   that of (quantization of)  $ \Gamma $--Lie  bialgebras and Poisson-Hopf stacks over groupoids   --- cf.\ \cite{enriquez-halbout-08, halbout-xiang_tang-10}  ---   and that of Yangians   --- cf.\ \cite{kamnitzer-webster-weekes-yacobi-10, finkelberg-tsymbaliuk-19}.  In another direction, the formal QDP was also applied to study quantum  $ R $--matrices  and associated structures (and variations on this theme), both from a geometrical point of view and a representation-theoretic one, as in  \cite{gavarini-halbout-01, gavarini-halbout-03, enriquez-gavarini-halbout-03, enriquez-etingof-marshall-05}.  Another geometrical application was to quantum homogeneous spaces, as in  \cite{ciccoli-gavarini-06},  where the QDP was suitably extended to formal quantizations (both infinitesimal and global) of Poisson homogeneous spaces.
                                                          \par
   On the other hand, the polynomial version of the QDP is applied to (or is definitely underlying) the construction and study of new QFA's   --- in a finite dimensional setup (cf.\ \cite{deconcini-procesi} for the uniparameter case, and \cite{gavarini_PJM-98,garcia-gavarini-??} for the multiparameter case)  or an infinite one  (cf.\ \cite{beck-94, beck-96, beck-kac, gavarini-00})  ---   or new QUEA's   --- in a finite (cf.\ \cite{gavarini_CA-98, gavarini-rakic})  or infinite  (cf.\ \cite{gavarini-00})  dimensional setup.  In a more geometrical perspective, it was applied   --- again in the ``direction''  $ \, \text{QUEA} \mapsto \text{QFA} \, $  to the study of quantum  $ R $--matrices  (and related subjects) in  \cite{gavarini-97, gavarini-01}   --- respectively for finite and affine type Kac--Moody Lie bialgebras, and in the study of Poisson homogeneous spaces in  \cite{ciccoli-fioresi-gavarini-08}   --- where a suitable version of polynomial QDP is tailored ad hoc for the projective case ---   in  \cite{fioresi-gavarini-11}   --- where quantum Grassmannians are treated ---   and in  \cite{ciccoli-gavarini-14}   --- where a more general construction is provided.  Still on a geometrical side, in the wake of a very fruitful research line, the polynomial QDP was applied in  \cite{habiro-le-16}  to provide a new topological invariant of integral homology spheres.
                                                          \par
   Finally, despite being a phenomenon that is intrinsically ``quantum'' in nature, the QDP (in polynomial version) had also found a remarkable application back in ``classical'' Hopf algebra theory   --- cf.\ \cite{gavarini_JA-05}  ---   with lot of immediate applications at hand   --- e.g., see  \cite{gavarini_CMP-05}  for a pretty interesting example.
 \vskip7pt
   The purpose of the present work is to prove yet another instance of the QDP, both formal and polynomial, namely in the direction  $ \, \text{QUEA} \mapsto \text{QF(SH)A} \, $  for the quantization of the  \textsl{continuous Kac--Moody algebras\/}  by Appel, Sala and Schiffmann  (see \cite{appel-sala-schiffmann-18, appel-sala-20}.  Indeed, these (topological) Lie bialgebras, hereafter denoted by  $ \g_X \, $,  are uncountably infinite-dimensional, hence one cannot directly apply the QDP as stated and proved in  \cite{gavarini-02}.  Instead, starting from the formal QUEA  $ \uhgx $  we provide a direct definition of a suitable subalgebra  $ \utildehgx $  of  $ \uhgx $  and then we prove that it has exactly the properties predicted by the QDP, in particular  $ \utildehgx $  is a QFSHA whose semiclassical limit is  $ F\big[\big[G^*\big]\big] \, $.  Finally, we also prove that this  $ \utildehgx $  actually admits also a description that coincides with the one prescribed by the usual Drinfeld's functor  $ \, \uhg \mapsto {\uhg}' \, $.
                                                                     \par
   As a second step, we introduce a suitable  \textsl{polynomial\/}  QUEA  $ \uqgx $   --- easy to guess as a subalgebra of  $ \uhgx $  ---   and we realize for it the (polynomial) QDP by introducing by hands its appropriate Drinfeld-Gavarini dual  $ {\uqgx}' \, $.  Here again, we cannot apply the general recipe given in  \cite{gavarini-07}   (as the latter applies to the finite dimensional case only), but we give instead a direct definition of a suitable integral form  $ \utildeqgx \, $,  \,inspired by what is done for  $ \calU_q(\g) $  when  $ \g $  finite Kac--Moody  (cf.\  \cite{deconcini-procesi}  and  \cite{gavarini_PJM-98})  or affine Kac--Moody (see  \cite{beck-96, beck-kac}  and  \cite{gavarini-00}).  Later on, we also prove that this  $ \utildeqgx $  does coincide with what comes out if one literally applies the recipe for Drinfeld's functor  $ \, \calU_q(\g) \mapsto {\calU_q(\g)}' \, $  as given in  \cite{gavarini-07}.
                                                                     \par
   An important feature of the construction sketched above is the following.  In the ``indirect'' construction, mentioned above, of the Drinfeld-Gavarini dual  $ {\calU_q(\g)}' $  as a suitable  $ \kqqm $--integral  form of  $ \, \Bbbk(q) \otimes \calU_q(\g) \, $  when  $ \g $  is Kac--Moody finite or affine (as in the works of De Concini-Procesi, Beck and the author), a critical step is the construction of suitable ``quantum root vectors'' for any root, that are  \textsl{not\/}  available from scratch.  However, the Lie bialgebras  $ \g_X $  have a Kac--Moody like presentation which includes, as generators, the (analogue of) ``root vectors'' for all possible ``roots''; even more, the same is true for the QUEA's  $ \uhgx $  and  $ \uqgx $  alike.  Therefore, the ``critical step'' mentioned before is already fixed from scratch, so that performing the same construction of the  $ \kqqm $--integral  form  $ \, \utildeqgx = {\uqgx}' \, $  of  $ \, \Bbbk(q) \otimes \uqgx \, $  as mentioned above becomes an easy task.  Up to technicalities, the very same strategy can be followed in order to define  $ \, {\uhgx}' = \utildehgx \, $,  \,again because all needed quantum root vectors are already given by definition.
                                                                     \par
   As a last remark, we point out that both the QUEA  $ \uqgx $   --- for the (topological) Lie bialgebra $ \g_X $  ---   and the QFA  $ \, \utildeqgx = {\uqgx}' \, $   --- for the Poisson group  $ G_X^* $  ---   are actually defined over  $ \Z\big[\,q\,,q^{-1}\big] \, $:  \,hence, an ``arithmetic theory'' for specializations at roots of 1, much like Lusztig (for the QUEA side) and De Concini-Procesi and Beck (for the QFA side) did, in principle is at hand.

\vskip11pt

   \centerline{\sc acknowledgements}
 \vskip5pt
   The author wishes to thank Andrea Appel and Margherita Paolini for several useful conversations.

\bigskip

\section{Preliminaries}  \label{sec: prel's}
 In this section, we briefly recollect from the literature the main material that we shall deal with.

\medskip

\subsection{Quantization of Lie bialgebras and of (formal) Poisson groups}  \label{subsec: quant_Lie-bialg_&_P-groups}
 Hereafter we fix a base field  $ \Bbbk $  of characteristic zero.  We recall the following from  \cite{chari-pressley}.
 \vskip5pt
   For any Lie algebra  $ \g $  over  $ \Bbbk \, $,  its universal enveloping algebra  $ U(\g) $  has a canonical structure of Hopf algebra, which is cocommutative and connected.  If  $ \g $  is also a Lie bialgebra, with Lie cobracket  $ \delta \, $,  then  $ \delta $  uniquely extends to define a  \textsl{Poisson cobracket\/}  $ \, \delta : U(\g) \longrightarrow U(\g) \otimes U(\g) \, $,  just by imposing that it fulfill the co-Leibnitz identity  $ \; \delta(x\,y) \, = \, \delta(x) \, \Delta(y) + \Delta(x) \, \delta(y) \; $.  Conversely, if the Hopf algebra  $ U(\g) $  is actually even a Hopf  \textsl{co-Poisson\/}  algebra, then its Poisson co-bracket  $ \delta $  maps  $ \g $  into  $ \, \g \otimes \g \, $,  thus yielding a Lie cobracket for  $ \g $  that makes the latter into a Lie bialgebra.
                                                                    \par
   Dually, let  $ G $  be any formal algebraic group  $ G $  over  $ \Bbbk \, $:  by this we loosely mean that  $ G $  is the spectrum of its formal function algebra  $ F[[G]] \, $,  the latter being a topological Hopf algebra which is commutative and  $ I $--adically  complete, where  $ \, I := \textsl{Ker}\,(\epsilon) \, $  is the augmentation ideal of $ F[[G]] \, $.  Then  $ G $  is a (formal) Poisson group if and only if its formal function algebra  $ F[[G]] $  is actually a  \textsl{Poisson\/}  (formal) Hopf algebra, with respect to some Poisson bracket  $ \{\,\ ,\ \} \, $.  In this case, the cotangent space  $ \, I \big/ I^2 \, $  of  $ G \, $,  has Lie bracket induced by  $ \{\,\ ,\ \} $  via  $ \; [x,y] := \big\{ x' , y' \big\} \; \big( \text{mod\ } I^2 \,\big) \; $  for all  $ \, x , y \in I \big/ I^2 \, $  with  $ \, x = x' \; \big( \text{mod\ } I^2 \,\big) \, $,  $ \, y = y' \; \big( \text{mod\ } I^2 \,\big) \, $:  this makes  $ I \big/ I^2 $ into a Lie algebra, but its dual  $ \, \g = \textsl{Lie}\,(G) := {\big( I \big/ I^2 \,\big)}^* \, $  is also a Lie algebra (the tangent Lie algebra to  $ G \, $)  and the two structures are compatible, so that  $ \, \g^\star := I \big/ I^2 \, $  is a  \textsl{Lie bialgebra\/}  indeed.
 \vskip5pt
   We come now to  \textsl{quantizations\/}  of the previous co-Poisson/Poisson structures.
 \vskip7pt
   \textsl{$ \underline{\text{QUEA}} $:}\,  A  \emph{quantized universal enveloping algebra\/}  (QUEA) is a (topological) Hopf algebra  $ U_\hbar $  in  $ \vect_\kh $  such that
\begin{enumerate}  \itemsep0.2cm
	\item   $ U_\hbar $  is topologically complete with respect to the  $\hbar$--adic  topology --- or, equivalently,  $ U_\hbar $ is isomorphic, as a topological  $ \Bbbk $--module,  to  $ \hext{U_0} \, $,  where  $ \; U_0 \cong U_\hbar \big/ \hbar\,U_\hbar \; $  is seen as a discrete topological vector space;
	\item   $ U_0 \:= U_\hbar \big/ \hbar\,U_\hbar \, $  is a connected, cocommutative Hopf algebra over  $ \Bbbk $   --- or, equivalently,  $ \, U_0 \, $  is isomorphic to an enveloping algebra  $ U(\g) $	for some Lie algebra  $ \g \, $;  \,then the formula
 $ \; \displaystyle{ \delta(x) \, := \, \frac{\; \Delta\big(x'\big) - \Delta^{21}\big(x'\big) \;}{\hbar} \; \mod \hbar\,U_\hbar^{\,\widehat{\otimes}\,2} } \; $
 --- where  $ \, x' \in U_\hbar \, $  is any lift of  $ \, x \in \g \, $  ---   defines a co-Poisson structure on  $ \, U_0 = U(\g) \, $,  \,hence a Lie bialgebra structure on  $ \g \, $.
\end{enumerate}
 In this case, we say that  $ U_\hbar $  is a  \emph{quantization\/}  of the co-Poisson Hopf algebra  $ U(\g) \, $,  or (with a slight abuse of language) of the Lie bialgebra  $ \g \, $.
 \vskip5pt
   \textsl{$ \underline{\text{QFSHA}} $:}\,  A  \emph{quantized formal series Hopf algebra\/}  (QFSHA) is a (topological) Hopf algebra  $ F_\hbar $  in  $ \vect_\kh $  such that
\begin{enumerate}  \itemsep0.2cm
	\item   $ F_\hbar $  is topologically complete with respect to the  $ I_\hbar$--adic  topology, where  $ \, I_\hbar := \textsl{Ker}\big(\epsilon_{F_\hbar}\big) + \hbar\,F_\hbar \, $;
	\item   $ F_0 := F_\hbar \big/ \hbar\,F_\hbar \, $  is a commutative,  $ I $--adically  complete topological Hopf algebra over  $ \Bbbk \, $,  where  $ I $  is the augmentation ideal   --- or, equivalently,  $ \, F_0 \, $  is isomorphic to the algebra of functions of formal algebraic group  $ F[[G]] \, $;  then the formula
 $ \; \displaystyle{ \{x,y\} \, := \, \frac{\; \big[ x' , y' \big] - \big[ y' , x' \big] \;}{\hbar} \; \mod \hbar\,F_\hbar } \; $
 --- where  $ \, x' , y' \in F_\hbar \, $  are lifts of  $ \, x, y \in F[[G]] \, $  ---   defines a Poisson bracket in  $ F[[G]] \, $,  thus making  $ G $  into a (formal) Poisson group.
\end{enumerate}
 In this case, we say that  $ F_\hbar $  is a  \emph{quantization\/}  of the Poisson Hopf algebra  $ F[[G]] \, $,  or (with a slight abuse of language) of the formal Poisson group  $ G \, $.

\medskip

\subsection{Continuous Kac-Moody Lie bialgebras}  \label{subsec: cont-KM-bialg's}
 Hereafter we shall shortly recall the notion of  \emph{continuous Kac-Moody Lie bialgebras},  following  \cite{appel-sala-20}  and references therein, where the reader may find the (many) details that we shall skip.
                                                         \par
   By a  \emph{vertex space}\/  $ X $  we mean, roughly, a Hausdorff topological space locally modeled over  $ \R \, $.  Then one lifts the notion of \emph{connected interval}\/  from  $ \R $  to  $ X \, $,  in such a way that the set of all possible \emph{intervals in  $ X $},  denoted  $ \intsf(X) \, $,  is naturally endowed with two  \emph{partially defined}\/  operations: a sum  $ \oplus $,  given by concatenation of intervals, and a difference $\ominus$, given by set difference whenever the outcome is again in  $ \intsf(X) \, $.  Moreover, generalizing a standard tool for quivers, the set  $ \intsf(X) $   is equipped with
%
%
 a non-symmetric bilinear form  $ \, \abf{\,\cdot\,}{\,\cdot\,} \colon \intsf(X) \times \intsf(X) \longrightarrow \Z \, $  along with its symmetrization  $ \, \rbf{\,\cdot\,}{\,\cdot\,} \colon \intsf(X) \times \intsf(X) \longrightarrow \Z \, $   --- the  \emph{Euler form}\/  on  $ \intsf(X) \, $.
%
%
                                                  \par
   We refer to the datum  $ \; \calQ_X \, := \, \big(\, \intsf(X), \oplus,\ominus,\abf{\,\cdot\,}{\,\cdot\,}, \rbf{\,\cdot\,}{\,\cdot\,} \big) \, $  as the  \emph{continuous quiver}\/  of  $ X $.  Hereafter, we denote by  $ \fun{X} $ the  $ \Z $--span   --- inside the space of functions  $ \Bbbk^X $  ---   of the characteristic functions  $ \cf{\ia} \, $,  $ \, \ia \in \intsf(X) \, $.  \textsl{Note\/}  that in  $ \fun{X} $  the relations  $ \; \cf{\ia \oplus \ib} = \delta_{\ia \oplus \ib} (\cf{\ia} + \cf{\ib}) \; $   --- for all  $ \, \alpha , \beta \in \intsf(X) \, $  ---   hold true, where  $ \, \delta_{\ia \oplus \ib} := 1 \, $  if the sum  $ \, \ia \oplus \ib \, $  is actually defined and  $ \, \delta_{\ia \oplus \ib} := 0 \, $  otherwise.
                                                  \par
   In  \cite{appel-sala-schiffmann-18}  (where  $ \, \Bbbk := \C \, $),  for every continuous quiver  $ \calQ_X $  a Lie algebra  $ \g_X $  is constructed, called the \emph{continuous Kac-Moody algebra of  $ \, \calQ_X $},  whose definition mimics that of Kac-Moody algebras.  Namely, we first consider the Lie  $ \Bbbk $--algebra  $ \wt{\g}_X $  generated by  $ \fun{X} $  and the elements $\xpm{\ia}$, $\ia\in\intsf(X)$,  with relations
  $$  \big[ \xz{\ia} , \xz{\ib} \big] \, = \, 0 \;\; ,  \quad  \big[ \xz{\ia} , \xpm{\ib} \big] \, = \, \pm\rbf{\ia}{\ib} \xpm{\ib} \;\; ,  \quad  \big[ \xp{\ia} , \xm{\ib} \big] \, = \, \drc{\ia\ib}\xz{\ia} + \ca{\ia}{\ib} \big( \xp{\ia\ominus \ib} - \xm{\ib \ominus \ia} \big)  $$
 where  $ \, \xz{\ia} := \cf{\ia} \, $  and  $ \, \ca{\ia}{\ib} := {(-1)}^{\abf{{\ia} }{{\ib} }} \rbf{\ia}{\ib} \, $.  \textsl{Note\/}  that this implies that also all the relations
  $$  \xi_{\alpha \oplus \beta}  \; = \;  \delta_{\alpha \oplus \beta} (\xi_\alpha + \xi_\beta)   \eqno  \forall \;\; \alpha , \beta \in \intsf(X)   \qquad  $$
 hold true, just because  $ \; \cf{\ia \oplus \ib} = \delta_{\ia \oplus \ib} (\,\cf{\ia} + \cf{\ib}) \; $  in  $ \fun{X} \, $.  Then we endow  $ \wt{\g}_X $  with a suitable grading and we set  $ \, \g_X := \wt{\g}_X \big/ \r_X \, $,  \,where  $ \, \r_X \subseteq \wt{\g}_X \, $  is the sum of all two--sided graded ideals having trivial intersection with  $ \fun{X} \, $.  An explicit description of  $ \r_X $  is given in \cite{appel-sala-schiffmann-18},  thus eventually one explicitly describes  $ \g_X $  as the Lie  $ \Bbbk $--algebra  generated by the elements  $ \, \xz{\ia} , \xpm{\ia} \, $  ($ \, \ia \in \intsf(X) \, $),  \,subject to the following relations:
  $$  \displaylines{
   \hfill   \big[ \xz{\ia} , \xz{\ib} \big] \, = \, 0  \quad ,  \qquad
 \xi_{\alpha \oplus \beta}  \; = \;  \delta_{\alpha \oplus \beta} (\xi_\alpha + \xi_\beta)   \hfill \quad \qquad  \forall \;\; \ia , \ib \in \intsf{X}   \quad  \cr
   \hfill   \big[ \xz{\ia} , \xpm{\ib} \big] = \pm \rbf{\ia} {\ib} \xpm{\ib}  \; ,  \quad
   \big[ \xp{\ia} , \xm{\ib} \big] = \drc{\ia\ib} \, \xz{\ia} + \ca{\ia}{\ib} \left( \xp{\ia\ominus\ib} - \xm{\ib\ominus\ia} \right)   \hfill \quad  \forall \,\; \ia , \ib \in \intsf(X)  \cr
   \hfill   \big[ \xpm{\ia} , \xpm{\ib} \big] \, = \, \pm\,\ca{\ia}{,\, \ia\oplus\ib} \cdot \xpm{\ia\oplus\ib}   \hfill \qquad  \forall \;\; (\ia,\ib) \in \serre{X}   \quad  }  $$
 where  $ \serre{X} $  is the set of all pairs  $ \, (\ia,\ib) \in \intsf(X) \times \intsf(X) \, $  that obey some suitable conditions.  The general formulation of these conditions is quite technical, given in terms of the sum among elements of  $ \intsf(X) $,  of the Euler form  $ \, \rbf{\,\cdot\,}{\,\cdot\,} \, $, and a notion of ``partition'' of an interval  $ \, x_\ia \in \intsf(X) \, $  as (roughly speaking) an ordered decomposition of it into sum of ``smaller'' intervals.  Albeit looking quite tricky, this notion in fact has a very natural motivation; nevertheless, the technicalities reach far beyond our present scope, so we skip them (we shall not really them, indeed) referring instead to the original source  \cite{appel-sala-schiffmann-18}  for the interested reader.
                                                  \par
   Note that the  $ \Z $--span  of the  $ \xz{\ia} $'s  is a copy of  $ \fun{X} $  inside  $ \g_X \, $.
 \vskip3pt
   In addition, naturally defined  \emph{Borel subalgebras}\/  $ \b_X^+ $  and  $ \b_X^- $  exist in  $ \g_X \, $,  \,namely  $ \b_X^\pm $  is the Lie subalgebra of  $ \g_X $  generated by all the  $ \xi_\ia $'s  and the  $ \xpm{\ib} $'s  with  $ \, \ia , \ib \in \intsf(X) \, $.
 \vskip3pt
   Finally, it is also shown that  $ \g_X $  bears a (canonical) structure of quasitriangular,  \textsl{topological\/}  Lie bialgebra, whoses Lie cobracket is given on generators by
  $$  \delta(\xz{\ia}) \, := \, 0  \quad ,   \qquad  \delta(\xpm{\ia}) \; := \; \xz{\ia} \wedge \xpm{\ia} + {\textstyle \sum_{\ib \oplus \ic = \ia}} \, \ca{\ib}{,\,\ia} \, \xpm{\ib} \wedge\xpm{\ic}  $$
   --- the sum being convergent, in a natural sense.  Then both Borel subalgebras  $ \b_X^{\pm} $  are Lie sub-bialgebras, and the Euler form restricts to a non-degenerate pairing of Lie bialgebras  $ \; \rbf{\,\cdot\,}{\,\cdot\,} \colon \b_X^+ \otimes {\big( \b_X^- \big)}^{\scsop{cop}} \!\relbar\joinrel\longrightarrow \Bbbk \; $.  It follows that the canonical element  $ \, r_X \in \b_X^+ \wh{\ten} \b_X^- \, $  corresponding to  $ \rbf{\,\cdot\,}{\,\cdot\,} $  defines a  \emph{quasi-triangular structure\/}  on  $ \g_X \, $.

\medskip

\subsection{Quantization of continuous Kac-Moody Lie bialgebras}  \label{subsec: quantiz_cont-KM-bialg's}
 We now recall the construction of QUEA's that provide quantizations of the continuous Kac-Moody Lie bialgebras mentioned above: we follow again  \cite{appel-sala-20},  still referring to that source for all the details that we shall skip.
                                                         \par
   Given a continuous quiver  $ \calQ_X $  and the associated continuous Kac-Moody Lie bialgebra  $ \g_X \, $,  \,one defines also a suitable QUEA  $ \uhgx \, $:  its definition is modeled on that of  $ \g_X \, $,  but it depends on two additional partial operations on  $ \intsf(X) \, $:  the
 \emph{strict union of two non-orthogonal intervals  $ \ia $  and  $ \ib $},
 \,denoted  $ \, \iM{\ia}{\ib} \, $,  \,and the
 \emph{strict intersection of two non-orthogonal intervals  $ \ia $  and  $ \ib $},
 \,denoted  $ \, \im{\ia}{\ib} \, $  (roughly, these are slight variations of set-theoretical union and intersection, respectively).
                                                         \par
   Now, for given  $ \calQ_X $  and  $ \g_X $  as above, the  \emph{continuous quantum group of  $ X $}
is the unital,  $ \hbar $--adically  complete, associative  $ \kh $--algebra  $ \uhgx $  generated by $ \fun{X} $   --- whose spanning elements are now denoted by  $ \, \varXi_\alpha := \cf{\ia} \, $  ($ \, \alpha \in \intsf(X) \, $)  ---   and the elements  $ \qxpm{\ia} $  ($ \, \ia \in \intsf(X) \, )$,  \,obeying the following defining relations
 (for all  $ \, \ia , \ib \in \intsf(X) \, $,  with the additional constraint  $ \, (\ia,\ib) \in \serre{X} \, $  for the last relation)
%
  $$  \displaylines{
   \hfill   \varXi_\alpha \, \varXi_\beta - \varXi_\beta \, \varXi_\alpha = \, 0  \;\; ,  \;\;\;\;
 \varXi_{\alpha \oplus \beta}  = \,  \delta_{\alpha \oplus \beta} \big( \varXi_\alpha + \varXi_\beta \big)  \;\; ,  \;\;\;\;
 \varXi_\alpha \, \qxpm{\ib} - \qxpm{\ib} \, \varXi_\alpha \, = \, \pm\rbf{\ia}{\ib}\qxpm{\ib}   \hfill
%
%
  \cr
%
   \qxp{\ia} \, \qxm{\ib} - \qxm{\ib} \, \qxp{\ia}  \;\; = \;\;  \drc{\ia \ib} \frac{\qxz{\ia}{} - \qxz{\ia}{-1}}{q-q^{-1}}  \; +   \hfill  \cr
   \hfill   + \;  \ca{\ia}{\ib} \! \left( q^{\qcc{\ia}{\ib}{+}} \qxp{\ia\ominus\ib} \qxz{\ib}{\ca{\ia}{\ib}} \! - q^{\qcc{\ia}{\ib}{-}} \qxz{\ia}{\ca{\ia}{\ib}} \qxm{\ib\ominus\ia} \right)  \! +  \qcb{\ib}{\ia}{} q^{\qcb{\ib}{\ia}{}} \big( q - q^{-1} \big) \qxp{(\iM{\ia}{\ib})\ominus\ib} \qxz{\im{\ia\, }{\, \ib}}{\qcb{\ia}{\ib}{}} \, \qxm{(\iM{\ia}{\ib})\ominus{\ia}}
%
%
  \cr
%
%
   \hfill   \qxpm{\ia} \qxpm{\ib} \, - \,  q^{\qcr{\ia}{\ib}{}} \, \qxpm{\ib} \qxpm{\ia}  \; = \; \pm \qcb{\ia}{\ib}{} \, q^{\qcs{\ia}{\ib}{\pm}} \, \qxpm{\ia\oplus\ib} \, + \, \qcb{\ia}{\ib}{} \, \big( q - q^{-1} \big) \, \qxpm{\iM{\ia}{\ib}} \qxpm{\im{\ia\, }{\,\ib}}   \hfill
%
%
  }  $$
 where we set  $ \, q := \exp\big(\hbar/2\big) \, $  and  $ \, \qxz{\ia}{} := \exp\big( \hbar \, \varXi_\alpha /2 \big) \, $,  \,and we assume  $ \, \qxpm{\ia \odot \ib } := 0 \, $  whenever  $ \, \ia \odot \ib \, $  is not defined, for  $ \, \odot = \oplus , \ominus , \iM{}{}, \im{}{} \, $;  moreover, the various coefficients are defined as follows:
  $$  \displaylines{
   \ca{\ia}{\ib} \, := \, {(-1)}^{\abf{\ia}{\ib}}\rbf{\ia}{\ib}  \;\; ,  \;\quad
   \qcc{\ia}{\ib}{+} \, := \, 2^{-1} \left(\,\ca{\ib}{,\,\ia\ominus\ib}-1\right)  \;\; ,  \;\quad
   \qcc{\ia}{\ib}{-} \, := \, 2^{-1} \left(\,\ca{\ib\ominus\ia}{,\,\ia}+1\right)  \cr
   \qcb{\ia}{\ib}{} \, := \, \ca{\ia}{,\, \iM{\ia}{\ib}}  \;\; ,  \;\quad
   \qcr{\ia}{\ib}{} \, := \, (\,1-\drc{\ia\ib}) {(-1)}^{\abf{\ia}{\ib}}\rbf{\ia}{\ib}^2  \;\; ,  \;\quad
   \qcs{\ia}{\ib}{\pm} \, := \, 2^{-1} \left(\,\ca{\ib}{,\,\ia\oplus\ib}\pm1\right)  }  $$
   \indent   In addition, one also considers the  $ \kh $--subalgebras  $ \uhbxpm $  in  $ \uhgx $  generated by  $ \fun{X} $  and the  $ \qxpm{\ia} $'s  ($ \, \ia \in \intsf(X) \, $).  Moreover, both have an obvious presentation by generators and relations similar to that of  $ \uhgx $  (roughly speaking, one just has to pick only the relevant generators and the relations involving them).
 \vskip9pt
   The result concerning the Hopf algebra structure of  $ U_\hbar(\g_X) $  is the following:

\vskip11pt

\begin{theorem}[{cf.\ \cite[Theorem 5.11]{appel-sala-20}}]  \label{thm: Hopf-str_Uhgx}
 Let  $ \calQ_X $  be a continuous quiver and  $ U_\hbar(\g_X) $  the corresponding continuous quantum group.
 \vskip3pt
   (1)\,  The algebra  $ U_\hbar(\g_X) $  is a topological Hopf algebra with respect to the antipode and coproduct defined on the generators by
  $$  \displaylines{
   \epsilon\big(\varXi_\alpha\big) \, := \, 0 \, =: \, \epsilon\big(X_\alpha^\pm\big)  \quad ,   \qquad \qquad
\Delta(\xz{\ia})\ := \varXi_\alpha \ten 1 + 1 \ten \varXi_\alpha  \cr
   \Delta\big(\qxp{\ia}\big)  \, := \,  \qxp{\ia} \otimes 1 + K_\ia^{+1} \otimes \qxp{\ia} \, + \, {\textstyle \sum_{\ia = \ib \oplus \ic}} \, \ca{\ib}{, \,\ia} \, \qcs{\ic}{\ib}{+} \cdot \big(\, q - q^{-1} \big) \, K_\ic^{+1} \qxp{\ib} \otimes \qxp{\ic}  \cr
   \Delta\big(\qxm{\ia}\big)  \, := \,  1 \otimes \qxm{\ia} + \qxm{\ia} \otimes K_\ia^{-1} \, - \, {\textstyle \sum_{\ia = \ib \oplus \ic}} \, \ca{\ib}{, \,\ia} \, \qcs{\ic}{\ib}{-} \cdot \big(\, q - q^{-1} \big) \, \qxm{\ib} \otimes \qxm{\ic} K_\ib^{-1}  }  $$
%
%
 --- where  $ \Delta $  takes values in the completion of the algebraic tensor product with respect to the ``weak \&  $ \hbar $--adic  topology'' ---   while the antipode is given, as usual, by the formula
 $ \; S := \sum_n m^{(n)} \circ {(\id - \iota \circ \epsilon)}^{\otimes n} \circ \Delta^{(n)} \; $
 where  $ m^{(n)} $  and  $ \Delta^{(n)} $  denote the  $ n $--th  iterated product and coproduct.
 \vskip3pt
   (2)\,  $ \uhbxp \, $,  resp.\  $ \uhbxm \, $,  \,is a Hopf  $ \kh $--subalgebra  of  $ \, \uhgx \, $.
 \vskip3pt
   (3)\,  There exists a unique non-degenerate Hopf pairing
  $$  \rbf{\,\cdot\,}{\,\cdot\,} \colon \uhbxp \otimes {\big( \uhbxm \big)}^{\scsop{cop}} \!\relbar\joinrel\relbar\joinrel\longrightarrow \Bbbk((\hbar))  $$
 defined on the generators by the formulas
%
%
%
%
  $$  \rbf{1}{1} \, := \, 1  \quad ,  \qquad  \rbf{\,\varXi_\alpha\,}{\,\varXi_\beta} \, := \frac{\; \rbf{\ia}{\ib} \;}{\;\hbar\;}  \quad ,  \qquad  \big(\, \qxp{\ia} \,\big|\, \qxp{\ib} \,\big) \, := \frac{\delta_{\ia\ib}}{\; q - q^{-1} \;}  $$
 and zero otherwise.  In particular, one has  $ \, \big(\, \qxz{\ia} \;\big|\, \qxz{\ib} \,\big) = q^{\rbf{\ia}{\ib}} \, $.
%
 \vskip3pt
   (4)\,  Through the Hopf pairing  $ \rbf{\,\cdot\,}{\,\cdot\,} \, $,  the Hopf algebras  $ \, \uhbxp \, $  and  $ \, \uhbxm \, $  give rise to a match pair of Hopf algebras; via this,  $ U_\hbar(\g_X) $  is realized as a quotient of the double cross product Hopf algebra  $ \; \uhbxp \dcs\, \uhbxm \, $  obtained by identifying the two copies of the commutative subalgebra  $ \fun{X} \, $.  In particular,  $ U_\hbar(\g_X) $  is a topological quasi-triangular Hopf algebra.
 \vskip3pt
   (5)\,  The topological quasi-triangular Hopf algebra  $ \uhgx $  is a quantization of the topological quasi-triangular Lie bialgebra  $ \, \g_X \, $,  \,through the (unique) isomorphism  $ \; \uhgx \Big/ \hbar\,\uhgx \,\cong \, U(\g_X) \; $
%
%
 given    $ \; \varXi_\ia \,\mapsto\, \xi_\ia \; $  and  $ \; X_\ia^\pm \,\mapsto\, x_\ia^\pm \; $  for all  $ \, \ia \in \intsf(X) \, $.
                                                           \par
   Similarly,  $ \uhbxpm $  is a quantization of the Lie bialgebra  $ \b_X^\pm \, $.
\end{theorem}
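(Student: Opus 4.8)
The plan is to transport the classical Kac--Moody strategy to the quantum continuous setting, relying throughout on a triangular decomposition of $\uhgx$ into its negative, Cartan and positive parts together with a PBW-type spanning set indexed by ordered monomials in the $\qxpm{\ia}$'s and the $\varXi_\ia$'s. Since $\uhgx$, $\uhbxp$ and $\uhbxm$ are all given by generators and relations, the first task in part~(1) is to check that the prescribed values of $\Delta$, $\epsilon$ and $S$ on generators extend to the free algebra and then respect every defining relation; this is precisely what is needed for $\Delta$ and $\epsilon$ to descend to $\uhgx$. The counit case is immediate, as each relation is homogeneous for the natural grading and $\epsilon$ annihilates all $\qxpm{\ia}$ and $\varXi_\ia$. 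Coassociativity and the counit axioms then reduce to a check on generators, and the antipode is \emph{forced}: once $\uhgx$ is seen to be a filtered bialgebra whose associated graded is connected, the formula $S = \sum_n m^{(n)} \circ (\id - \iota\circ\epsilon)^{\otimes n} \circ \Delta^{(n)}$ converges $\hbar$--adically and automatically satisfies the antipode axioms. Thus the genuine content of~(1) is the compatibility of $\Delta$ with the relations.

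I expect this compatibility to be the main obstacle, concentrated in the two most intricate relations: the mixed commutator between $\qxp{\ia}$ and $\qxm{\ib}$ (carrying the $\drc{\ia\ib}(\qxz{\ia}{} - \qxz{\ia}{-1})/(q-q^{-1})$ term together with its strict-union and strict-intersection corrections $\iM{\ia}{\ib}$, $\im{\ia}{\ib}$) and the quantum Serre-type relation $\qxpm{\ia}\qxpm{\ib} - q^{\qcr{\ia}{\ib}{}} \qxpm{\ib}\qxpm{\ia} = \cdots$. For each one must apply $\Delta$ to both sides, expand the resulting sums $\sum_{\ia = \ib\oplus\ic}$, and verify that the coefficients $\qcc{\ia}{\ib}{\pm}$, $\qcb{\ia}{\ib}{}$, $\qcr{\ia}{\ib}{}$, $\qcs{\ia}{\ib}{\pm}$ conspire so that the two expansions coincide. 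The needed identities amount to bookkeeping over partitions of intervals in $\intsf(X)$ subject to the $\serre{X}$ conditions, and one must simultaneously check that these sums converge in the completed tensor product (for the weak and $\hbar$--adic topology), so that $\Delta$ genuinely lands there.

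Part~(2) is then essentially free: the explicit formulas show that $\Delta$, $\epsilon$ and $S$ applied to a generator of $\uhbxpm$ stay within $\uhbxpm \wh{\ten} \uhbxpm$, so each Borel is a Hopf subalgebra. For part~(3) I would define the pairing by its stated values on generators and extend it using the Hopf-pairing axioms $\rbf{ab}{c} = \rbf{a\ten b}{\Delta c}$ and $\rbf{a}{bc} = \rbf{\Delta a}{c\ten b}$; consistency again reduces to compatibility with the Borel relations. Non-degeneracy is the delicate point: one grades both Borels by the monoid generated by $\intsf(X)$, observes that the pairing is graded and that at the Cartan level $\rbf{\qxz{\ia}{}}{\qxz{\ib}{}} = q^{\rbf{\ia}{\ib}}$, and then argues degree by degree, using the triangular/PBW structure and the non-degeneracy of the Euler form $\rbf{\,\cdot\,}{\,\cdot\,}$ on $\intsf(X)$ to propagate non-degeneracy to the higher graded pieces.

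Finally, part~(4) follows from the general Drinfeld-double formalism: the non-degenerate Hopf pairing of~(3) makes $(\uhbxp, \uhbxm)$ a matched pair, producing the double cross product $\uhbxp \dcs \uhbxm$, and the mixed commutation relations of $\uhgx$ are exactly those imposed by the pairing, so identifying the two copies of $\fun{X}$ yields $\uhgx$ as a quotient, with quasi-triangular structure given by the canonical element dual to the pairing. For part~(5) I would specialize at $\hbar = 0$, i.e.\ $q = 1$: then $\qxz{\ia}{} \to 1$, the term $\drc{\ia\ib}(\qxz{\ia}{} - \qxz{\ia}{-1})/(q-q^{-1})$ tends to $\drc{\ia\ib}\varXi_\ia$, the strict-union corrections vanish because they carry a factor $(q-q^{-1})$, and every $q$-power becomes $1$, so the presentation of $\uhgx$ collapses precisely to that of $U(\g_X)$; flatness (again via the PBW spanning set) guarantees that the induced map $\uhgx\big/\hbar\,\uhgx \to U(\g_X)$ is an isomorphism, and computing the co-Poisson cobracket from $\Delta$ via the QUEA formula reproduces the prescribed $\delta$ on generators, so $\uhgx$ (resp.\ $\uhbxpm$) is a quantization of $\g_X$ (resp.\ $\b_X^\pm$).
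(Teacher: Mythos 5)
First, a point of comparison you could not have known: this theorem is not proved in the present paper at all --- it is recalled verbatim (up to a corrected misprint in $\Delta\big(\qxm{\ia}\big)$) from Appel--Sala, \cite[Theorem 5.11]{appel-sala-20}, so the only meaningful benchmark is the proof in that source. Your outline matches its architecture for items (2)--(5) --- Hopf structure on the Borel halves, Hopf pairing, matched pair and double cross product, specialization at $\hbar = 0$ --- but with one notable inversion of logic in part (1): in the source, the compatibility of $\Delta$ with the mixed relation between $\qxp{\ia}$ and $\qxm{\ib}$ is not verified by hand; instead $\uhgx$ is \emph{realized} as a quotient of $\uhbxp \dcs\, \uhbxm$, so that the mixed relations, and the bialgebra axioms across them, come for free from the double construction. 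Your plan to check that relation directly is not wrong in principle, but you dismiss as ``bookkeeping'' precisely the computation (coefficients $\qcc{\ia}{\ib}{\pm}$, $\qcb{\ia}{\ib}{}$, strict unions and intersections, sums over decompositions $\ia = \ib \oplus \ic$) that the double construction is designed to avoid, and you give no indication of how the identities among those coefficients would actually close.

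The genuine gap is in part (3), non-degeneracy of the pairing. Your proposed argument --- grade both Borels, observe the pairing is graded, and ``propagate non-degeneracy degree by degree using the non-degeneracy of the Euler form'' --- does not work as stated: non-degeneracy of $\rbf{\,\cdot\,}{\,\cdot\,}$ on the Cartan part says nothing about the higher graded pieces, which are spanned by monomials in the $\qxpm{\ia}$'s subject to the Serre-type relations. What is actually needed is the statement that the radical of the pairing on the free algebra over the $\qxp{\ia}$'s is \emph{exactly} the ideal generated by the listed relations with $(\ia,\ib) \in \serre{X}$; equivalently, that the presented Borel half embeds into a shuffle-type model. Already for ordinary Kac--Moody algebras this is a substantive theorem (Lusztig), and in the continuum setting it is the technical heart of \cite{appel-sala-20}, resting on the continuum shuffle algebras of Sala--Schiffmann \cite{sala-schiffmann-17}. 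Without this input your induction has no mechanism. A related soft spot appears in part (5): a PBW-type \emph{spanning} set only gives surjectivity of the natural map $U(\g_X) \longrightarrow \uhgx \big/ \hbar\,\uhgx$; injectivity --- i.e., that no extra relations collapse in at $\hbar = 0$, together with topological freeness of $\uhgx$ over $\kh$ --- requires a genuine PBW/flatness theorem, which is again supplied in the cited source and cannot be obtained from the presentation alone.
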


\vskip9pt

\begin{rem}
 The original formulas in  \cite{appel-sala-20}  for  $ \Delta\big(X^\pm\big) $  were
  $$  \displaylines{
   \Delta\big(\qxp{\ia}\big)  \, := \,  \qxp{\ia} \otimes 1 + K_\ia \otimes \qxp{\ia} \, + \, {\textstyle \sum_{\ia = \ib \oplus \ic}} \, \ca{\ic}{, \,\ia} \, \qcs{\ib}{\ic}{-} \cdot q^{-1} \big(\, q - q^{-1} \big) \, \qxp{\ib} K_\ic \otimes \qxp{\ic}  \cr
   \Delta\big(\qxm{\ia}\big)  \, := \,  1 \otimes \qxm{\ia} + \qxm{\ia} \otimes K_\ia^{-1} \, - \, {\textstyle \sum_{\ia = \ib \oplus \ic}} \, \ca{\ic}{, \,\ia} \, \qcs{\ib}{\ic}{-} \cdot \big(\, q - q^{-1} \big) \, \qxm{\ib} \otimes \qxm{\ic} K_\ic^{-1}  }  $$
 where  \textsl{there is a misprint in second line\/}  (cf.\ \cite{appel-21}),  namely  $ \, \qxm{\ib} \otimes \qxm{\ic} K_\ic^{-1} \, $  should read instead  $ \, \qxm{\ic} \otimes \qxm{\ib} K_\ic^{-1} \, $.  Further re-writings are possible, giving for instance
  $$  \displaylines{
   \Delta\big(\qxp{\ia}\big)  \, := \,  \qxp{\ia} \otimes 1 + K_\ia^{+1} \otimes \qxp{\ia} \, + \, {\textstyle \sum_{\ia = \ib \oplus \ic}} \, \ca{\ib}{, \,\ia} \, \qcs{\ic}{\ib}{+} \cdot \big(\, q - q^{-1} \big) \, K_\ic^{+1} \qxp{\ib} \otimes \qxp{\ic}  \cr
   \Delta\big(\qxm{\ia}\big)  \, := \,  1 \otimes \qxm{\ia} + \qxm{\ia} \otimes K_\ia^{-1} \, + \, {\textstyle \sum_{\ia = \ib \oplus \ic}} \, \ca{\ib}{, \,\ia} \, \qcs{\ib}{\ic}{-} \cdot \big(\, q - q^{-1} \big) \, \qxm{\ic} \otimes \qxm{\ib} K_\ic^{-1}  \cr
   \text{or}   \hfill  \cr
   \Delta\big(\qxp{\ia}\big)  \, := \,  \qxp{\ia} \otimes 1 + K_\ia^{+1} \otimes \qxp{\ia} \, + \, {\textstyle \sum_{\ia = \ib \oplus \ic}} \, \ca{\ib}{, \,\ia} \, \qcs{\ic}{\ib}{+} \cdot \big(\, q - q^{-1} \big) \, K_\ic^{+1} \qxp{\ib} \otimes \qxp{\ic}  \cr
   \Delta\big(\qxm{\ia}\big)  \, := \,  1 \otimes \qxm{\ia} + \qxm{\ia} \otimes K_\ia^{-1} \, + \, {\textstyle \sum_{\ia = \ib \oplus \ic}} \, \ca{\ic}{, \,\ia} \, \qcs{\ic}{\ib}{-} \cdot \big(\, q - q^{-1} \big) \, \qxm{\ib} \otimes \qxm{\ic} K_\ib^{-1}  }  $$
\end{rem}

\bigskip

\section{QDP for formal quantum continuous KM-algebras}  \label{sec: QDP-formal}
 We are now ready to present the Quantum Duality Principle (=QDP) for the QUEA associated with  $ \g_X \, $,  namely  $ \uhgx \, $.  Indeed, we cannot get it as a direct application of the general result in  \cite{gavarini-02},  because  $ \g_X $  is infinite-dimensional.  However, from the general result in  [\emph{loc.\ cit.}]  we can still cook up a suitable formulation of the QDP expressly tailored as to match the case of  $ \uhgx \, $.
                                                             \par
   We begin here with a formulation of the QDP in the ``formal'' version   --- i.e., for quantizations ``\,\`a la Drinfeld'' such as the formal  $ \uhgx $  introduced in  \S \ref{subsec: quantiz_cont-KM-bialg's}  above.
 However, as our construction is somewhat indirect, and as such it might seem to come out of the blue, we first start by explaining the very motivation that guides our construction, shedding light on some key technical steps in the general setup.

\medskip

\subsection{Drinfeld's functor  $ \, \uhg \mapsto {\uhg}' \, $  and its description}
 We deal now with a QUEA  $ \uhg $  over a Lie  $ \Bbbk $--bialgebra  $ \g $   --- cf.\  \S \ref{subsec: quant_Lie-bialg_&_P-groups}.  The QDP as originally devised by Drinfeld applies to  $ \g $  of  \textsl{finite dimension\/}:  indeed, it is proved in  \cite{gavarini-02}  that most steps leading to the main result (the proof of the QDP in its full extent) actually still hold true in greater generality as well, but the whole result as such does not.  However, one can carefully modifies some assumptions   --- e.g., the nature of the (quantum) Hopf algebras we are dealing with, in particular their topology ---   so to finally achieve a suitably modified version of the QDP and some related byproducts.  Hereafter, we revisit in detail some steps of the analysis carried on in \cite{gavarini-02},  so that we will be later able to adapt them to the QUEA's  $ \uhgx $  on the infinite-dimensional (topological) Lie bialgebras  $ \g_X \, $.

\medskip

\begin{definition}  \label{def: Drinfeld's functors}
 \textit{(Drinfeld's functors)}  We define Drinfeld's functors on QUEA's and QFSHA's (only on objects --- we do not need them on morphisms) as follows:
 \vskip3pt
   \textit{(a)}\,  Let  $ \uhg $  be any QUEA  (cf.\  \S \ref{subsec: quant_Lie-bialg_&_P-groups}),
%
%
 and assume for simplicity that  $ \g $  be finite-dimensional.  Let  $ \, \iota : \kh \relbar\joinrel\longrightarrow \uhg \, $  and  $ \, \epsilon : \uhg \relbar\joinrel\longrightarrow \kh \, $  be its unit and counit maps; moreover, for every  $ \, n \in \N \, $  set  $ \, \delta_n := (\id - \iota \circ \epsilon) \circ \Delta^{(n)} \, $.  Then we define
 \vskip5pt
   \hfill   $ {\uhg}'  \; := \;  \Big\{\, \eta \in \uhg \,\Big|\; \delta_n(\eta) \in \hbar^n\,{\uhg}^{\otimes n} \;\; \forall \; n \in \N \,\Big\} $   \hfill {\ }
 \vskip3pt
   \textit{(b)}\,  Let  $ F_\hbar[[G]] $  be any QFSHA  (cf.\  \S \ref{subsec: quant_Lie-bialg_&_P-groups}),
%
%
 and assume for simplicity that  $ G $  be finite-dimensional.  Let  $ \, \epsilon_{\scriptscriptstyle F} : F_\hbar[[G]] \relbar\joinrel\longrightarrow \kh \, $  be its counit map, and consider also  $ \, I_{F_\hbar} := \hbar\,F_\hbar[[G]] + \textsl{Ker\/}(\epsilon_{\scriptscriptstyle F}) \, $.  Then we define
 \vskip5pt
   \qquad \qquad \hfill   $ {F_\hbar[[G]]}^\vee  \; := \;  \hbar $--adic  completion of  $ \, \sum_{n \geq 0} \hbar^{-n} I_{F_\hbar}^{\;{}^{\scriptstyle n}} $   \hfill
\end{definition}

\vskip11pt

   When  $ \g $  is finite-dimensional, one proves  (cf.\  \cite{gavarini-02})  that  $ {\uhg}' $  is a QFSHA quantizing  $ G^* \, $,  \,the formal Poisson group dual to  $ \g \, $  (see also  Remark \ref{rem: inf-dim case}  below for possible generalizations).  Moreover, the following, alternative description is possible (it was mentioned in \cite[\S 3.5]{gavarini-02},  but in a somewhat confused manner):

\vskip13pt

\begin{proposition}  \label{prop: exist-lifts-x_i}
 Given a\/  $ \Bbbk $--basis  $ {\{ \overline{y}_i \}}_{i \in I} $  of\/  $ \g \, $,  \,there exist  $ \, y_i \in \uhg \, $  such that:
 \vskip3pt
   (a)\;  $ \, \epsilon(y_i) = 0 \, $,  $ \, \big(\, y_i \!\mod \hbar\,\uhg \big) = \overline{y}_i \, $  and  $ \; y'_i := \hbar\,y_i \, \in \, {\uhg}' \, $  for all  $ \, i \in I \, $;
 \vskip3pt
   (b)\;  $ \, {\uhg}' $  is the completion of the unital\/  $ \kh $--subalgebra  of  $ \, \uhg $  generated by all the  $ x'_i $'s  with respect to its  $ I'_\hbar $--adic  topology, where  $ I'_\hbar $  is the ideal (in that subalgebra) generated by  $ \hbar $  and all the  $ x'_i $'s,  so that
  $ \; {\uhg}' = \Bbbk\big[\big[ {\{ x'_i \}}_{i \in I} \cup \{\hbar\} \big]\big] \; $.
%

%
\end{proposition}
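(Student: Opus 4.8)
The plan is to construct the desired lifts $y_i$ explicitly and then identify the algebra they generate with ${\uhg}'$. First I would recall the defining property of ${\uhg}'$, namely that $\eta \in {\uhg}'$ iff $\delta_n(\eta) \in \hbar^n\,{\uhg}^{\otimes n}$ for all $n$, where $\delta_n = (\id - \iota\circ\epsilon)\circ\Delta^{(n)}$. The key elementary observation is that for $\eta \in \uhg$ with $\epsilon(\eta)=0$ one has $\delta_1(\eta)=\eta$, while $\delta_n(\eta) \equiv \delta(\bar\eta) \pmod{\hbar}$ roughly measures the co-Poisson structure; more precisely, if $\bar\eta \in \g$ then $\delta_n(\eta) = O(\hbar)$ because primitivity of $\bar\eta$ in $U(\g)$ forces all higher $\delta_n(\eta)$ to vanish modulo $\hbar$. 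This is the standard mechanism by which $\hbar\cdot(\text{a lift of a primitive}) \in {\uhg}'$.

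For part \textit{(a)}, I would proceed as follows. Given the basis $\{\overline{y}_i\}_{i\in I}$ of $\g$, choose for each $i$ an arbitrary lift $\widetilde{y}_i \in \uhg$ reducing to $\overline{y}_i$ modulo $\hbar\,\uhg$; then set $y_i := \widetilde{y}_i - \iota(\epsilon(\widetilde{y}_i))$ so that $\epsilon(y_i)=0$ while $\big(y_i \bmod \hbar\,\uhg\big)=\overline{y}_i$ is preserved (since $\epsilon$ kills the augmentation ideal modulo $\hbar$, this correction is $O(\hbar)$ and does not change the class in $\g$). It remains to check $y'_i := \hbar\,y_i \in {\uhg}'$, i.e.\ that $\delta_n(\hbar\,y_i) \in \hbar^n\,{\uhg}^{\otimes n}$ for all $n$. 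Since $\delta_n$ is $\kh$-linear, $\delta_n(\hbar\,y_i)=\hbar\,\delta_n(y_i)$, so it suffices to prove $\delta_n(y_i)\in \hbar^{n-1}\,{\uhg}^{\otimes n}$. For $n=1$ this is immediate ($\delta_1(y_i)=y_i \in \hbar^0\,\uhg$); for $n\geq 2$ the crucial input is that $\overline{y}_i$ is primitive in $U(\g)=U_0$, so that $\delta_n(y_i) \bmod \hbar = \delta_n^{U(\g)}(\overline{y}_i) = 0$ for $n\geq 2$ (the composite with $\id-\iota\circ\epsilon$ annihilates the grouplike and the single primitive part), whence $\delta_n(y_i)\in\hbar\,{\uhg}^{\otimes n}$; iterating the co-Poisson filtration argument from \cite{gavarini-02} upgrades this to the full $\hbar^{n-1}$ bound.

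For part \textit{(b)}, I would show that the $I'_\hbar$-adic completion of the subalgebra $A'$ generated by $\{x'_i\}_{i\in I}$ equals ${\uhg}'$. One inclusion, $A'\subseteq {\uhg}'$, follows from part \textit{(a)} together with the fact that ${\uhg}'$ is a $\kh$-subalgebra (so closed under products of the $x'_i$) and is $\hbar$-adically, hence $I'_\hbar$-adically, complete. For the reverse inclusion I would invoke the structure result of \cite{gavarini-02}: ${\uhg}'$ is a QFSHA quantizing $F[[G^*]]$, and its cotangent space is spanned precisely by the classes of the $x'_i/\hbar$; the $x'_i$ therefore furnish a complete system of topological generators, giving the presentation ${\uhg}'=\Bbbk\big[\big[\{x'_i\}_{i\in I}\cup\{\hbar\}\big]\big]$ as a formal power series algebra. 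The main obstacle is this reverse inclusion: one must argue that every element of ${\uhg}'$ lies in the completed subalgebra, which amounts to a completeness-plus-generation statement. The cleanest route is to use that ${\uhg}'$ is topologically free over $\kh$ with $I'_\hbar/({I'_\hbar})^2$ dual to $\g$, so that a set of lifts of a basis of that cotangent space generates a dense subalgebra; I would therefore reduce \textit{(b)} to the identification of the cotangent space of the QFSHA ${\uhg}'$ with $\g^*$, which is exactly the content of the QDP established in \cite{gavarini-02} and thus may be assumed.
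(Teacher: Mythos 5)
Your part \textit{(a)} contains a genuine gap, at the step where you pass from ``$\delta_n(y_i)\in\hbar\,\uhg^{\ten n}$ for $n\geq 2$'' (which is correct, by primitivity of $\overline{y}_i$ modulo $\hbar$) to the full bound $\delta_n(y_i)\in\hbar^{n-1}\,\uhg^{\ten n}$ by ``iterating the co-Poisson filtration argument''. No such iteration exists, because the stronger bound is simply \emph{false} for an arbitrary counit-normalized lift: take, e.g., $\uhg = U(\g)[[\hbar]]$ with undeformed Hopf structure and the lift $y := \overline{y} + \hbar\,abc$ with $a,b,c\in\g$ and $\overline{y}$ a basis vector; then $\delta_3(y) = \hbar\,\delta_3(abc) = \hbar \sum_{\sigma\in S_3}\sigma\!\cdot\!(a\ten b\ten c)$, which lies in $\hbar\,\uhg^{\ten 3}$ but \emph{not} in $\hbar^2\,\uhg^{\ten 3}$, so $\hbar\,y\notin{\uhg}'$ even though $y$ is a counit-normalized lift of the primitive $\overline{y}$. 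This is exactly the pitfall flagged in Remark \ref{rem: generation-ughx'-by-x'_i}: membership of $\hbar\,y_i$ in ${\uhg}'$ depends heavily on the choice of the lift (counterexamples exist already for $\g=\sl_2$), so claim \textit{(a)} is an existence statement that cannot be verified lift-by-lift from primitivity alone, which is what your construction attempts.

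The paper's proof runs in the opposite direction: the good lifts are not chosen in $\uhg$ and then tested, but extracted from the structure theory of ${\uhg}'$ itself. By \cite[Proposition 3.6 and \S 3.1]{gavarini-02}, $\calF_\hbar := {\uhg}'$ is a QFSHA of the form $\Bbbk\big[\big[\{x_i\}_{i\in\calI}\cup\{\hbar\}\big]\big]$; by \cite[Proposition 3.2]{gavarini-02} the dual $\calF_\hbar^{\,\vee}$ is the $\hbar$--adic completion of the algebra generated by the $\check{x}_i := \hbar^{-1}x_i$, with semiclassical limit $U(\mathfrak{k}^*)$; and the decisive input is $\big({\uhg}'\big)^\vee = \uhg$ from \cite[Proposition 2.2]{gavarini-02}, which forces $\mathfrak{k}^* = \g$ and identifies the cosets of the $\check{x}_i$ with a $\Bbbk$--basis of $\g$ --- so the good lifts are $y_i := \check{x}_i$, with $y'_i = x_i$ the topological generators, yielding \textit{(a)} and \textit{(b)} simultaneously. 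Your part \textit{(b)} invokes essentially these same structure results, so in spirit it matches the paper; but as written it rests on your broken \textit{(a)} for the inclusion of the generated subalgebra into ${\uhg}'$, and the claim that the classes of \emph{your} $x'_i$ span the cotangent space of ${\uhg}'$ is precisely what fails for bad lifts. (The separate fact that \emph{once} lifts are known to lie in $\textsl{Ker}(\epsilon)\cap{\uhg}'$ they automatically generate is true, but it is the content of Lemma \ref{lemma: generation-ughx'-by-x'_i}, proved by the degree-bound argument via \cite[Lemma 3.3]{gavarini-02}, not by a density appeal. Finally, note that the paper's method produces lifts adapted to one particular basis; an arbitrarily given basis $\{\overline{y}_i\}_{i\in I}$ is then reached by a $\Bbbk$--linear change of basis, which preserves all the required properties.)
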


\begin{proof}
 By  \cite[Proposition 3.6]{gavarini-02},  $ \, \calF_\hbar := {\uhg}' \, $  is a QFSHA.  Moreover, from  \cite[\S 3.1]{gavarini-02}  we have  $ \, \calF_\hbar = \Bbbk\big[\big[ {\{ x_i \}}_{i \in \calI} \cup \{\hbar\} \big]\big] \, $  for some  $ \, x_i \in \textsl{Ker\/}\big(\,\epsilon_{\calF_\hbar}\big) \, $   --- for each  $ \, i \in \calI \, $,   the latter being an index set ---   such that their cosets  $ \, \big(\, (\, x_i \!\mod \hbar\,\calF_\hbar \,) \!\mod J_\calF^{\,2} \,\big) \, $  form a  $ \Bbbk $--basis  of  $ \; J_\calF \big/ J_\calF^{\,2} \, \cong \, \g \, $,  \,where  $ J_\calF $  is the kernel of the counit of  $ \, \calF_\hbar \Big/ \hbar\,\calF_\hbar \; $.  In addition, by  \cite[Proposition 3.2]{gavarini-02}  we have that  $ \calF_\hbar^{\,\vee} $  is a QUEA, and we have an explicit description of it: namely, as a  $ \kh $--module  it is  $ \; \calF_\hbar^{\,\vee} = \big(\, \Bbbk\big[ {\{ \check{x}_i \}}_{i \in \calI} \big] \big)[[\hbar]] \, $,  \,i.e.\ it is the  $ \hbar $--adic  completion of the  $ \kh $--subalgebra  of  $ \, \Bbbk(\!(\hbar)\!) \otimes_\kh \calF_\hbar \, $  generated by the elements  $ \, \check{x}_i := \hbar^{-1} x_i \, $,  $ \, i \in \calI \, $.  Finally, its semiclassical limit is  $ \; \calF_\hbar^{\,\vee} \!\Big/\, \hbar\,\calF_\hbar^{\,\vee} \, = \, U(\mathfrak{k}^*) \; $,  \,with the cosets $ \, \check{x}_i \, \big(\, \text{mod} \; \hbar\,\calF_\hbar^{\,\vee} \,\big) \, $   ---  $ \, i \in \calI \, $  ---   forming a  $ \Bbbk $--basis  of  $ \mathfrak{k}^* \, $,  where  $ \mathfrak{k} $  is the tangent Lie bialgebra to the formal Poisson group $ K $  given by the semiclassical limit of  $ \calF_\hbar \, $,  i.e.\ such that  $ \, \calF_\hbar \Big/ \hbar\,\calF_\hbar= F[[K]] \, $.
 \vskip3pt
  Now, from  \cite[Proposition 2.2]{gavarini-02}  we have  $ \, {\big( {\uhg}' \,\big)}^\vee = \uhg \, $,  \,hence in particular  $ \, \calF_\hbar^{\,\vee} = \uhg \, $;  \,then the above gives  $ \; {\uhg}' = \calF_\hbar = \Bbbk\big[\big[ {\{ x_i \}}_{i \in \calI} \cup \{\hbar\} \big]\big] \; $  and  $ \, \mathfrak{k}^* = \g \, $,  \,whence we get the claim with  $ \, \overline{y}_i = \check{x}_i \, $  and  $ \, y_i = x_i \, $  for all  $ \, i \in I \, $.
\end{proof}

\vskip9pt

\begin{rem}  \label{rem: generation-ughx'-by-x'_i}
 It is worth stressing here a key point.  Proposition \ref{prop: exist-lifts-x_i}  above ensures that  $ {\uhg}' $  can be described as an algebra of ``formal series'' in the elements  $ \, y'_i = \hbar\,y_i \, $, which are re-scaling of suitable lifts in  $ \textsl{Ker\/}\big(\uhg\big) $  of elements  $ \overline{y}_i $  in a  $ \Bbbk $--basis  of  $ \g \, \big( \subseteq U(\g) \big) \, $.  However, we must point out that  \textsl{this heavily depends on the choice of these lifts},  in particular it is definitely  \textsl{false\/}  that the same might hold true with  \textsl{any\/}  lifts of the  $ \overline{y}_i $'s   --- easy counterexamples exist, already for  $ \, \g = \mathfrak{sl}_2 \, $  and its standard Drinfeld's quantization.
                                                            \par
   On the other, choosing inside  $ \, \textsl{Ker\/}\big(\uhg\big) \cap {\uhg}' \, $  any lift  $ {\{\,y_i\,\}}_{i \in \calI} $  of a  $ \Bbbk $--basis  of  $ \g $  as in claim  \textit{(a)\/}  of  Proposition \ref{prop: exist-lifts-x_i}  is indeed enough to guarantee that claim  \textit{(b)\/}  holds true as well; this is the content of next result, which we will apply later on.
\end{rem}

\vskip9pt

\begin{lemma}  \label{lemma: generation-ughx'-by-x'_i}
 Let  $ \, x'_i \in {\uhg}' \, $   --- for all  $ \, i \in I \, $  ---   have the following properties:  $ \, x'_i = \hbar\,x_i \, $  for some  $ \, x_i \in \uhg \, $  such that  $ \, \epsilon(x_i) = 0 \, $  (for all  $ \, i \in I \, $)  and the set  $ \, {\big\{\, \overline{x}_i := \big(\, x_i \!\!\mod \hbar\,\uhg \big) \big\}}_{i \in \calI} \, $  is a\/  $ \Bbbk $--basis  of\/  $ \g \, $.  Then  $ {\uhg}' $  is the  $ I''_\hbar $--adic  completion of the unital\/  $ \kh $--subalgebra  $ {\uhg}'' $  of  $ \, \uhg $  generated by all the  $ x'_i $'s,  where  $ \, I''_\hbar := \hbar\,{\uhg}'' + \textsl{Ker}\,\big(\,{\uhg}''\,\big) \, $.  In a nutshell,
  $ \; {\uhg}' = \Bbbk\big[\big[ {\{ x'_i \}}_{i \in I} \cup \{\hbar\} \big]\big] \; $.
%
\end{lemma}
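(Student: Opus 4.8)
The plan is to compare the arbitrary generators $x'_i$ with the distinguished ones furnished by Proposition \ref{prop: exist-lifts-x_i}, carrying out the comparison in the cotangent space of the QFSHA $\calF := {\uhg}'$. First I would record the structural facts we may invoke. By \cite[Proposition 3.6]{gavarini-02}, $\calF$ is a QFSHA; hence it is $I'_\hbar$--adically complete and topologically free as a $\Bbbk$--module, and its semiclassical reduction $\pi \colon \calF \twoheadrightarrow \calF\big/\hbar\,\calF = F[[G^*]]$ is the formal function algebra of the dual group $G^*$. Writing $\mathfrak{m} := \textsl{Ker}\big(\epsilon_{F[[G^*]]}\big)$ and $J := \textsl{Ker}(\epsilon)\cap\calF$, one has the canonical splitting $I'_\hbar\big/{(I'_\hbar)}^2 = \Bbbk\cdot[\hbar]\,\oplus\,\mathfrak{m}\big/\mathfrak{m}^2$, the first summand spanned by the class of $\hbar$ and the second being the image of $J$ under $\pi$. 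Moreover, as recorded in the proof of Proposition \ref{prop: exist-lifts-x_i} (and in the underlying analysis of \cite{gavarini-02}), the assignment $\overline{z}\mapsto\big[\pi(\hbar\,z)\big]$, for $z\in\textsl{Ker}(\epsilon)$ with $\hbar\,z\in\calF$, gives a well-defined $\Bbbk$--linear isomorphism $\g \xrightarrow{\sim} \mathfrak{m}\big/\mathfrak{m}^2$.

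Next I would compute the cotangent classes of the $x'_i$. Since $\epsilon(x'_i)=\hbar\,\epsilon(x_i)=0$ we have $x'_i\in J$, so the $\Bbbk\cdot[\hbar]$--component of $[x'_i]$ vanishes and $[x'_i]$ lies in the summand $\mathfrak{m}\big/\mathfrak{m}^2$, where it equals $\big[\pi(x'_i)\big]=\big[\pi(\hbar\,x_i)\big]$, i.e.\ the image of $\overline{x}_i\in\g$ under the isomorphism above. As $\{\overline{x}_i\}_{i\in I}$ is a $\Bbbk$--basis of $\g$ by hypothesis and the map is a linear isomorphism, $\{[x'_i]\}_{i\in I}$ is a $\Bbbk$--basis of $\mathfrak{m}\big/\mathfrak{m}^2$; adjoining $[\hbar]$ we conclude that $\{[\hbar]\}\cup\{[x'_i]\}_{i\in I}$ is a $\Bbbk$--basis of $I'_\hbar\big/{(I'_\hbar)}^2$. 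This is exactly the point flagged in Remark \ref{rem: generation-ughx'-by-x'_i}: the cotangent class of $x'_i$ depends only on the leading term $\overline{x}_i$ and not on the particular lift $x_i$, so replacing $x_i$ by another lift of $\overline{x}_i$ perturbs $x'_i$ only by an element of ${(I'_\hbar)}^2$ and leaves the class unchanged.

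Finally I would upgrade this cotangent statement to full topological generation. By Proposition \ref{prop: exist-lifts-x_i} the equality $\calF = \Bbbk\big[\big[ {\{ y'_i \}}_{i\in I} \cup \{\hbar\} \big]\big]$ encodes that $\mathrm{gr}_{I'_\hbar}\calF$ is freely generated in degree one by the classes $[y'_i]$ and $[\hbar]$ (commutators being of higher $I'_\hbar$--order, as $\calF$ quantizes the commutative $F[[G^*]]$). Since $\{[\hbar]\}\cup\{[x'_i]\}$ is another $\Bbbk$--basis of that degree-one part, it too freely generates $\mathrm{gr}_{I'_\hbar}\calF$; the standard lifting-from-the-associated-graded argument in the complete algebra $\calF$ then shows that every element of $\calF$ is an $I'_\hbar$--adically summable series of ordered monomials in the $x'_i$ and $\hbar$. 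Consequently the unital $\kh$--subalgebra ${\uhg}''$ generated by the $x'_i$ is $I'_\hbar$--dense in ${\uhg}'$, the $I'_\hbar$-- and $I''_\hbar$--adic topologies agree on it, and ${\uhg}'$ is the $I''_\hbar$--adic completion of ${\uhg}''$, i.e.\ ${\uhg}' = \Bbbk\big[\big[ {\{ x'_i \}}_{i\in I} \cup \{\hbar\} \big]\big]$.

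I expect the main obstacle to be the input invoked in the first two paragraphs, namely that $\big[\pi(\hbar\,z)\big]\in\mathfrak{m}\big/\mathfrak{m}^2$ depends only on $\overline{z}\in\g$ and defines an isomorphism $\g\cong\mathfrak{m}\big/\mathfrak{m}^2$; equivalently, that two lifts of the same $\overline{x}_i$ differ, after multiplication by $\hbar$, by an element of ${(I'_\hbar)}^2$. This is precisely the delicate part of the Drinfeld-functor analysis of \cite{gavarini-02} and is what renders the conclusion independent of the chosen lifts; granting it, the remaining steps are purely formal completeness bookkeeping.
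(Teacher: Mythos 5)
Your proposal takes a genuinely different route from the paper. The paper's proof is a direct successive-approximation argument on elements: given $\, x' = \hbar^n x \in {\uhg}' \, $, it invokes \cite[Lemma 3.3]{gavarini-02} to bound the PBW degree of $\, \bar{x} \, $ by $n$, replaces $\hbar^n x$ by a polynomial $P_0\big(\{x'_i\}\big)$ of degree $\leq n\,$, and iterates to get $\, x' = \sum_k \hbar^k\, x'_{(k)} \, $; no appeal to Proposition \ref{prop: exist-lifts-x_i} is needed. You instead compress all the content into a degree-one statement in $\mathrm{gr}_{I'_\hbar}\calF$ and then lift through the filtration. That architecture is legitimate, but as written it has a genuine gap exactly at the step you flag: the well-definedness of $\, \overline{z} \mapsto \big[\pi(\hbar\,z)\big] \, $, equivalently the inclusion $\, \calF \cap \hbar^2\,\uhg \cap \textsl{Ker}(\epsilon) \subseteq {(I'_\hbar)}^2 \, $, is \emph{not} citable from \cite{gavarini-02} in that form --- it is essentially the content of the lemma itself (this is the very subtlety that Remark \ref{rem: generation-ughx'-by-x'_i} warns about), and the paper proves it via the degree-bound iteration just described. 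Deferring it to the literature makes your argument circular at its core. The gap is fillable without rerunning the iteration, however, \emph{if} you lean on Proposition \ref{prop: exist-lifts-x_i}\textit{(b)} (which you do invoke): from $\, \calF = \Bbbk\big[\big[\{y'_i\} \cup \{\hbar\}\big]\big] \, $ one gets $\, I'_\hbar \subseteq \hbar\,\uhg \, $, hence ${(I'_\hbar)}^2 \subseteq \hbar^2\,\uhg\,$; then for $\, d := \hbar z - \hbar z' \in \calF \cap \hbar^2\,\uhg \, $ expand $\, d \equiv c\,\hbar + \sum_i c_i\, y'_i \mod {(I'_\hbar)}^2 \, $ with $\, c, c_i \in \Bbbk \, $, divide by $\hbar$ and reduce modulo $\hbar\,\uhg$ to obtain $\, 0 = c + \sum_i c_i\, \overline{y}_i \, $ in $U(\g)$, forcing $\, c = c_i = 0 \, $, i.e.\ $\, d \in {(I'_\hbar)}^2 \, $. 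Some such argument must actually appear in your proof.

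A second, smaller error: $\mathrm{gr}_{I'_\hbar}\calF$ is \emph{not} freely (nor commutatively) generated in degree one, and commutators are not of ``higher $I'_\hbar$--order''. Indeed $\, [\,y'_i , y'_j\,] = \hbar^2\,[\,y_i , y_j\,] \equiv \sum_k c_k\,\hbar\, y'_k \mod {(I'_\hbar)}^3 \, $, which is a generally nonzero class in degree \emph{two} --- the same degree as the product $y'_i\,y'_j$ --- so the associated graded is a noncommutative Rees-type algebra unless $\g$ is abelian (commutativity of $\calF\big/\hbar\,\calF$ only gives $[\calF,\calF] \subseteq \hbar\,\calF$, which is degree-preserving here since $\hbar$ itself has degree one). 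Fortunately your lifting step only requires that $\mathrm{gr}_{I'_\hbar}\calF$ be \emph{generated} in degree one by $[\hbar]$ and the $[y'_i]$ --- which does hold --- and that the degree-one part admit $\{[\hbar]\} \cup \{[x'_i]\}$ as an alternative basis; so the successive-approximation conclusion survives once the free-generation claim is deleted. With these two repairs your proof is correct, and it has the merit of isolating conceptually \emph{why} the lemma holds (only cotangent classes matter); the paper's element-wise iteration buys instead a self-contained argument, independent of Proposition \ref{prop: exist-lifts-x_i}, which is what allows its painless transfer to the infinite-dimensional setting in the proof of Theorem \ref{thm: uhgx'=utildehgx}.
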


\begin{proof}
 Let us consider an element  $ \, x' \in {\uhg}' \setminus \{0\} \, $.  Then there exist unique  $ \, n \in \N \, $  and  $ \, x \in \uhg \setminus h \, \uhg \, $  such that  $ \, x' = h^n x \, $;  \,applying  \cite[Lemma 3.3]{gavarini-02},  we find that for  $ \, \bar{x} := \big(\, x \!\mod \hbar\,\uhg \big) = U(\g) \, $  we have  $ \, \partial(\bar{x}) \leq n \, $  with respect to the canonical filtration of  $ U(\g) \, $,  \,that is  $ \bar{x} $  can be written as a linear combination of PBW monomials (w.r.t.\ any  $ \Bbbk $--basis  of  $ \g \, $)  whose degree are less or equal than  $ n \, $.  In particular, for the given  $ \Bbbk $--basis  $ {\{ \overline{x}_i \}}_{i \in I} $  of  $ \g $  we have  $ \, \bar{x} = P\big({\{\bar{x}_i\}}_{i \in I}\big) \, $  for some polynomial  $ P $  in the  $ \bar{x}_i $'s  with coefficients in  $ \Bbbk $  and degree  $ \, \partial(P) \leq n \, $.  Then
  $$  x_{\langle 0 \rangle}  \, := \,  P \big( {\{x_i\}}_{i \in I} \big)  \, \equiv \,  x \mod\, h \, \uhg  $$
 hence  $ \, x = x_{\langle 0 \rangle} + h \, x_{\langle 1 \rangle} \, $  for some  $ \, x_{\langle 1 \rangle} \in \uhg \, $,  \,and also  $ \, x' = \hbar^n x = \hbar^n x_{\langle 0 \rangle} + \hbar^{n+1} \, x_{\langle 1 \rangle} \, $.  Now we can write
  $$  \hbar^n x_0  \; = \;  \hbar^n P \big( {\{x_i\}}_{i \in I} \big)  \; = \;  P_0 \big( {\{\, \hbar\,x_i = : x'_i \,\}}_{i \in I} \big)  \; = \;  P_0 \big( {\{\, x'_i \,\}}_{i \in I} \big)  \; =: \;  x'_{(0)}  $$
 where  $ P_0 $  is again a polynomial (in the  $ x'_i $'s)  of degree bounded by  $ n \, $;  \,but then  $ \; x'_{(0)} \, := \, P_0 \big( {\{\, x'_i \,\}}_{i \in I} \big) \, \in \, {\uhg}' \; $  and the above yields
  $$  h^{n+1} x_{\langle 1 \rangle}  \, = \,  x' - x'_{(0)} \, \in \, {\uhg}'  $$
 Now, if  $ \, x'_{(1)} := h^{n+1} x_{\langle 1 \rangle} = 0 \, $  we are done; if not, we repeat the same argument with  $ x_{(1)} $  in the role of  $ \, x_{(0)} := x' \, $.  Iterating this procedure, we end up with a sequence  $ \, x'_{(k)} := P_k \big( {\{\, x'_i \,\}}_{i \in I} \big) \in {\uhg}' \, $,  $ \, k \in \N \, $,  \,where each  $ P_k $  is a polynomial in the  $ x'_i $'s  with coefficients in  $ \Bbbk \, $,  \,such that  $ \, x' =
\sum\limits_{k=0}^{+\infty} \hbar^k x'_{(k)} \, $  in  $ {\uhg}' \, $,  \,whence the claim.
\end{proof}

\vskip9pt

\begin{rem}  \label{rem: inf-dim case}
 A careful checking of all the analysis of Drinfeld's functors carried on in  \cite{gavarini-02}  show that they do make sense, and the results about them still hold true, also under the assumption that  $ \g $  be infinite dimensional, up to suitably (though slightly) adapting definitions to this more general setup.  Mainly, one has to properly choose the kind of topology, and relative completions, that one considers on the ``quantum groups''   --- as free  $ \kh $--modules  ---   under exam; accordingly, also the dual  $ \kh $--modules  (either  \textsl{full\/}  duals or  \textsl{topological\/}  ones) have to be chosen appropriately, and more choices are possible, indeed.  Note that these issues in fact do not depend on the ``quantum'' nature of the problem, but rather they are just plain translations of the same issues at the (semi)classical level, concerning the very definition and nature of  $ U(\g) $  and/or  $ F[[G]] $  when  $ \g $  and  $ G $  are infinite-dimensional.  Once we fix those issues, making appropriate choices, in the semiclassical setup, one has a canonical way to fix them, in a consistent way, in the quantum setup as well.
\end{rem}

\medskip

%
%
 \subsection{QDP for  $ \uhgx \, $:  the formal version}
 We deal now with a QUEA  $ \uhgx $  over a continuous Kac-Moody algebra  $ \g_X $  as in  \S \ref{subsec: quantiz_cont-KM-bialg's}.  We begin considering a new algebra whose definition is prompted by the description of  $ {\uhg}' $  in   \hbox{Proposition \ref{prop: exist-lifts-x_i}.}

\medskip

\begin{definition}  \label{def: Utildehgx}
 Given  $ \uhgx $  as above, we define  $ \utildehgx $  as follows.  Let  $ \dot{U}_\hbar(\g_X) $  be the unital  $ \kh $--subalgebra  of  $ \uhgx $  generated by all the elements
  $$  \barXi_\alpha := \hbar \, \varXi_\alpha  \;\; ,  \quad
   \barX{}_\alpha^{\,+} := \big(\, q - q^{-1} \big) \, X_\alpha^+  \;\; ,  \quad
  \barX{}_\alpha^{\,-} := \big(\, q - q^{-1} \big) \, X_\alpha^-  \quad   \eqno  \forall \;\; \alpha \in \intsf(X)  \quad  $$
 and let  $ I_\hbar $  is the two-sided ideal of  $ \dot{U}_\hbar(\g_X) $  generated by the  $ \barXi_\alpha $'s,  the  $ \barX{}_\alpha^{\,\pm} $'s  and  $ \, \hbar \cdot 1_{\uhgx} \, $:  \,then  $ \utildehgx $  by definition is the  $ I_\hbar $--adic  completion of  $ \dot{U}_\hbar(\g_X) \, $.
                                                                       \par
   Similarly, we define  $ \utildehbxp \, $,  resp.\  $ \utildehbxm \, $,  by the same procedure, but taking only the  $ \barXi_\ia $'s  and the  $ \barX{}_\alpha^{\,+} $'s,  resp.\ the  $ \barXi_\ia $'s  and the  $ \barX{}_\alpha^{\,-} $'s  ($ \, \ia \in \intsf(X) \, $),  as generators to deal with.  Clearly  $ \widetilde{U}_\hbar\big(\b_X^\pm\big) \, $,  with either sign, is also a subalgebra of  $ \utildehgx \, $.
\end{definition}

\vskip7pt

   The following is a direct consequence of  \S \ref{subsec: quantiz_cont-KM-bialg's},  Theorem \ref{thm: Hopf-str_Uhgx}  and  Definition \ref{def: Utildehgx}:

\vskip9pt

\begin{proposition}  \label{prop: pres-utildehgx}
 The algebra  $ \utildehgx $  is a Hopf  $ \kh $--subalgebra  of  $ \uhgx \, $.  Moreover, it admits the following presentation: it is the topological,  $ I_\hbar $--adically  complete Hopf  $ \kh $--algebra  with generators  $ \, \barXi_\ia \, $, $ \, \barX{}^{\,\pm}_\ib \, \big(\, \ia \, , \ib \in \intsf(X) \,\big) \, $  and relations
 (for all  $ \, \ia , \ib \in \intsf(X) \, $,  with in addition  $ \, (\ia,\ib) \in \serre{X} \, $  for the last relation)
%
  $$  \displaylines{
   \barXi_{\ia} \, \barXi_{\ib} - \barXi_{\ib} \, \barXi_{\ia} = \, 0  \; ,  \;\;\;
 \barXi_{\alpha \oplus \beta}  =  \delta_{\alpha \oplus \beta} \big(\, \barXi_\alpha + \barXi_\beta \big)  \; ,  \;\;\;
 \barXi_{\ia} \, \barX{}^{\,\pm}_\ib - \barX{}^{\,\pm}_\ib \, \barXi_{\ia}  = \,  \pm \hbar \rbf{\ia}{\ib} \barX{}^{\,\pm}_\ib  \cr
%
   \barX{}^{\,+}_\ia \, \barX{}^{\,-}_\ib - \barX{}^{\,-}_\ib \, \barX{}^{\,+}_\ia  \,\; = \;\,  \big(\, q - q^{-1} \big) \bigg( \drc{\ia \ib} \Big(\, \barK_\ia \! - \barK{}_\ia^{\,-1} \Big) \; +    \hfill  \cr
   + \; \ca{\ia}{\ib} \left(\, q^{\qcc{\ia}{\ib}{+}} \barX{}^{\,+}_{\!\ia\ominus\ib} \, \barK{}_\beta^{\,\ca{\ia}{\ib}} \! - q^{\qcc{\ia}{\ib}{-}} \barK{}_\alpha^{\,\ca{\ia}{\ib}} \barX{}^{\,-}_{\ib\ominus\ia} \right) \, + \; \qcb{\ib}{\ia}{} \, q^{\qcb{\ib}{\ia}{}} \, \barX{}^{\,+}_{(\iM{\ia}{\ib})\ominus\ib} \, \barK{}_{\im{\ia\, }{\, \ib}}^{\,\qcb{\ia}{\ib}{}} \, \barX{}^{\,-}_{(\iM{\ia}{\ib})\ominus{\ia}} \bigg)  \cr
%
%
   \barX{}^{\,\pm}_{\ia} \, \barX{}^{\,\pm}_{\ib} \, - \, q^{\qcr{\ia}{\ib}{}} \, \barX{}^{\,\pm}_{\ib} \, \barX{}^{\,\pm}_{\ia}  \; = \; \big(\, q - q^{-1} \big) \, \qcb{\ia}{\ib}{} \, \Big( \pm q^{\qcs{\ia}{\ib}{\pm}} \, \barX{}^{\,\pm}_{\ia\oplus\ib} \, + \, \barX{}^{\,\pm}_{\iM{\ia}{\ib}} \, \barX{}^{\,\pm}_{\im{\ia\, }{\,\ib}} \,\Big)  }  $$
 (notation of\/  \S \ref{subsec: quantiz_cont-KM-bialg's}),  where  $ \, \barK{}^{\,\pm 1}_\ia := \exp\big(\pm \barXi_\ia / 2 \big) = \exp\big(\pm \hbar \, \varXi / 2 \big) = K_\ia^{\pm 1} \, $  and  $ I_\hbar $  is the two-sided ideal generated by the  $ \barXi_\ia $'s  and  $ \barX{}^{\,\pm}_\ib $'s,  \,with Hopf structure given by
  $$  \displaylines{
   \epsilon\big(\,\barXi_\ia\big) \, = \, 0 \, = \, \epsilon\big(\,\barX{}^{\,\pm}_\ia\big)  \quad ,   \qquad \qquad  \Delta\big(\,\barXi_\ia\big)\ := \barXi_\ia \otimes 1 + 1 \otimes \barXi_\ia  \cr
   \Delta\big(\,\barX{}^{\,+}_\ia\big)  \; = \;  \barX{}^{\,+}_\ia \otimes 1 + \barK{}_\ia^{\,+1} \otimes \barX{}^{\,+}_\ia \, + \, {\textstyle \sum_{\ia = \ib \oplus \ic}} \, \ca{\ib}{, \,\ia} \, \qcs{\ic}{\ib}{+} \cdot \, \barK{}_\ic^{\,+1} \, \barX{}^{\,+}_\ib \otimes \barX{}^{\,+}_\ic  \cr
   \Delta\big(\,\barX{}^{\,-}_\alpha\big)  \, = \,  1 \otimes \barX{}^{\,-}_\alpha + \barX{}^{\,-}_\alpha \otimes \barK{}_\alpha^{\,-1} \, - \, {\textstyle \sum_{\alpha = \beta \oplus \gamma}} \, \ca{\ib}{, \,\ia} \, \qcs{\ic}{\ib}{-} \cdot \barX{}^{\,-}_\beta \otimes \barX{}^{\,-}_\ic \, \barK{}_\ib^{\,-1}  }  $$
 --- where  $ \Delta $  takes values in the completion of the algebraic tensor product with respect to the ``weak \&  $ \hbar $--adic  topology'' ---   with antipode
 $ \; S := \sum_n m^{(n)} \circ {(\id - \iota \circ \epsilon)}^{\otimes n} \circ \Delta^{(n)} \, $.
 \vskip3pt
   Similarly,  $ \utildehbxp \, $,  resp.\  $ \utildehbxm \, $,  is a Hopf  $ \, \kh $--subalgebra  of  $ \, \uhbxp \, $,  resp.\  of  $ \, \uhbxm \, $,  and it admits an analogous presentation by generators and relations.
\end{proposition}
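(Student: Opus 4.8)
The plan is to transport the presentation and Hopf structure of $\uhgx$ recorded in Theorem~\ref{thm: Hopf-str_Uhgx} through the rescaling $\varXi_\ia\mapsto\barXi_\ia=\hbar\,\varXi_\ia$, $\qxpm{\ia}\mapsto\barX{}^{\,\pm}_\ia=\big(q-q^{-1}\big)\qxpm{\ia}$ of Definition~\ref{def: Utildehgx}. First I would show that $\utildehgx$ is a Hopf $\kh$-subalgebra: being by construction a unital subalgebra, only closure under $\epsilon$, $\Delta$ and $S$ needs checking. Closure under $\epsilon$ is immediate, as every generator is a scalar multiple of one of $\varXi_\ia,\qxpm{\ia}$, all killed by $\epsilon$. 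For $\Delta$, the element $\barXi_\ia=\hbar\,\varXi_\ia$ remains primitive, while for $\barX{}^{\,\pm}_\ia$ I would multiply the coproduct formula of Theorem~\ref{thm: Hopf-str_Uhgx}(1) by the rescaling factor $\big(q-q^{-1}\big)$ and observe the \emph{decisive point}: in each summand indexed by $\ia=\ib\oplus\ic$, the explicit factor $\big(q-q^{-1}\big)$ already present in the coproduct, the overall rescaling factor, and the two root-vector factors $\qxpm{\ib},\qxpm{\ic}$ recombine so that every term becomes a product of the $\barX{}^{\,\pm}$'s and $\barK_\ic^{\pm1}=\qxz{\ic}{\pm1}$ with coefficient in $\kh$, with no surviving negative power of $\big(q-q^{-1}\big)$. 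Thus $\Delta\big(\barX{}^{\,\pm}_\ia\big)$ lies in the (weak \& $\hbar$-adic) completed tensor square of $\utildehgx$, and one reads off precisely the coproduct formulas in the statement.

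Closure under the antipode then comes for free. Since $S$ is given on $\uhgx$ by the universal series $S=\sum_n m^{(n)}\circ(\id-\iota\circ\epsilon)^{\otimes n}\circ\Delta^{(n)}$, and $\utildehgx$ is topologically closed under the product $m$, the unit $\iota$, the counit $\epsilon$ and, by the previous step, the coproduct $\Delta$, this series applied to any element of $\utildehgx$ converges $I_\hbar$-adically inside $\utildehgx$; hence $S(\utildehgx)\subseteq\utildehgx$ and $\utildehgx$ is a Hopf $\kh$-subalgebra of $\uhgx$.

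Next I would establish the presentation by rescaling the defining relations of $\uhgx$. Multiplying the two relations among the $\varXi$'s by $\hbar^2$ and $\hbar$, the relation for $\big[\varXi_\ia,\qxpm{\ib}\big]$ by $\hbar\big(q-q^{-1}\big)$, and each of the $\qxp{}\qxm{}$-relation and the Serre-type $\qxpm{}\qxpm{}$-relation by $\big(q-q^{-1}\big)^2$, a routine check shows that every term reassembles into a product of the rescaled generators with coefficient in $\kh$. The only slightly delicate case is the $\qxp{\ia}\qxm{\ib}$-relation: there the factor $\big(q-q^{-1}\big)^2$ exactly cancels the denominator of the Cartan term $\drc{\ia\ib}\big(\qxz{\ia}{}-\qxz{\ia}{-1}\big)\big/\big(q-q^{-1}\big)$ and leaves a single overall $\big(q-q^{-1}\big)$ in front of the remaining terms, reproducing the displayed relation (recall $\barK_\ia^{\pm1}=\exp(\pm\hbar\,\varXi_\ia/2)=\qxz{\ia}{\pm1}$). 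This produces a continuous surjection $A\twoheadrightarrow\utildehgx$ from the abstract $I_\hbar$-complete Hopf algebra $A$ defined by the listed generators and relations.

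It remains to prove that $A\to\utildehgx$ is injective, i.e.\ that the listed relations are complete over $\kh$; this is the \textbf{main obstacle}. Over $\Bbbk(\!(\hbar)\!)$ it is clear, since there $\hbar$ and $q-q^{-1}=\hbar+O(\hbar^3)$ are units, the rescaling is an invertible change of variables, and $\Bbbk(\!(\hbar)\!)\otimes_\kh\utildehgx=\Bbbk(\!(\hbar)\!)\otimes_\kh\uhgx$ inherits the presentation from Theorem~\ref{thm: Hopf-str_Uhgx}; the difficulty is to rule out that $A$ is strictly larger than $\utildehgx$ and collapses only after inverting $\hbar$. I would resolve this using the triangular decomposition furnished by Theorem~\ref{thm: Hopf-str_Uhgx}(4): as $\uhgx$ is the double cross product $\uhbxp\dcs\uhbxm$ glued along $\fun{X}$, it carries a PBW-type factorization, and the rescaling sends ordered PBW monomials to ordered PBW monomials up to units. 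I would first treat the Borels $\utildehbxpm$, whose presentations are easier since only same-sign generators and $\barK$ occur, so that $\Delta\big(\barX{}^{\,\pm}\big)$ already lands in $\utildehbxpm\wh{\ten}\utildehbxpm$; this yields the asserted one-sided presentations and exhibits each $\utildehbxpm$ as a free $\kh$-form. The listed relations then suffice to bring any monomial of $A$ into the corresponding PBW normal form, so the two PBW bases match and $A\xrightarrow{\sim}\utildehgx$. The Borel statements follow by running the same argument with generators of a single sign only.
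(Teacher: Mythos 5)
Your proposal is correct, and its computational core --- transporting the presentation and Hopf formulas of Theorem \ref{thm: Hopf-str_Uhgx} through the rescaling $\barXi_\ia = \hbar\,\varXi_\ia$, $\barX{}^{\,\pm}_\ia = \big(q-q^{-1}\big)X^\pm_\ia$, with the observation that the explicit factors $\big(q-q^{-1}\big)$ in the coproduct and in the mixed relation are exactly absorbed by the rescaled generators --- is precisely what the paper means when it presents the Proposition as ``a direct consequence of \S\ref{subsec: quantiz_cont-KM-bialg's}, Theorem \ref{thm: Hopf-str_Uhgx} and Definition \ref{def: Utildehgx}''; the paper offers no written proof beyond that assertion. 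Where you genuinely diverge is in treating the completeness of the listed relations, i.e.\ the injectivity of your surjection $A \twoheadrightarrow \utildehgx$, as a substantive step and attacking it via the triangular decomposition and PBW-type monomials: the paper silently subsumes this into the ``direct consequence'', but your instinct is sound, and in fact the same PBW/triangular-decomposition technique is exactly what the paper itself deploys later, in the proofs of Theorem \ref{thm: uhgx'=utildehgx} and especially Theorem \ref{thm: uqgx'=utildeqgx} for the polynomial analogue. Two loose ends in your sketch deserve flagging, both repairable. First, the rescaling sends ordered PBW monomials to their images multiplied by powers of $\hbar$ (times units), not ``up to units'': since only $\kh$-linear independence is needed for the basis-matching argument and $\kh$ is a domain, this slip is harmless, but the phrase should be corrected. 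Second, the ordered monomials in \emph{all} the $\barXi_\ia$'s and $\barX{}^{\,\pm}_\ia$'s form only a (topological) spanning set, not a basis: the linear relations $\barXi_{\ia\oplus\ib} = \delta_{\ia\oplus\ib}\big(\,\barXi_\ia + \barXi_\ib\big)$ --- the very relations the paper points out (proof of Theorem \ref{thm: uhgx'=utildehgx}) make $\big\{ x^+_\ia , \xi_\ia , x^-_\ia \big\}$ fall short of a basis of $\g_X$ --- hold identically in the abstract algebra $A$ as well, so ``the two PBW bases match'' requires either a preliminary choice of a $\Z$-basis of $\fun{X}$ in the Cartan part or the spanning-set-plus-degree-filtration bookkeeping that the paper uses in the proof of Theorem \ref{thm: uqgx'=utildeqgx}; with that amendment your argument cleanly closes the one step the paper leaves implicit.
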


\vskip7pt

   As a direct consequence of the previous result, we get the following, which is nothing but an appropriate version of the QDP for the QUEA  $ \uhgx \, $:

\vskip9pt

\begin{theorem}  \label{thm: QDP x Uhgx}
 The topological Hopf algebra  $ \utildehgx $  is a QFSHA, which is a quantization of the formal Poisson group  $ G^*_{\!X} $  dual to the Lie bialgebra  $ \g_X \, $.
                                         \par
   More in detail, the following holds.  For  $ \; \utildezgx := \utildehgx \Big/ \hbar\,\utildehgx \; $, \,we have:
 \vskip-1pt
   (a)\;  $ \; \utildezgx \, $  is a  \textsl{commutative}  (topological) Hopf algebra;
 \vskip3pt
   (\,b)\;  $ \; \utildezgx \, $  is  $ \, I $--adically  complete, with  $ \; I := \textsl{Ker}\,\big(\, \epsilon : \utildezgx \relbar\joinrel\relbar\joinrel\relbar\joinrel\longrightarrow \Bbbk \,\big) \, $  where  $ \epsilon $  is the augmentation map of the Hopf algebra  $ \utildezgx \, $;
 \vskip3pt
   (\,c)\;  the Lie bialgebra structure on  $ \, I \big/ I^2 \, $  from  $ \utildehgx $   --- following  \S \ref{subsec: quant_Lie-bialg_&_P-groups}  ---   makes  $ \, I \big/ I^2 \, $  into a Lie  $ \, \Bbbk $--bialgebra  isomorphic to  $ \, \g_X \, $,  given by (for all  $ \, \ia \in \intsf(X) \, $)
  $$  \Big(\; \barXi_\ia \; \big(\, \text{\rm mod\ } \hbar\,\utildehgx \big) \;\; \text{\rm mod\ } I^2 \,\Big) \mapsto \, \xi_\ia  \; ,   \quad
   \Big(\; \barX_\ia^{\,\pm} \; \big(\, \text{\rm mod\ } \hbar\,\utildehgx \big) \;\; \text{\rm mod\ } I^2 \,\Big) \mapsto \, x_\ia^\pm  $$
\end{theorem}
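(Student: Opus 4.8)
The plan is to read off everything from the explicit presentation of $\utildehgx$ furnished by Proposition \ref{prop: pres-utildehgx}, and to verify directly the two defining axioms of a QFSHA recalled in \S \ref{subsec: quant_Lie-bialg_&_P-groups} together with the identification of the resulting Poisson group. First I would establish claim (a). The key elementary facts are that $\, q - q^{-1} = \hbar + O\big(\hbar^3\big) \equiv 0 \pmod{\hbar} \,$ and that $\, q^{\qcr{\ia}{\ib}{}}, q^{\qcc{\ia}{\ib}{\pm}}, q^{\qcs{\ia}{\ib}{\pm}} \equiv 1 \pmod{\hbar} \,$. Reducing the three families of defining relations of Proposition \ref{prop: pres-utildehgx} modulo $\, \hbar\,\utildehgx \,$, every right-hand side carries either a factor $\, q - q^{-1} \,$ or a factor $\, \pm\hbar\,\rbf{\ia}{\ib} \,$ and hence vanishes, while the factors $\, q^{\qcr{\ia}{\ib}{}} \,$ collapse to $1$; thus all the commutators $\, \big[\barXi_\ia,\barX{}^{\,\pm}_\ib\big] \,$, $\, \big[\barX{}^{\,+}_\ia,\barX{}^{\,-}_\ib\big] \,$ and $\, \big[\barX{}^{\,\pm}_\ia,\barX{}^{\,\pm}_\ib\big] \,$ become zero in $\utildezgx$. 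Since the $\barXi_\ia$'s commute among themselves and with each $\, \barK{}^{\,\pm1}_\ia = \exp\!\big(\pm\barXi_\ia/2\big) \,$, this shows that $\utildezgx$ is commutative; the Hopf structure descends from that of Proposition \ref{prop: pres-utildehgx}, giving (a).

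For claim (b), completeness is inherited from the construction. By Definition \ref{def: Utildehgx}, $\utildehgx$ is the $\, I_\hbar \,$--adic completion of $\, \dot U_\hbar(\g_X) \,$, and $\, \hbar \cdot 1 \in I_\hbar \,$ by definition; moreover $\, I = \textsl{Ker}(\epsilon) \,$ is exactly the image of $I_\hbar$ in $\, \utildezgx = \utildehgx\big/\hbar\,\utildehgx \,$. Since $\hbar$ belongs to the ideal with respect to which $\utildehgx$ is complete, the quotient $\, \utildehgx\big/\hbar\,\utildehgx \,$ is automatically complete for the induced $I$--adic topology (completion commutes with the quotient by $\hbar$ in this filtered setting), which is claim (b).

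It remains to compute the Lie bialgebra structure on $\, I\big/I^2 \,$ and match it with $\g_X$ (claim (c)), which is the heart of the matter. The Lie bracket is the one induced by the Poisson bracket $\, \{x,y\} = \hbar^{-1}\big[x',y'\big] \bmod \hbar \,$ of \S \ref{subsec: quant_Lie-bialg_&_P-groups}; I would compute it on the classes of the generators from the relations of Proposition \ref{prop: pres-utildehgx}, dividing each commutator by $\hbar$ and using $\, \hbar^{-1}\big(q-q^{-1}\big)\equiv 1 \,$, $\, \hbar^{-1}\big(q^{\qcr{\ia}{\ib}{}}-1\big)\to \qcr{\ia}{\ib}{}/2 \,$ and $\, \barK_\ia - \barK{}_\ia^{-1} = \barXi_\ia + O\big(\barXi_\ia^{\,3}\big) \,$, then reducing modulo $\, I^2 \,$ (so that all quadratic terms $\, \barX\,\barX \,$ and all higher powers of $\barXi_\ia$ disappear, and each $\barK$ not absorbed into $\barXi_\ia - \barXi{}_\ia^{-1}$ collapses to $1$). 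This yields precisely $\, \big[\bar\xi_\ia,\bar x^\pm_\ib\big] = \pm\rbf{\ia}{\ib}\,\bar x^\pm_\ib \,$, $\, \big[\bar x^+_\ia,\bar x^-_\ib\big] = \drc{\ia\ib}\,\bar\xi_\ia + \ca{\ia}{\ib}\big(\bar x^+_{\ia\ominus\ib}-\bar x^-_{\ib\ominus\ia}\big) \,$ and $\, \big[\bar x^\pm_\ia,\bar x^\pm_\ib\big] = \pm\,\ca{\ia}{,\,\ia\oplus\ib}\,\bar x^\pm_{\ia\oplus\ib} \,$ for $\, (\ia,\ib)\in\serre{X} \,$, i.e.\ exactly the defining relations of $\g_X$ under $\, \bar\xi_\ia\mapsto\xi_\ia \,$, $\, \bar x^\pm_\ia\mapsto x^\pm_\ia \,$. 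Dually, the Lie cobracket on $\, I\big/I^2 \,$ is obtained by antisymmetrizing the reduced coproduct of Proposition \ref{prop: pres-utildehgx} and projecting each tensor factor onto $\, I\big/I^2 \,$; here the primitive terms cancel, the term $\, \barK{}_\ia^{\,+1}\otimes\barX{}^{\,+}_\ia \,$ contributes (through $\, \barK_\ia = 1 + \tfrac12\barXi_\ia + \cdots \,$) the summand $\, \xi_\ia\wedge x^+_\ia \,$, and the remaining sum over $\, \ia = \ib\oplus\ic \,$ contributes $\, \textstyle\sum \ca{\ib}{,\,\ia}\,x^+_\ib\wedge x^+_\ic \,$, to be matched with the cobracket $\, \delta(x^\pm_\ia) = \xi_\ia\wedge x^\pm_\ia + \sum_{\ib\oplus\ic=\ia}\ca{\ib}{,\,\ia}\,x^\pm_\ib\wedge x^\pm_\ic \,$ of $\g_X$.

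The hard part will be the bookkeeping in this last step: one must check that the various $q$--exponents $\, \qcc{\ia}{\ib}{\pm},\ \qcs{\ia}{\ib}{\pm},\ \qcr{\ia}{\ib}{} \,$ degenerate to the correct classical data and, above all, that the combinatorial coefficients $\, \qcs{\ic}{\ib}{+} \,$, $\, \qcb{\ia}{\ib}{} \,$, $\, \ca{\ia}{,\,\ia\oplus\ib} \,$ versus $\, \ca{\ib}{,\,\ia} \,$ recombine (after antisymmetrization, and using the strict-union/intersection conventions $\, \iM{\ia}{\ib},\ \im{\ia}{\ib} \,$) into the exact structure constants of the bracket and cobracket of $\g_X$, including the correct normalization of the $\, \bar\xi\wedge\bar x \,$ term inherent in the convention $\, q = \exp(\hbar/2) \,$; convergence of the infinite sums must also be controlled in the weak \& $\hbar$--adic topology. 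Once (a), (b) and (c) are in place, the commutative, $I$--adically complete Hopf algebra $\, \utildezgx \,$ is $\, F\big[\big[G^*_{\!X}\big]\big] \,$ for the formal group $\, G^*_{\!X} := \textsl{Spec}\,\utildezgx \,$, whose cotangent Lie bialgebra $\, I\big/I^2 \cong \g_X \,$ forces $\, \textsl{Lie}\,\big(G^*_{\!X}\big) \cong \g_X^* \,$; hence $\, G^*_{\!X} \,$ is the Poisson group dual to $\g_X$ and $\utildehgx$ is a QFSHA quantizing it, as asserted.
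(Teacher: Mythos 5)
Your proposal is correct and takes essentially the same route as the paper's own proof: claim \textit{(a)} is read off from the relations of Proposition \ref{prop: pres-utildehgx} modulo $\hbar$ (since $q \equiv 1$), claim \textit{(b)} from the $I_\hbar$--adic completeness built into Definition \ref{def: Utildehgx}, and claim \textit{(c)} by dividing the commutation relations and the antisymmetrized coproduct by $\hbar$ and reducing modulo $I^2$, exactly as in the paper's two sample computations for $\big[\,\check{X}_\ia^+ , \check{X}_\ib^-\,\big]$ and $\delta\big(\check{X}{}^{\,+}_\ia\big)$. The coefficient bookkeeping you defer (matching $\ca{\ib}{,\,\ia}$, $\qcs{\ic}{\ib}{+}$, $\qcb{\ia}{\ib}{}$ with the structure constants of $\g_X$) is likewise left by the paper to ``all other cases being similar'' with a reference to \cite{appel-sala-20}, so your level of detail matches the published argument.
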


\begin{proof}
 Claim  \textit{(a)\/}  follows at once from the commutation relations in  Proposition \ref{prop: pres-utildehgx},  since  $ \, q \equiv 1  \mod \hbar\,\kh \, $  by construction.
 \vskip3pt
   Similarly, claim  \textit{(b)\/}  follows from the fact that  $ \utildehgx $  is  $ I_\hbar $--adically  complete.
 \vskip3pt
   Finally, claim  \textit{(c)\/}  is a matter of sheer bookkeeping.  Indeed, it is clear by construction that the assignment in claim  \textit{(c)\/}  yields a  $ \Bbbk $--linear  isomorphism from  $ \, I \big/ I^2 \, $  to  $ \g_X \, $;  \,besides, one has just to check, tracking all definitions, that the recipe for the Lie bialgebra structure detailed in  \S \ref{subsec: quant_Lie-bialg_&_P-groups}  actually does provide for the generators of  $ \, I \big/ I^2 \, $  the very formulas that describe the Lie bracket and the Lie cobracket in  $ \g_X $  for the corresponding (following the assignment in claim  \textit{(c)}  ---   generators of  $ \g_X \, $.
                                                                      \par
   To give an insight, we show how to prove a couple of instances, all other cases being similar.  Using notation  $ \, \widetilde{U}_\hbar := \utildehgx \, $  and  $ \; {\check{X}}_\ic^\pm := \Big(\, \barX{}_\ic^{\,\pm} \, \big(\, \text{\rm mod\ } \hbar\,\widetilde{U}_\hbar \big) \,\; \text{\rm mod\ } I^2 \,\Big) \; $,  \;we show that
\begin{equation}  \label{eq: map-Lie-bracket}
   \big[\, {\check{X}}_\ia^+ \, , {\check{X}}_\ib^- \,\big]  \,\; \mapsto \;\,  \big[\, x_\ia^+ , x_\ib^- \,\big]
\end{equation}
 Indeed, by construction   --- setting  $ \; \check{\varXi}_\ia := \Big(\; \barXi_\ia \, \big(\, \text{\rm mod\ } \hbar\,\widetilde{U}_\hbar \big) \,\; \text{\rm mod\ } I^2 \,\Big) \, $  ---   we have
  $$  \displaylines{
   \big[\, {\check{X}}_\ia^+ \, , {\check{X}}_\ib^- \,\big]  \; = \;  \Big\{ \Big(\, \barX{}_\ia^{\,+} \; \text{\rm mod\ } \hbar\,\widetilde{U}_\hbar \Big) \, , \Big(\, \barX{}_\ib^{\,-} \; \text{\rm mod\ } \hbar\,\widetilde{U}_\hbar \Big) \Big\} \; \text{\rm mod\ } I^2  \; =   \hfill  \cr
   = \;  \Bigg(\, \frac{\,\big[\, \barX{}_\ia^{\,+} , \barX{}_\ib^{\,-} \,\big]\,}{\hbar} \; \Big(\, \text{\rm mod\ } \hbar\,\widetilde{U}_\hbar \Big) \Bigg) \; \text{\rm mod\ } I^2  \; =   \hfill  \cr
   \hfill   = \;  \Bigg(\, \frac{\, \barX{}_\ia^{\,+} \, \barX{}_\ib^{\,-} - \barX{}^{\,-}_\ib \, \barX{}^{\,+}_\ia \,}{\hbar} \; \Big(\, \text{\rm mod\ } \hbar\,\widetilde{U}_\hbar \Big) \Bigg) \; \text{\rm mod\ } I^2  \; =  \cr
   = \;  \Bigg( \Bigg(\, \frac{\; q - q^{-1} \,}{\hbar} \, \bigg(\, \drc{\ia \ib} \Big(\, \barK{}_\ia^{\,+1} \! - \barK{}_\ia^{\,-1} \Big) + \ca{\ia}{\ib} \left(\, q^{\qcc{\ia}{\ib}{+}} \barX{}^{\,+}_{\!\ia\ominus\ib} \, \barK{}_{\ib}^{\,\ca{\ia}{\ib}} \! - q^{\qcc{\ia}{\ib}{-}} \barK_{\ia}^{\,\ca{\ia}{\ib}} \barX{}^{\,-}_{\ib\ominus\ia} \,\right)  \, +   \hfill  \cr
   \hfill   + \;\;  \qcb{\ib}{\ia}{} \, q^{\qcb{\ib}{\ia}{}} \, \barX{}^{\,+}_{(\iM{\ia}{\ib})\ominus\ib} \, \barK{}_{\im{\ia\,}{\,\ib}}^{\,\qcb{\ia}{\ib}{}} \, \barX{}^{\,-}_{(\iM{\ia}{\ib})\ominus{\ia}} \,\bigg) \Bigg) \;\; \text{\rm mod\ } \hbar\,\widetilde{U}_\hbar \,\Bigg) \;\; \text{\rm mod\ } I^2  \; =  \cr
   = \;  \Bigg( \bigg(\, \drc{\ia \ib} \Big(\, \barK{}_\ia^{\,+1} \! - \barK{}_\ia^{\,-1} \Big) + \ca{\ia}{\ib} \left(\, \barX{}^{\,+}_{\!\ia\ominus\ib} \, \barK{}_{\ib}^{\,\ca{\ia}{\ib}} \! - \barK{}_{\ia}^{\,\ca{\ia}{\ib}} \barX{}^{\,-}_{\ib\ominus\ia} \,\right)  \, +   \hfill  \cr
   \hfill   + \;\;  \qcb{\ib}{\ia}{} \, \barX{}^{\,+}_{(\iM{\ia}{\ib})\ominus\ib} \, \barK{}_{\im{\ia\, }{\, \ib}}^{\,\qcb{\ia}{\ib}{}} \, \barX{}^{\,-}_{(\iM{\ia}{\ib})\ominus{\ia}} \,\bigg) \;\; \text{\rm mod\ } \hbar\,\widetilde{U}_\hbar \,\Bigg) \;\; \text{\rm mod\ } I^2  \; =  \cr
   = \;  \Bigg( \bigg(\, \drc{\ia \ib} \Big(\, \barK{}_\ia^{\,+1} \! - \barK{}_\ia^{\,-1} \Big) + \ca{\ia}{\ib} \left(\, \barX{}^{\,+}_{\!\ia\ominus\ib} \, \barK{}_{\ib}^{\,\ca{\ia}{\ib}} \! - \barK{}_{\ia}^{\,\ca{\ia}{\ib}} \barX{}^{\,-}_{\ib\ominus\ia} \,\right) \!\bigg) \;\; \text{\rm mod\ } \hbar\,\widetilde{U}_\hbar \,\Bigg) \;\; \text{\rm mod\ } I^2  \; =   \hfill  \cr
   \qquad   = \;  \Bigg( \bigg(\, \drc{\ia \ib} \, \barXi_\ia + \ca{\ia}{\ib} \left(\, \barX{}^{\,+}_{\!\ia\ominus\ib} - \barX{}^{\,-}_{\ib\ominus\ia} \,\right) \!\bigg) \;\; \text{\rm mod\ } \hbar\,\widetilde{U}_\hbar \,\Bigg) \;\; \text{\rm mod\ } I^2  \; =   \hfill  \cr
   \hfill   = \;  \bigg(\, \drc{\ia \ib} \, \check{\varXi}_\ia + \, \ca{\ia}{\ib} \left(\, \check{X}{}^{\,+}_{\!\ia\ominus\ib} - \check{X}{}^{\,-}_{\!\ib\ominus\ia} \,\right) \!\!\bigg)  \;\; \mapsto \;\;  \drc{\ia \ib} \, \xi_\ia \, + \, \ca{\ia}{\ib} \left(\, x^{\,+}_{\!\ia\ominus\ib} - x^{\,-}_{\ib\ominus\ia} \,\right)  \,\; = \;\,  \big[\, x_\ia^+ , x_\ib^- \,\big]  }  $$
 so that  \eqref{eq: map-Lie-bracket}  is proved.  Similarly, for the Lie cobracket we go and prove that
\begin{equation}  \label{eq: map-Lie-cobracket}
   \delta\big(\check{X}{}^{\,+}_\ia\big)  \; \mapsto \;  \delta\big(x^+_\ia\big)
\end{equation}
 First of all, definitions give
  $$  \delta\big(\check{X}{}^{\,+}_\ia\big)  \;\; = \;\;  \Big( \big(\Delta - \Delta^{\text{op}} \big)\big(\, \check{X}{}^{\,+}_\ia \,\big)  $$
 and, setting  $ \; I_2 \, := \, I^2 \otimes I + I \otimes I^2 \, $,  \,direct computations yield
 $$  \displaylines{
  \Delta\big(\check{X}{}^{\,+}_\ia\big)  \,\; = \;\,  \Big(\, \Delta\big(\,\barX{}^{\,+}_\ia\big) \;\; \text{\rm mod\ } \hbar\,\widetilde{U}_\hbar^{\,\otimes 2} \,\Big) \;\; \text{\rm mod\ } I_2  \,\; =   \hfill  \cr
  = \,  \bigg(\! \Big(\, \barX{}^{\,+}_\ia \otimes 1 + \barK_\ia \otimes \barX{}^{\,+}_\ia \, + \hskip-6pt {\textstyle \sum\limits_{\;\ia = \ib \oplus \ic}} \hskip-4pt \ca{\ib}{, \,\ia} \, \qcs{\ic}{\ib}{+} \, \barK_\ic \, \barX{}^{\,+}_\ib \otimes \barX{}^{\,+}_\ic \Big) \; \text{\rm mod\ } \hbar\,\widetilde{U}_\hbar^{\,\otimes 2} \bigg) \; \text{\rm mod\ } I_2  \, =  \cr
  \hfill   = \;\,  \check{X}{}^{\,+}_\ia \otimes 1 \, + \, \big( 1 + \check{\varXi}_\ia / 2 \big) \otimes \check{X}{}^{\,+}_\ia \, + \, {\textstyle \sum_{\ia = \ib \oplus \ic}} \, \ca{\ib}{,\,\ia} \, \qcs{\ic}{\ib}{+} \, \check{X}{}^{\,+}_\ib \otimes \check{X}{}^{\,+}_\ic  }  $$
 whence one gets (exploiting a careful analysis   --- that is also necessary to prove the parallel statement when proving part  \textit{(5)\/}  of  Theorem \ref{thm: Hopf-str_Uhgx}  ---   of the values of the coefficients  $ \ca{\ib}{,\,\ia} $'s  and  $ \qcs{\ic}{\ib}{+} $'s,
   as in  \cite{appel-sala-20})  that\footnote{\,Hereafter we write  $ \; a \wedge b \, := \, (ab-ba)\big/2 \; $  for any elements  $ \, a, b \, $  in any algebra  $ A \, $.}
  $$  \displaylines{
   \quad   \delta\big(\check{X}{}^{\,+}_\ia\big)  \; = \;  \big(\Delta - \Delta^{\text{op}} \big)\big(\check{X}{}^{\,+}_\ia\big)  \; = \;  \check{\varXi}_\ia \wedge \check{X}{}^{\,+}_\ia \, + \, {\textstyle \sum_{\ia = \ib \oplus \ic}} \, \ca{\ib}{, \,\ia} \, \qcs{\ic}{\ib}{+} \, 2 \, \check{X}{}^{\,+}_\ib \!\wedge \check{X}{}^{\,+}_\ic  \;\; \mapsto   \hfill  \cr
   \hfill   \mapsto \;\;  \xi_\ia \wedge x^{\,+}_\ia \, + \, {\textstyle \sum_{\ia = \ib \oplus \ic}} \, \ca{\ib}{, \,\ia} \, x^{\,+}_\ib \!\wedge x^{\,+}_\ic  \; = \;  \delta\big(x^+_\ia\big)   \quad  }  $$
 so that  \eqref{eq: map-Lie-cobracket}  is proved.
\end{proof}

\medskip

\subsection{Formal QDP for  $ \uhgx \, $:  the intrinsic recipe}
 In the previous subsection, we introduced a subalgebra  $ \utildehgx $  of  $ \uhgx \, $,  then we proved that it is actually a Hopf algebra, and even a QFSHA that is a quantization of  $ G_X^* \, $,  the formal Poisson group dual to  $ \g_X \, $.  In this sense, we have realized the QDP for the QUEA  $ \uhgx \, $.
                                                            \par
   The explicit construction of  $ \uhgx $  is inspired by similar constructions for the standard, well-known \textsl{polynomial\/}  QUEA  $ U_q(\g) $  associated with a Kac-Moody algebra  $ \g $  of finite or affine type following Drinfeld and Jimbo  (cf.\ \cite{gavarini_PJM-98,gavarini-00},  and references therein): in these cases, the explicit construction amounts to taking, within the given QUEA, the subalgebra generated by renormalized lifts of vectors in  $ \g \; \big( \subseteq U(\g) \big) \, $.  In the present framework one also has the additional advantage that  $ \uhgx $  is already endowed, by definition, with suitable built-in ``quantum root vectors''  $ X_\ia^\pm $  that are lifts of the ``root vectors''  $ x_\ia^\pm $  in  $ \g_X \, $:  this makes things easier than in the case of finite or affine Kac-Moody  $ \g \, $.
                                                            \par
   It is explained in  \cite{gavarini-02}  that even the general construction provided by the QDP in its full extent can still be realized in this way: however, there exists no way whatsoever for making this rough idea into a precise recipe.  On the other hand, the core formulation of the QDP for QUEA does provide an explicit recipe for the like of  $ \utildehgx \, $,  given in intrinsic terms.  We shall now show that the same definition makes sense for the QUEA  $ \uhgx \, $,  and in fact provides a different realization of the same Hopf subalgebra  $ \utildehgx $  that we considered above.

\vskip11pt

\begin{definition}  \label{def: U'hgx - BIS}
 \textsl{(cf.\ Definition \ref{def: Drinfeld's functors}\textit{(a)})}
 Let  $ \uhgx $  be as in  \S \ref{subsec: quantiz_cont-KM-bialg's},  and let also  $ \, \iota : \kh \relbar\joinrel\longrightarrow \uhgx \, $  and  $ \, \epsilon : \uhgx \relbar\joinrel\longrightarrow \kh \, $  be its unit and counit maps; moreover, for every  $ \, n \in \N \, $  set  $ \, \delta_n := (\id - \iota \circ \epsilon) \circ \Delta^{(n)} \, $.  Then we define
  $$  {\uhgx}'  \; := \;  \Big\{\, \eta \in \uhgx \,\Big|\; \delta_n(\eta) \in \hbar^n\,{\uhgx}^{\otimes n} \;\; \forall \; n \in \N \,\Big\}  $$
\end{definition}

\vskip1pt

   The key point now is our next result:

\vskip11pt

\begin{theorem}  \label{thm: uhgx'=utildehgx}
 With notation as before, we have  $ \,\; \utildehgx \, = \, {\uhgx}' \;\, $.
\end{theorem}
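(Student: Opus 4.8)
The plan is to play the two descriptions of Drinfeld's dual against each other: on one side the intrinsic definition of $ {\uhgx}' $ through the conditions $ \delta_n(\eta)\in\hbar^n\,\uhgx^{\otimes n} $ (Definition~\ref{def: U'hgx - BIS}), on the other the generators-and-completion description furnished by Lemma~\ref{lemma: generation-ughx'-by-x'_i}. Everything reduces to a single point: showing that the generators $ \barXi_\alpha=\hbar\,\varXi_\alpha $ and $ \barX{}_\alpha^{\,\pm}=\big(q-q^{-1}\big)X^\pm_\alpha $ of $ \utildehgx $ (Definition~\ref{def: Utildehgx}) actually lie in $ {\uhgx}' $. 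Granting this, both inclusions follow. Since $ {\uhgx}' $ is (invoking Remark~\ref{rem: inf-dim case}) a Hopf subalgebra of $ \uhgx $ that is complete for the relevant topology, it contains the $ I_\hbar $--adic completion of the subalgebra generated by those elements, giving $ \utildehgx\subseteq{\uhgx}' $. Conversely, the very same generators have exactly the shape required by Lemma~\ref{lemma: generation-ughx'-by-x'_i}: they are $ \hbar $ times a lift of counit zero, since $ \barX{}_\alpha^{\,\pm}=\hbar\cdot\big(\hbar^{-1}(q-q^{-1})X^\pm_\alpha\big) $ with $ \hbar^{-1}(q-q^{-1})\in 1+\hbar\,\kh $ a unit, and their reductions modulo $ \hbar $ are $ \xi_\alpha,x^\pm_\alpha $. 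Hence Lemma~\ref{lemma: generation-ughx'-by-x'_i} identifies $ {\uhgx}' $ with the completion of the subalgebra these elements generate, i.e.\ with $ \utildehgx $.

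A point I would exploit here is the feature, peculiar to continuous Kac--Moody algebras and stressed in the Introduction, that $ \xi_\alpha,x^\pm_\alpha $ already \emph{span} $ \g_X $ as a $ \Bbbk $--vector space (the bracket of two root vectors being again a root vector). A $ \Bbbk $--basis of $ \g_X $ can thus be extracted from $ \{\xi_\alpha\}\cup\{x^+_\alpha\}\cup\{x^-_\alpha\} $, the only linear dependencies being the Cartan relations $ \xi_{\alpha\oplus\beta}=\delta_{\alpha\oplus\beta}(\xi_\alpha+\xi_\beta) $; applying Lemma~\ref{lemma: generation-ughx'-by-x'_i} to the corresponding subfamily of generators yields $ {\uhgx}' $, while the redundant Cartan generators are recovered from the parallel relations in $ \utildehgx $ (Proposition~\ref{prop: pres-utildehgx}), so that the two completions coincide and the matching of the $ I_\hbar $--adic and $ I''_\hbar $--adic topologies is routine.

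The genuinely computational step is the verification that $ \barX{}_\alpha^{\,\pm}\in{\uhgx}' $. For the Cartan generators it is immediate: $ \varXi_\alpha $ is primitive, so $ \delta_1(\barXi_\alpha)=\hbar\,\varXi_\alpha\in\hbar\,\uhgx $ and $ \delta_n(\barXi_\alpha)=0 $ for $ n\geq 2 $. For $ \barX{}_\alpha^{\,+} $ I would prove, by induction on $ n $ and simultaneously for \emph{all} intervals, the sharper estimate $ \delta_n\big(X^+_\alpha\big)\in\hbar^{\,n-1}\,\uhgx^{\otimes n} $, whence $ \delta_n\big(\barX{}_\alpha^{\,+}\big)=\big(q-q^{-1}\big)\delta_n\big(X^+_\alpha\big)\in\hbar^n\,\uhgx^{\otimes n} $ because $ q-q^{-1}\in\hbar\,\kh $. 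The inductive step rests on the recursion $ \delta_n=(\delta_{n-1}\otimes\delta_1)\circ\Delta $ fed with the explicit coproduct of Theorem~\ref{thm: Hopf-str_Uhgx}: writing $ K_\alpha=1+(K_\alpha-1) $ with $ K_\alpha-1\in\hbar\,\uhgx $, the contribution $ K_\alpha\otimes X^+_\alpha $ has its primitive part killed by $ \delta_{n-1}(1)=0 $ and its remainder supplies the full factor $ \hbar^{\,n-1} $ through $ \delta_{n-1}(K_\alpha-1)=(K_\alpha-1)^{\otimes(n-1)} $, while each summand $ K_\gamma X^+_\beta\otimes X^+_\gamma $ carries the factor $ q-q^{-1}\in\hbar\,\kh $ and, by the inductive hypothesis together with the fact that $ K_\gamma $ is grouplike (hence of $ \delta $--degree $ 0 $) and the multiplicativity of the $ \delta $--degree filtration of \cite{gavarini-02}, contributes $ \delta_{n-1}(K_\gamma X^+_\beta)\in\hbar^{\,n-2}\,\uhgx^{\otimes(n-1)} $; altogether every resulting tensor lands in $ \hbar^{\,n-1}\,\uhgx^{\otimes n} $. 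The case of $ \barX{}_\alpha^{\,-} $ is entirely symmetric.

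The \textbf{main obstacle} I expect is topological rather than algebraic: the coproduct sums $ \sum_{\alpha=\beta\oplus\gamma} $ are in general infinite (indeed ``continuous''), so the term-by-term $ \hbar $--degree estimates above must be shown to survive passage to the completed sum in which $ \Delta $ genuinely takes values (the ``weak \& $ \hbar $--adic'' completion of Theorem~\ref{thm: Hopf-str_Uhgx}). Making the $ \delta $--degree filtration and its multiplicativity precise in this completed, infinite-dimensional framework, and checking that $ {\uhgx}' $ is closed for the topology defining $ \utildehgx $ so that the completion stays inside it, is exactly the place where the adaptations sanctioned by Remark~\ref{rem: inf-dim case} are indispensable; once these are in place, the identification $ \utildehgx={\uhgx}' $ follows as described.
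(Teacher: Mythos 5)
Your proposal is correct and follows essentially the same route as the paper's own proof: one inclusion by verifying that $\barXi_\alpha$ and $\barX{}_\alpha^{\,\pm}$ lie in $\uhgx'$ (which the paper dispatches with ``by direct check'' and you usefully expand into the inductive estimate $\delta_n\big(X^\pm_\alpha\big)\in\hbar^{\,n-1}\,\uhgx^{\otimes n}$), and the converse inclusion by applying Lemma~\ref{lemma: generation-ughx'-by-x'_i} to the spanning set $\big\{x^+_\alpha,\xi_\alpha,x^-_\alpha\big\}$ of $\g_X$, with the same observations about the relations $\xi_{\alpha\oplus\beta}=\delta_{\alpha\oplus\beta}(\xi_\alpha+\xi_\beta)$ being harmless and about the infinite-dimensional adaptations sanctioned by Remark~\ref{rem: inf-dim case}. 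Your closing caveat on convergence of the continuous coproduct sums in the ``weak \& $\hbar$--adic'' completion is exactly the technical point the paper absorbs into its appeal to that remark, so nothing essential is missing or different.
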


\begin{proof}
 By direct check, one finds at once that  $ \; \varXi_\ia \, , X_\ia^\pm \not\in {\uhgx}' \; $  but  $ \; \hbar\,\varXi_\ia \, , \hbar\,X_\ia^\pm \in {\uhgx}' \; $;  \,hence, once we note that  $ \, \big(\, q - q^{-1} \big) = \hbar \, \kappa \; $  with  $ \kappa $  an invertible element in  $ \kh \, $,  \,we get
 $ \; \barXi_\ia \, , \barX{}_\ia^\pm \in {\uhgx}' \; $  for all  $ \, \ia \in \intsf(X) \, $.
 Moreover, by the properties of the maps  $ \delta_n \, (\, n \in \N \,) \, $  explained in  \cite{gavarini-02},  one sees easily that  $ \, {\uhgx}' \, $  is a unital  $ \kh $--subalgebra  of  $ \uhgx \, $.  Therefore, we conclude that  $ \; \utildehgx \subseteq {\uhgx}' \; $.
                                                                \par
   As to the converse inclusion, first note that  $ \, \mathbb{B} := {\big\{ x_\ia^+ \, , \xi_\ia \, , x_\ia^- \big\}}_{\ia \in \intsf(X)} \, $  is a  $ \Bbbk $--spanning  set for  $ \g_X \, $:  in fact, it falls short from being a  $ \Bbbk $--basis  only because of the relations
%
%
 $ \; \xi_{\alpha \oplus \beta} \, = \, \delta_{\alpha \oplus \beta} (\xi_\alpha + \xi_\beta) \; $   --- for  $ \, \alpha , \beta \in \intsf(X) \, $  ---   but this will not affect our argument hereafter.  Now, the elements  $ \varXi_\ia $'s  and  $ X_\ia^\pm $'s  (for all  $ \, \alpha \in \intsf(X) \, $)  are lifts of the  $ \xi_\ia $'s  and  $ x_\ia^- $'s  (respectively) in  $ \, \textsl{Ker\/}\big(\uhgx\big) \cap\, {\uhgx}' \; $;  \,then  Lemma \ref{lemma: generation-ughx'-by-x'_i}  applies   --- even in the present, slightly modifed and infinite-dimensional setting ---   so that  $ {\uhgx}' $  is indeed the algebra of ``formal series''
  $$  {\uhgx}'  \, = \;  \Bbbk\big[\big[ {\big\{ X_\ia^+ \, , \varXi_\ia \, , X_\ia^- \big\}}_{\ia \in \intsf(X)} \cup \{\hbar\} \big]\big]  \; \subseteq \;  \utildehgx  $$
 whence (by the first part of the proof) we get  $ \; \utildehgx = {\uhgx}' \; $,  \;q.e.d.
\end{proof}

\bigskip

\section{QDP for polynomial quantum continuous KM-algebras}
 In this section we provide a ``polynomial'' version of the QDP, i.e.\ a version that applies to a ``polynomial'' QUEA, much in the spirit of  \cite{gavarini-07}  instead of  \cite{gavarini-02}.  Like in  \S \ref{sec: QDP-formal},  we cannot directly apply the results in  \cite{gavarini-07},  but we rather have to cook up a suitable recipe that might fit the present case; this will be done by following closely the pattern offered by  \cite{beck-kac}   --- see also  \cite[Theorem 6.3]{gavarini-00}  for an alternative approach.

\medskip

\subsection{The polynomial QUEA  $ \uqgx \, $}  \label{subsec: polyn-QUEA}
 Besides the original formulation by Drinfeld (cf.\ \cite{drinfeld-quantum-groups-87}  and  \cite{gavarini-02})  in the framework of ``formal'' quantization, another version of the QDP was developed in  \cite{gavarini-07}  for the setup of ``polynomial'' quantization, that is for genuine Hopf algebras over some ground ring  $ R $  that for at some special quotient of  $ R $  itself specialize (in a suitable sense) to a the universal enveloping algebra of some Lie (bi)algebra.
                                           \par
   In this spirit, our first step is to ``extract'' from ``formal'' QUEA  $ \uhgx $  a ``polynomial'' QUEA in the previous sense, as follows:

\medskip

\begin{definition}  \label{def: polyn_Uqgx}
 Let  $ \, q^{\pm 1} := \exp\big(\!\pm\hbar/2\big) \in \kh \, $,  \,and let  $ \kqqm $  be the corresponding  $ \Bbbk $--algebra  of Laurent polynomials in  $ q $  embedded into  $ \kh \, $.  Given a (formal) QUEA  $ \uhgx $  as in  \S \ref{subsec: quantiz_cont-KM-bialg's},  we define the  \textsl{polynomial QUEA}  $ \uqgx $
   as being\,\footnote{Indeed, a more ``refined'' definition is possible; the present one is the simplest possible (carefully choosing normalizations), which is enough for our present scopes.}
 the unital  $ \kqqm $--subalgebra  of  $ \uhgx $  generated by the elements
  $$  \dot{K}_\alpha^{\pm 1} := K_\alpha^{\pm 1} \;\; ,  \quad
     \dot{H}_\alpha := {{\, K_\alpha - 1 \,} \over {\, q - 1 \,}}  \;\; ,  \quad
    \dot{X}_\alpha^\pm := \big( 1 + q^{-1} \big) X_\alpha^\pm   \eqno \forall \;\; \ia \in \intsf(X)  \quad  $$
   \indent   Similarly, we also consider the unital  $ \kqqm $--subalgebras  $ \uqbxpm $  in  $ \uhgx $   --- hence in $ \uqgx $  ---   generated by the  $ \qxz{\ia}{} $'s  and the  $ \qxpm{\ia} $'s  (for  $ \, \ia \in \intsf(X) \, $).
\end{definition}

\medskip

   The following result yields a description of  $ \uqgx \, $:

\medskip

\begin{proposition}  \label{prop: Hopf-str_Uqgx}
 Let  $ \calQ_X $  be a continuous quiver and  $ \, \uqgx $  the corresponding  $ \kqqm $--algebra  given in Definition \ref{def: polyn_Uqgx}  above.  Then:
 \vskip5pt
   (1)\,  $ \uqgx $  has the following presentation: it is the unital, associative  $ \kqqm $--algebra  with generators  $ \dot{K}_\alpha^{\pm 1} $,  $ \dot{H}_\alpha $  and  $ \dot{X}_\alpha^\pm $  (for all  $ \, \ia \in \intsf(X) \, $)  and relations
 (for all  $ \, \ia , \ib \in \intsf(X) \, $,  with the additional constraint  $ \, (\ia,\ib) \in \serre{X} \, $  for the last relation)
%
  $$  \displaylines{
   \dot{K}_\alpha^{\pm 1} \, \dot{K}_\beta^{\pm 1} = \, \dot{K}_\beta^{\pm 1} \, \dot{K}_\alpha^{\pm 1}  \;\; ,
\quad   \dot{K}_\alpha^{\pm 1} \, \dot{K}_\beta^{\mp 1} = \, \dot{K}_\beta^{\mp 1} \, \dot{K}_\alpha^{\pm 1}  \;\; ,  \quad
  \dot{K}_\alpha^{\pm 1} \, \dot{K}_\alpha^{\mp 1} = \, 1  \cr
   \dot{K}_{\alpha \oplus \beta}^{\pm 1}  = \,  \delta_{\alpha \oplus \beta} \, \dot{K}_\alpha^{\pm 1} \dot{K}_\beta^{\pm 1}  \quad ,  \quad \qquad
  \dot{K}_\alpha  \, = \,  1 \, + \, (\,q - 1) \, \dot{H}_\alpha  \cr
   \dot{H}_{\alpha \oplus \beta}  = \,  \delta_{\alpha \oplus \beta} \big( \dot{H}_\alpha \dot{K}_\beta + \dot{H}_\beta \big)  \quad ,  \quad \qquad  \dot{H}_\alpha \, \dot{H}_\beta \, = \, \dot{H}_\beta \, \dot{H}_\alpha  \cr
%
   \dot{K}_\alpha^{+1} \, \dot{X}_\beta^\pm \, \dot{K}_\alpha^{-1} \, = \; q^{\pm(\alpha|\,\beta)} \dot{X}_\beta^\pm  \quad ,   \qquad
  \dot{H}_\alpha \, \dot{X}_\beta^\pm \, - \, q^{\pm(\alpha|\,\beta)} \dot{X}_\beta^\pm \, \dot{H}_\alpha  \; = \; {\big(\!\pm(\alpha|\,\beta)\big)}_{\!q} \, \dot{X}_\beta^\pm
 }  $$
  $$  \displaylines{
   \dot{X}^+_{\ia} \, \dot{X}^-_{\ib} - \dot{X}^-_{\ib} \, \dot{X}^+_{\ia}  \,\; = \;\,  \delta_{\alpha{}\beta} \, \big(\, 1 + q^{-1} \big) \big(\, 1 + \dot{K}_\alpha^{-1} \big) \, \dot{H}_\alpha  \; +
\; \ca{\ia}{\ib} \, \big(\, 1 + q^{-1} \big) \, \times   \hfill  \cr
   \hfill   \times \, \left( q^{\qcc{\ia}{\ib}{+}} \dot{X}^+_{\ia\ominus\ib} \dot{K}_{\ib}^{\,\ca{\ia}{\ib}} \! - q^{\qcc{\ia}{\ib}{-}} \dot{K}_{\ia}^{\,\ca{\ia}{\ib}} \dot{X}^-_{\ib\ominus\ia} \right)  +  \qcb{\ib}{\ia}{} q^{\qcb{\ib}{\ia}{}} \big( q - q^{-1} \big) \dot{X}^+_{(\iM{\ia}{\ib})\ominus\ib} \, \dot{K}_{\im{\ia\, }{\, \ib}}^{\,\qcb{\ia}{\ib}{}} \dot{X}^-_{(\iM{\ia}{\ib})\ominus{\ia}}  \cr
   \dot{X}^\pm_{\ia} \, \dot{X}^\pm_{\ib} \, - \,  q^{\qcr{\ia}{\ib}{}} \dot{X}^\pm_{\ib} \, \dot{X}^\pm_{\ia}  \; = \;  \pm \, \qcb{\ia}{\ib}{} \, q^{\qcs{\ia}{\ib}{\pm}} \, \big(\, 1 + q^{-1} \big) \, \dot{X}^\pm_{\ia\oplus\ib} \, + \, \qcb{\ia}{\ib}{} \big( q - q^{-1} \big) \, \dot{X}^\pm_{\iM{\ia}{\ib}} \, \dot{X}^\pm_{\im{\ia\, }{\,\ib}}  }  $$
 where
 $ \; \displaystyle{ {\big(\!\pm(\alpha|\,\beta)\big)}_{\!q} := {{\; q^{\pm(\alpha|\,\beta)} - 1 \;} \over {\,q - 1\,}} } \in \kqqm \, $
 following standard  $ q $--number  notation and for the rest we use notation as in  \S \ref{subsec: quantiz_cont-KM-bialg's}.
 \vskip5pt
   (2)\,  $ \uqgx $  is Hopf subalgebra (over  $ \kqqm $)  of  $ \, \uhgx \, $,  \,with
  $$  \displaylines{
 \varepsilon\big(\dot{K}_\alpha^{\pm 1}\big) = 1  \; ,   \;\;  \varepsilon\big(\dot{H}_\alpha\big) = 0  \; ,   \;\;   \Delta\big(\dot{K}_\alpha^{\pm 1}\big) \, = \, \dot{K}_\alpha^{\pm 1} \otimes \dot{K}_\alpha^{\pm 1}  \; ,   \;\;   \Delta\big(\dot{H}_\alpha\big) \, = \, \dot{H}_\alpha \otimes \dot{K}_\alpha^{+1} + 1 \otimes \dot{H}_\alpha  \cr
   \epsilon\big(\dot{X}_\alpha^+\big) = \, 0  \; ,   \;\;  \Delta\big(\dot{X}^+_{\ia}\big)  = \,  \dot{X}^+_{\ia} \otimes 1 + \dot{K}_\ia \otimes \dot{X}^+_{\ia} + \hskip-5pt {\textstyle \sum\limits_{\ia = \ib \oplus \ic}} \hskip-3pt \ca{\ib}{, \,\ia} \, \qcs{\ic}{\ib}{+} \, (\,q-1) \, \dot{K}_\ic^{+1} \dot{X}^+_{\ib} \otimes \dot{X}^+_{\ic}  \cr
   \epsilon\big(\dot{X}_\alpha^-\big) = \, 0  \; ,   \;\;  \Delta\big(\dot{X}^-_{\ia}\big)  = \,  1 \otimes \dot{X}^-_{\ia} + \dot{X}^-_{\ia} \!\otimes \dot{K}_\ia^{-1} - \hskip-7pt {\textstyle \sum\limits_{\ia = \ib \oplus \ic}} \hskip-3pt \ca{\ib}{, \,\ia} \, \qcs{\ic}{\ib}{-} \, (\,q-1) \, \dot{X}^-_{\ib} \otimes \dot{X}^-_\ic \dot{K}_\ib^{-1}  }  $$
 while the antipode is given by the formula
 $ \; S := \sum_n m^{(n)} \circ {(\id - \iota \circ \epsilon)}^{\otimes n} \circ \Delta^{(n)} \; $.
 \vskip5pt
   (3)\,  $ \uqbxp \, $,  resp.\  $ \uqbxm \, $,  \,is a Hopf  $ \, \kqqm $--subalgebra  of  $ \, \uqgx $   --- hence of  $ \, \uhgx $  as well.
\end{proposition}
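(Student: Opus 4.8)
The plan is to settle the three assertions in order, treating the presentation in part~(1) as the heart of the matter. I would first check that each listed relation \emph{holds} inside $\uqgx\subseteq\uhgx$ by substituting $\dot{K}_\alpha^{\pm1}=K_\alpha^{\pm1}$, $\dot{H}_\alpha=(K_\alpha-1)/(q-1)$ and $\dot{X}_\alpha^\pm=(1+q^{-1})X_\alpha^\pm$ into the defining relations of $\uhgx$ recalled in \S\ref{subsec: quantiz_cont-KM-bialg's} and rescaling. The Cartan relations $\dot{K}_{\alpha\oplus\beta}^{\pm1}=\delta_{\alpha\oplus\beta}\dot{K}_\alpha^{\pm1}\dot{K}_\beta^{\pm1}$, $\dot{K}_\alpha=1+(q-1)\dot{H}_\alpha$ and $\dot{H}_{\alpha\oplus\beta}=\delta_{\alpha\oplus\beta}(\dot{H}_\alpha\dot{K}_\beta+\dot{H}_\beta)$ follow by exponentiating $\varXi_{\alpha\oplus\beta}=\delta_{\alpha\oplus\beta}(\varXi_\alpha+\varXi_\beta)$ and using $K_\alpha=\exp(\hbar\varXi_\alpha/2)$; the mixed relation $\dot{H}_\alpha\dot{X}_\beta^\pm-q^{\pm(\alpha|\beta)}\dot{X}_\beta^\pm\dot{H}_\alpha={\big(\!\pm(\alpha|\beta)\big)}_{\!q}\,\dot{X}_\beta^\pm$ is a one-line computation from $K_\alpha X_\beta^\pm K_\alpha^{-1}=q^{\pm(\alpha|\beta)}X_\beta^\pm$. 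For the two ``root'' relations the only point is bookkeeping of the rescaling factors: the identity $(1+q^{-1})^2/(q-q^{-1})=(1+q^{-1})/(q-1)$ together with $(K_\alpha-K_\alpha^{-1})/(q-1)=(1+\dot{K}_\alpha^{-1})\dot{H}_\alpha$ converts the $\delta_{\alpha\beta}$--term of the $\qxp{\ia}\qxm{\ib}$--relation into the stated one, while in each remaining summand the number of $X$--factors exactly accounts for the power of $(1+q^{-1})$ that survives.

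It remains, for part~(1), to prove that these relations are \emph{exhaustive}, i.e.\ that the abstract $\kqqm$--algebra $A$ presented by the listed generators and relations maps \emph{isomorphically} onto $\uqgx$. Since the relations hold, there is a surjection $A\twoheadrightarrow\uqgx$, so I only need injectivity. My plan is a straightening argument: using the commutation relations one rewrites every element of $A$ as a $\kqqm$--combination of ordered monomials $\big(\text{word in the }\dot{X}^-\big)\big(\text{word in the }\dot{H},\dot{K}^{\pm1}\big)\big(\text{word in the }\dot{X}^+\big)$ for a fixed total order on $\intsf(X)$. Under $A\to\uqgx\hookrightarrow\uhgx$ these normal forms become scalar multiples (by monomials in $(1+q^{-1})$ and $(q-1)^{-1}$) of the corresponding monomials of $\uhgx$, which are $\kh$--linearly independent by the triangular structure recorded in Theorem~\ref{thm: Hopf-str_Uhgx}(4) together with the PBW theorem for $\uhgx$ of \cite{appel-sala-20}; as $\kqqm\hookrightarrow\kh$ this forces $\kqqm$--linear independence of the normal forms, whence the map is injective.

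For part~(2) I would show that $\uqgx$ is stable under the coproduct, counit and antipode of $\uhgx$. The counit is immediate: $\epsilon(\dot{K}_\alpha^{\pm1})=1$, $\epsilon(\dot{H}_\alpha)=(\epsilon(K_\alpha)-1)/(q-1)=0$ and $\epsilon(\dot{X}_\alpha^\pm)=(1+q^{-1})\epsilon(X_\alpha^\pm)=0$. For the coproduct, $\dot{K}_\alpha^{\pm1}$ is group-like and $\Delta(\dot{H}_\alpha)=(K_\alpha\otimes K_\alpha-1\otimes1)/(q-1)=\dot{H}_\alpha\otimes\dot{K}_\alpha^{+1}+1\otimes\dot{H}_\alpha$; for $\dot{X}_\alpha^+$ one rescales the formula of Theorem~\ref{thm: Hopf-str_Uhgx}(1) by $(1+q^{-1})$, and the crucial cancellation is $(q-q^{-1})/(1+q^{-1})=q-1$, which turns the coefficient $(q-q^{-1})$ of the summation term into the stated $(q-1)$ and leaves only $\dot{K},\dot{X}^+$ factors, so that $\Delta(\dot{X}_\alpha^+)$ lies in the relevant completion of $\uqgx\otimes\uqgx$ inherited from $\uhgx$; the case of $\dot{X}_\alpha^-$ is symmetric. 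Closure under the antipode follows by rescaling $S(K_\alpha)=K_\alpha^{-1}$ (so $S(\dot{H}_\alpha)=-\dot{K}_\alpha^{-1}\dot{H}_\alpha\in\uqgx$) and then by induction along the grading of $\intsf(X)$ by interval ``length'', using $S=\sum_n m^{(n)}\circ{(\id-\iota\circ\epsilon)}^{\otimes n}\circ\Delta^{(n)}$ together with the coproduct formulas just established. Once stability under $\Delta,\epsilon,S$ is known, $\uqgx$ is a Hopf $\kqqm$--subalgebra of $\uhgx$.

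Part~(3) is then a direct corollary: the coproduct formulas for $\dot{X}_\alpha^+$ (resp.\ $\dot{X}_\alpha^-$) and for $\dot{K}^{\pm1},\dot{H}$ involve only the generators of $\uqbxp$ (resp.\ $\uqbxm$), as do the counit and, by the inductive argument above, the antipode; hence $\uqbxpm$ are themselves stable under $\Delta,\epsilon,S$ and the Hopf structure of $\uqgx$ restricts to them. The \emph{main obstacle} in the whole argument is the injectivity step in part~(1): everything else is rescaling and bookkeeping, but ruling out ``extra'' relations genuinely requires the normal form together with the $\kqqm$--independence of its image in $\uhgx$. The delicate point there is the Cartan part, where $\dot{H}_\alpha$ and $\dot{K}_\alpha^{\pm1}$ are intertwined by $\dot{K}_\alpha=1+(q-1)\dot{H}_\alpha$ and by $\dot{H}_{\alpha\oplus\beta}=\delta_{\alpha\oplus\beta}(\dot{H}_\alpha\dot{K}_\beta+\dot{H}_\beta)$ descending from $\varXi_{\alpha\oplus\beta}=\delta_{\alpha\oplus\beta}(\varXi_\alpha+\varXi_\beta)$; one must verify that, modulo these, a well-defined $\kqqm$--basis of the commutative Cartan subalgebra survives, so that the global normal form is unambiguous.
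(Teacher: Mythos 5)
Your verification is correct, and in spirit it is exactly what the paper does: the paper's entire ``proof'' of this proposition is the one-line remark that it is ``a sheer exercise of re-writing, just looking at definitions together with Theorem \ref{thm: Hopf-str_Uhgx}'', and your rescaling bookkeeping (the identities $(1+q^{-1})^2/(q-q^{-1})=(1+q^{-1})/(q-1)$, $(K_\alpha-K_\alpha^{-1})/(q-1)=(1+\dot{K}_\alpha^{-1})\dot{H}_\alpha$, and $(q-q^{-1})/(1+q^{-1})=q-1$ for the coproduct) is precisely that rewriting, carried out correctly. Where you go beyond the paper is the exhaustiveness half of part~(1): the paper never addresses why the listed relations give a \emph{presentation}, whereas you supply the standard argument (straightening to ordered normal forms in the abstract algebra $A$, then $\kqqm$--linear independence of their images inside $\uhgx$, using $\kqqm\hookrightarrow\kh$). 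This is legitimate and is in fact the same machinery the paper itself deploys only later, in the proof of Theorem \ref{thm: uqgx'=utildeqgx}: the triangular decomposition \eqref{eq: triang-decomp_Uqgx} and the PBW-type spanning sets, where the Cartan factor $\uqfx$ is spanned by monomials ${\overrightarrow{\prod}_\ia}\big(\dot{H}_\ia\big)^{e^+_\ia}\big(\dot{K}_\ia^{-1}\big)^{e^-_\ia}$ subject to the selection condition $0\in\{e^+_\ia,e^-_\ia\}$ --- which is exactly the resolution of the Cartan ambiguity you flag at the end (the relations $\dot{K}_\ia=1+(q-1)\dot{H}_\ia$ and $\dot{H}_{\ia\oplus\ib}=\delta_{\ia\oplus\ib}(\dot{H}_\ia\dot{K}_\ib+\dot{H}_\ib)$, descending from $\varXi_{\ia\oplus\ib}=\delta_{\ia\oplus\ib}(\varXi_\ia+\varXi_\ib)$, force one to select a genuinely independent family before invoking independence in $\uhgx$; note the paper is careful to claim only a \emph{spanning} set, not a basis, for $\uqfx$). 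Two small caveats on your part~(2): first, ``induction along the grading of $\intsf(X)$ by interval length'' does not literally make sense here, since the decompositions $\ia=\ib\oplus\ic$ form a continuum and lengths are real --- but this is harmless, because the series formula $S=\sum_n m^{(n)}\circ(\id-\iota\circ\epsilon)^{\otimes n}\circ\Delta^{(n)}$ that you also invoke already does the work: each term has tensor components in $\uqgx$ with coefficients in $\kqqm$ by your coproduct formulas. Second, closure under $\Delta$ and $S$ holds only in the same topological/completed sense as in Theorem \ref{thm: Hopf-str_Uhgx} (the sums over decompositions $\ia=\ib\oplus\ic$ converge only in the weak and $\hbar$--adic completion), which you correctly acknowledge and which matches how the proposition itself is stated.
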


\vskip11pt

   The proof of the previous result is a sheer exercise of re-writing, just looking at definitions together with  Theorem \ref{thm: Hopf-str_Uhgx}.  Moreover, claims  \textit{(3)},  \textit{(4)\/}  and  \textit{(5)\/}  of  Theorem \ref{thm: Hopf-str_Uhgx}  induce similar results for  $ \uqbxpm $  as well, but we do not need them.
                                                           \par
   Next result instead explains why we call  $ \uqgx $  a QUEA again, though polynomial; the proof, again, is straightforward, but first we have to set the framework.

\vskip13pt

\begin{definition}  \label{def: polyn_QUEA}
 We define  \emph{(polynomial) quantized universal enveloping algebra (=QUEA) over\/  $ \kqqm $}  any Hopf\/  $ \kqqm $--algebra  $ U_q $  (in the standard sense) such that the quotient Hopf algebra over  $ \; \kqqm \big/ (\,q-1)\,\kqqm \, \cong \, \Bbbk \; $  given by  $ \,\; U_1 \, := \, U_q \big/ (\,q-1)\,U_q \;\, $  is connected and cocommutative   --- or, equivalently,  $ \, U_1 \, $  is isomorphic to an enveloping algebra  $ U(\g) $	for some Lie algebra  $ \g \, $.  For any such QUEA, the formula
 $ \; \displaystyle{ \delta(x) \, := \, \frac{\; \Delta\big(x'\big) - \Delta^{21}\big(x'\big) \;}{q-1} \; \mod (\,q-1)\,U_q^{\,\otimes 2} } \; $
 --- where  $ \, x' \in U_q \, $  is any lift of  $ \, x \in \g \, $  ---   yields a co-Poisson structure on  $ \, U_1 = U(\g) \, $,  \,hence a Lie bialgebra structure on  $ \g \, $.
                                                                      \par
   In this case, we say that  $ U_q $  is a  \emph{(polynomial) quantization\/}  of the co-Poisson Hopf algebra  $ U(\g) \, $,  or (with a slight abuse of language) of the Lie bialgebra  $ \g \, $.
\end{definition}

\vskip7pt

   The previous definition now gives sense to the following result:

\vskip11pt

\begin{theorem}  \label{thm: specializ-Uqgx}
 The Hopf algebra  $ \uqgx $  is a polynomial QUEA, and more precisely it is a polynomial quantization of the Lie bialgebra  $ \g_X \, $,  \,as there is an isomor\-phism  $ \,\; \uqgx \Big/ (\,q-1)\,\uqgx \,\cong\, U(\g_X) \;\, $  of
co-Poisson Hopf algebras given by
  $$  \dot{K}_\alpha^{\pm 1} \mapsto\, 1 \;\; ,  \quad  \dot{H}_\alpha \,\mapsto\, \xi_\alpha \;\; ,  \quad  \dot{X}_\alpha^\pm \,\mapsto\, 2\,x_\alpha^\pm   \qquad \qquad  \forall \;\; \alpha \in \intsf(X)  $$
   \indent   In a similar sense,  $ \uqbxpm $  is a quantization of the Lie bialgebra  $ \b_X^\pm \, $.
\end{theorem}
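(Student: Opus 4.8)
The plan is to verify the claim directly from the presentation of $\uqgx$ given in Proposition \ref{prop: Hopf-str_Uqgx} by specializing at $q=1$. The statement has three parts to confront: (i) that $\uqgx / (q-1)\uqgx$ is isomorphic to $U(\g_X)$ as a Hopf algebra via the stated assignment; (ii) that this is a \emph{polynomial} QUEA in the sense of Definition \ref{def: polyn_QUEA}, i.e.\ that $U_1$ is connected and cocommutative; and (iii) that the induced co-Poisson structure matches the Lie cobracket of $\g_X$ recalled in \S\ref{subsec: cont-KM-bialg's}. First I would set $q=1$ in every defining relation of Proposition \ref{prop: Hopf-str_Uqgx}(1) and record what survives, checking that the images of the generators satisfy exactly the defining relations of $\g_X$ so that a surjective Hopf algebra homomorphism $U(\g_X) \twoheadrightarrow \uqgx/(q-1)\uqgx$ exists, and then arguing it is injective.

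For the algebra structure, the key observation is the behaviour of each generator at $q=1$. Since $\dot{K}_\alpha = 1 + (q-1)\dot{H}_\alpha$, one gets $\dot{K}_\alpha^{\pm 1} \mapsto 1$ immediately, which forces the limit to be built from the $\dot{H}_\alpha$'s and $\dot{X}_\alpha^\pm$'s alone. Writing $\xi_\alpha := \dot{H}_\alpha \bmod (q-1)$ and $2x_\alpha^\pm := \dot{X}_\alpha^\pm \bmod (q-1)$, I would check relation by relation: the relation $\dot{H}_{\alpha\oplus\beta} = \delta_{\alpha\oplus\beta}(\dot{H}_\alpha \dot{K}_\beta + \dot{H}_\beta)$ collapses to $\xi_{\alpha\oplus\beta} = \delta_{\alpha\oplus\beta}(\xi_\alpha + \xi_\beta)$; the relation $\dot{H}_\alpha \dot{X}_\beta^\pm - q^{\pm(\alpha|\beta)}\dot{X}_\beta^\pm \dot{H}_\alpha = (\pm(\alpha|\beta))_q \dot{X}_\beta^\pm$ reduces, since $(\pm(\alpha|\beta))_q \to \pm(\alpha|\beta)$, to $[\xi_\alpha, x_\beta^\pm] = \pm(\alpha|\beta)x_\beta^\pm$. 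The $[\dot{X}^+,\dot{X}^-]$ relation requires care: the prefactor $(1+q^{-1})$ tends to $2$, the term $\delta_{\alpha\beta}(1+\dot{K}_\alpha^{-1})\dot{H}_\alpha$ tends to $2\delta_{\alpha\beta}\xi_\alpha$, and the final term carrying $(q-q^{-1})$ vanishes at $q=1$; dividing through by the overall factor $4$ recovers exactly $[x_\alpha^+, x_\beta^-] = \delta_{\alpha\beta}\xi_\alpha + \ca{\ia}{\ib}(x_{\alpha\ominus\beta}^+ - x_{\beta\ominus\alpha}^-)$. The Serre-type relation likewise loses its $(q-q^{-1})$ strict-union/intersection term and yields $[x_\alpha^\pm, x_\beta^\pm] = \pm\,\ca{\ia}{,\,\ia\oplus\ib}\,x_{\alpha\oplus\beta}^\pm$ after matching coefficients.

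For the coalgebra structure I would specialize the coproducts in Proposition \ref{prop: Hopf-str_Uqgx}(2): since each sum carries an explicit factor $(q-1)$, at $q=1$ we get $\Delta(\xi_\alpha) = \xi_\alpha \otimes 1 + 1 \otimes \xi_\alpha$ and $\Delta(x_\alpha^\pm)$ primitive up to the $\dot{K}$-tail, which becomes $x_\alpha^+ \otimes 1 + 1 \otimes x_\alpha^+$ and similarly for $x_\alpha^-$; this shows $U_1$ is cocommutative and, being generated by primitives, connected, so it is a genuine QUEA and by the universal property the surjection above is an isomorphism $U(\g_X) \cong \uqgx/(q-1)\uqgx$. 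Finally, for the co-Poisson structure I would apply the formula $\delta(x) = (q-1)^{-1}(\Delta - \Delta^{21})(x') \bmod (q-1)$ to the lift $x' = \dot{X}_\alpha^+$, extracting the order-$(q-1)$ term of $\Delta(\dot{X}_\alpha^+) - \Delta^{21}(\dot{X}_\alpha^+)$; the antisymmetrized $\dot{K}$-tail contributes the wedge $\xi_\alpha \wedge x_\alpha^+$ and the summation contributes $\sum_{\beta\oplus\gamma=\alpha}\ca{\ib}{,\,\ia}\,x_\beta^+ \wedge x_\gamma^+$, matching the cobracket of \S\ref{subsec: cont-KM-bialg's} exactly as in the computation of \eqref{eq: map-Lie-cobracket} in Theorem \ref{thm: QDP x Uhgx}. \textbf{The main obstacle} I anticipate is not any single relation but the bookkeeping of the $q$-dependent scalar coefficients $\qcc{\ia}{\ib}{\pm}$, $\qcs{\ia}{\ib}{\pm}$, $\qcb{\ia}{\ib}{}$ and the normalizing factors $(1+q^{-1})$ versus $(q-q^{-1})$: one must confirm that all terms decorated by $(q-q^{-1})$ genuinely drop out at $q=1$ while those with $(1+q^{-1})$ survive with the correct limiting constant, so that the rescalings $\dot{X}_\alpha^\pm = (1+q^{-1})X_\alpha^\pm$ and $\dot{H}_\alpha = (K_\alpha-1)/(q-1)$ produce precisely the factor-of-$2$ normalization $\dot{X}_\alpha^\pm \mapsto 2x_\alpha^\pm$ stated in the theorem. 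Injectivity of the surjection is then secured by the PBW-type spanning already available for $\g_X$ from \S\ref{subsec: cont-KM-bialg's}, since both sides have matching graded pieces.
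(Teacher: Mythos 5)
Your proposal is correct and follows essentially the same route as the paper, whose proof of this theorem consists of the single remark that the claim ``is a straightforward consequence of the very presentation of $\uqgx$ given in Proposition \ref{prop: Hopf-str_Uqgx}''; your relation-by-relation specialization at $q=1$ is exactly the verification that remark leaves implicit, and your coefficient checks (the $(q-q^{-1})$ terms dying, the $(1+q^{-1})$ terms surviving to give the factor $2$, and $\qcb{\ia}{\ib}{}=\ca{\ia}{,\,\ia\oplus\ib}$) match the computations the paper carries out in the analogous Theorems \ref{thm: QDP x Uhgx} and \ref{thm: QDP x Uqgx}. One small simplification: your final appeal to PBW spanning sets for injectivity is unnecessary, since specializing a presentation $F/R$ at $q=1$ yields $F_1/R_1$ directly, and the specialized relations (with $\dot K_\alpha^{\pm 1}\mapsto 1$) are verbatim a presentation of $U(\g_X)$, so the isomorphism is automatic.
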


\begin{proof}
 This is a straightforward consequence of the very presentation of  $ \uqgx $  given in  Proposition \ref{prop: Hopf-str_Uqgx}  above.
\end{proof}

\vskip13pt

\begin{Observation}  \label{obs: uqgx over Zqqm}
 It is worth remaking that in the formulas occurring in  Proposition \ref{prop: Hopf-str_Uqgx}  \textsl{all coefficients that show up actually belong to the (sub)ring\/}  $ \Z\big[\,q,q^{-1}\big] \, $.  This implies that the Hopf algebra  $ \uqgx $  can be defined  \textsl{over\/}  $ \Z\big[\,q,q^{-1}\big] \, $,  \,hence one can go and consider  $ R $--integral  forms of it over any ring  $ R \, $,  \,then possibly look for specializations at roots of unity, etc.\   --- much like in the study of quantum groups by Lusztig and others, including (in an infinite dimensional setting, such as is that of  $ \g_X $)  the study in \cite{gavarini-00}  for the case of quantum  \textsl{dual\/}  affine Kac--Moody algebras.
\end{Observation}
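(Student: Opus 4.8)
The plan is to establish the claim by a single, finite inspection of the presentation of $\uqgx$ recorded in Proposition \ref{prop: Hopf-str_Uqgx}: I would read off, one at a time, every scalar coefficient that appears either in the defining relations or in the formulas for $\Delta$, $\varepsilon$ and $S$, and confirm that each of them already lies in the subring $\Z\big[\,q,q^{-1}\big]$ of $\kqqm$. Having done so, I would observe that the very computations establishing Proposition \ref{prop: Hopf-str_Uqgx} involve only coefficients in $\Z\big[\,q,q^{-1}\big]$, so that they remain valid verbatim over that smaller ring; this is what exhibits a Hopf $\Z\big[\,q,q^{-1}\big]$--algebra $U_q^{\Z}(\g_X)$, defined by the same presentation and structure maps, with $\, \kqqm \otimes_{\Z[q,q^{-1}]} U_q^{\Z}(\g_X) \cong \uqgx \,$.

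For the coefficient bookkeeping, the starting point is that both bilinear forms on $\intsf(X)$ are integer valued, $\, \rbf{\ia}{\ib} , \abf{\ia}{\ib} \in \Z \,$ for all $\, \ia , \ib \in \intsf(X) \,$; hence the structure constants $\, \ca{\ia}{\ib} = {(-1)}^{\abf{\ia}{\ib}} \rbf{\ia}{\ib} \,$ are integers, and so are $\, \qcb{\ia}{\ib}{} = \ca{\ia}{,\, \iM{\ia}{\ib}} \,$ and $\, \qcr{\ia}{\ib}{} \,$ (a product of integers), while the half--sums $\, \qcc{\ia}{\ib}{\pm} , \qcs{\ia}{\ib}{\pm} \,$ are integers by the parity fact discussed below. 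Granting this, the coefficients of Proposition \ref{prop: Hopf-str_Uqgx} fall into a short list of types, each plainly in $\Z\big[\,q,q^{-1}\big]$: the Kronecker symbols $\, \drc{\ia\ib} , \delta_{\ia\oplus\ib} \in \{0,1\} \,$; the integers $\, \ca{\ia}{\ib} \,$; the Laurent monomials $\, q^{\pm\rbf{\ia}{\ib}} , q^{\qcc{\ia}{\ib}{\pm}} , q^{\qcb{\ia}{\ib}{}} , q^{\qcr{\ia}{\ib}{}} , q^{\qcs{\ia}{\ib}{\pm}} \,$ in $\, q^{\pm 1} \,$; the elementary factors $\, (q-1) , \big( 1 + q^{-1} \big) , \big( q - q^{-1} \big) \,$; and the $q$--number $\, \big( q^{\pm\rbf{\ia}{\ib}} - 1 \big) \big/ (q-1) \,$, which for an integer argument is the usual Gaussian integer $\, 1 + q + \cdots + q^{\,\rbf{\ia}{\ib}-1} \,$ (with the standard convention when the argument is zero or negative). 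The coproduct formulas introduce nothing new, their coefficients being products of the shape $\, \ca{\ib}{,\,\ia}\, \qcs{\ic}{\ib}{\pm}\, (q-1) \,$ times a monomial in the $\dot{K}$'s, while $\varepsilon$ takes only the values $0$ and $1$; so every coefficient in sight is integral.

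With the bookkeeping in place, I would carry out the descent as follows. Since the relations, the Hopf--structure formulas, and all the identities verifying associativity together with the coassociativity, counit and antipode axioms are expressed purely through coefficients in $\Z\big[\,q,q^{-1}\big]$, each such identity continues to hold word for word after the ground ring $\kqqm$ is replaced by $\Z\big[\,q,q^{-1}\big]$. This presents $U_q^{\Z}(\g_X)$ as a genuine Hopf $\Z\big[\,q,q^{-1}\big]$--algebra whose scalar extension to $\kqqm$ returns $\uqgx$; restricting to the Borel generators gives the corresponding integral forms of $\uqbxpm$. From here the announced consequences --- passing to $R$--integral forms over an arbitrary ring $R$ and contemplating specializations at roots of unity, in the spirit of \cite{gavarini-00} --- are immediate.

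The single point that is not purely formal is the integrality of the half--sum exponents $\, \qcc{\ia}{\ib}{\pm} \,$ and $\, \qcs{\ia}{\ib}{\pm} \,$, i.e.\ the assertion that the relevant values of the Euler form have the correct parity, so that these half--sums are honest integers rather than proper half--integers (equivalently, that $\, q^{\qcc{\ia}{\ib}{\pm}} , q^{\qcs{\ia}{\ib}{\pm}} \,$ are Laurent monomials in $q$ and not expressions involving $q^{1/2}$). I expect this to be the only delicate step; it is, however, already subsumed by the validity of Proposition \ref{prop: Hopf-str_Uqgx} over $\kqqm$ --- since $\, q^{1/2} \notin \kqqm \,$ --- and ultimately rests on the parity properties of the Euler form on $\intsf(X)$ established in \cite{appel-sala-schiffmann-18, appel-sala-20}.
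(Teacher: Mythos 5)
Your proposal is correct and follows essentially the same route as the paper: the Observation carries no separate proof there, being justified precisely by the finite inspection you carry out --- checking that every coefficient in the presentation and Hopf-structure formulas of Proposition \ref{prop: Hopf-str_Uqgx} (Kronecker symbols, the integers $\ca{\ia}{\ib}$, Laurent monomials $q^{\text{integer}}$, the factors $(q-1)$, $\big(1+q^{-1}\big)$, $\big(q-q^{-1}\big)$, and the $q$--integers ${\big(\!\pm(\alpha|\,\beta)\big)}_{\!q}$) lies in $\Z\big[\,q,q^{-1}\big]$ --- followed by the routine descent of the presentation to that subring. Your explicit treatment of the only delicate point, the integrality of the half-sum exponents $\qcc{\ia}{\ib}{\pm}$ and $\qcs{\ia}{\ib}{\pm}$, merely makes precise something the paper leaves tacit, and your resolution (it is already forced by the validity of Proposition \ref{prop: Hopf-str_Uqgx} over $\kqqm$, where $q^{1/2}$ is unavailable, and rests on the parity properties of the Euler form from \cite{appel-sala-schiffmann-18, appel-sala-20}) is sound.
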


\medskip

%
%
 \subsection{QDP for  $ \uqgx \, $:  the polynomial version}  \label{subsec: QDP-polyn}
 We are now ready to present our realization of the QDP for the polynomial QUE  $ \, \uqgx $  introduced above.

\medskip

\begin{definition}  \label{def: Utildeqgx}
 Given  $ \uqgx $  as above, we define  $ \utildeqgx $  as follows: it is the unital  $ \kqqm $--subalgebra  of  $ \uqgx $  generated by all the elements
  $$  \barK{}_\alpha^{\,\pm 1} := \, \dot{K}_\alpha^{\pm 1}  \;\; ,  \quad
   \barH_\alpha := \big(\, q - 1 \big) \, \dot{H}_\alpha  \;\; ,  \quad
  \barX{}_\alpha^{\,\pm} := (\,q-1) \, \dot{X}_\alpha^\pm  \quad   \eqno  \forall \;\; \alpha \in \intsf(X)  $$
 Similarly, we define  $ \utildeqbxp \, $,  resp.\  $ \utildeqbxm \, $,  in the same way, but discarding the  $ \barX{}_\alpha^{\,-} $'s,  resp.\ the  $ \barX{}_\alpha^{\,-} $'s  ($ \, \alpha \in \intsf(X) $)  from the set of generators; it follows then that  $ \widetilde{\calU}_q\big(\b_X^\pm\big) $  is a subalgebra of  $ \utildeqgx $  too.
\end{definition}

\vskip7pt

   The following is a direct consequence of definitions along with  Proposition \ref{prop: Hopf-str_Uqgx}:

\vskip11pt

\begin{proposition}  \label{prop: pres-utildeqgx}  {\ }
 \vskip3pt
   $ \utildeqgx $  is a Hopf\/  $ \kqqm $--subalgebra  of\/  $ \uqgx \, $,  \,which admits the following presentation: it is the Hopf\/  $ \kqqm $--algebra  with generators  $ \, K_\alpha^{\pm 1} $,  $ \, \barH_\alpha \, $, $ \, \barX^{\,\pm}_\alpha \, \big(\, \alpha \in \intsf(X) \,\big) \, $  and relations
 ($ \, \ia , \ib \in \intsf(X) \, $,  with  $ \, (\ia,\ib) \in \serre{X} \, $  in the last relation)
%
  $$  \displaylines{
   \barK{}_\alpha^{\,\pm 1} \, \barK{}_\beta^{\,\pm 1} = \, \barK{}_\beta^{\,\pm 1} \, \barK{}_\alpha^{\,\pm 1}  \;\; ,  \qquad
 \barK{}_\alpha^{\,\pm 1} \, \barK{}_\beta^{\,\mp 1} = \, \barK{}_\beta^{\,\mp 1} \, \barK{}_\alpha^{\,\pm 1}  \;\; ,  \qquad
 \barK{}_\alpha^{\,\pm 1} \, \barK{}_\alpha^{\,\mp 1} = \, 1  \cr
   \barK{}_{\alpha \oplus \beta}^{\,\pm 1}  \, = \;  \delta_{\alpha \oplus \beta} \, \barK{}_\alpha^{\,\pm 1} \, \barK{}_\beta^{\,\pm 1}  \quad ,  \qquad \qquad
 \barK{}_\alpha  \, = \;  1 \, + \, \barH_\alpha  \cr
   \barH_{\alpha \oplus \beta}  = \,  \delta_{\alpha \oplus \beta} \big(\, \barH_\alpha \, \barK_\beta + \barH_\beta \big)  \quad ,  \quad \qquad
 \barH_\alpha \, \barH_\beta \, = \, \barH_\beta \, \barH_\alpha  \cr
   \barK{}_\alpha^{\,+1} \, \barX{}_\beta^{\,\pm} \, \barK{}_\alpha^{\,-1} = \, q^{\pm(\alpha|\,\beta)} \, \barX{}_\beta^{\,\pm}  \;\; ,   \;\quad
 \barH_\alpha \, \barX{}_\beta^{\,\pm} - \, q^{\pm(\alpha|\,\beta)} \, \barX{}_\beta^{\,\pm} \, \barH_\alpha  \, = \, (\,q - 1) \, {\big(\!\pm(\alpha|\,\beta)\big)}_{\!q} \, \barX{}_\beta^{\,\pm}  \cr
   \barX{}^{\,+}_\ia \, \barX{}^{\,-}_\ib - \barX{}^{\,-}_\ib \, \barX{}^{\,+}_\ia  \,\; = \;\,  \big(\, q - q^{-1} \big) \bigg( \drc{\ia \ib} \Big(\, \barK{}_\ia \! - \! \barK{}_\ia^{\,-1} \Big) \; +   \hfill  \cr
   \hfill   + \;\, \ca{\ia}{\ib} \left(\, q^{\qcc{\ia}{\ib}{+}} \, \barX{}^{\,+}_{\!\ia\ominus\ib} \, \barK{}_\beta^{\,\ca{\ia}{\ib}} \! - q^{\qcc{\ia}{\ib}{-}} \, \barK{}_\alpha^{\,\ca{\ia}{\ib}} \, \barX{}^{\,-}_{\ib\ominus\ia} \right)  \, + \;  \qcb{\ib}{\ia}{} \, q^{\qcb{\ib}{\ia}{}} \, \barX{}^{\,+}_{(\iM{\ia}{\ib})\ominus\ib} \, \barK_{\im{\ia\, }{\, \ib}}^{\,\qcb{\ia}{\ib}{}} \, \barX^{\,-}_{(\iM{\ia}{\ib})\ominus{\ia}} \bigg)  \cr
   \barX{}^{\,\pm}_{\ia} \, \barX{}^{\,\pm}_{\ib} - \, q^{\qcr{\ia}{\ib}{}} \barX{}^{\,\pm}_{\ib} \, \barX{}^{\,\pm}_{\ia}  \; = \; \big(\, q - q^{-1} \big) \, \qcb{\ia}{\ib}{} \, \Big( \pm q^{\qcs{\ia}{\ib}{\pm}} \, \barX{}^{\,\pm}_{\ia\oplus\ib} \, + \, \barX{}^{\,\pm}_{\iM{\ia}{\ib}} \, \barX{}^{\,\pm}_{\im{\ia\, }{\,\ib}} \,\Big)  }  $$
 where we use notation as before,
%
 \,with Hopf structure given by
  $$  \displaylines{
   \varepsilon\big(\, \barH_\alpha \big) \, = \, 0  \quad ,  \quad \qquad  \varepsilon\Big( \barK{}_\alpha^{\,\pm 1} \Big) = 1  \quad ,  \quad \qquad  \epsilon\big(\,\barX{}^{\,+}_\ia\big) = \, 0  \quad ,  \quad \qquad  \epsilon\big(\,\barX{}^{\,-}_\ia\big) = \, 0  \cr
   \Delta\big(\, \barH_\alpha \big) \, = \, \barH_\alpha \otimes \barK{}_\alpha^{\,+1} \! + 1 \otimes \barH_\alpha  \quad ,   \quad \qquad   \Delta\Big( \barK{}_\alpha^{\,\pm 1} \Big) \, = \, \barK{}_\alpha^{\,\pm 1} \!\otimes \barK{}_\alpha^{\,\pm 1}  \cr
   \Delta\big(\,\barX{}^{\,+}_\ia\big)  \; = \;  \barX{}^{\,+}_\ia \otimes 1 + \barK{}_\ia^{\,+1} \otimes \barX{}^{\,+}_\ia \, + \, {\textstyle \sum_{\ia = \ib \oplus \ic}} \, \ca{\ib}{, \,\ia} \; \qcs{\ic}{\ib}{+} \, \barK{}_\ic^{\,+1} \barX{}^{\,+}_\ib \, \otimes \barX{}^{\,+}_\ic  \cr
   \Delta\big(\,\barX{}^{\,-}_\ia\big)  \, = \,  1 \otimes \barX{}^{\,-}_\ia + \barX{}^{\,-}_\ia \otimes \barK{}_\ia^{\,-1} \, - \, {\textstyle \sum_{\ia = \ib \oplus \ic}} \, \ca{\ib}{, \,\ia} \; \qcs{\ic}{\ib}{-} \, \barX{}^{\,-}_\ib \otimes \barX{}^{\,-}_\ic \, \barK{}_\ib^{\,-1}  }  $$
 while the antipode is given by the formula
 $ \; S := \sum_n m^{(n)} \circ {(\id - \iota \circ \epsilon)}^{\otimes n} \circ \Delta^{(n)} \; $.
 \vskip3pt
   Similarly,  $ \utildeqbxp \, $,  resp.\  $ \utildeqbxm \, $,  is a Hopf\/  $ \kqqm $--subalgebra  of  $ \, \uqbxp \, $,  resp.\  of  $ \, \uqbxm \, $   --- hence of  $ \, \utildeqgx $  as well ---   and it admits an analogous presentation by generators and relations.
\end{proposition}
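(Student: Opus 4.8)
The plan is to transfer the entire structure of $\uqgx$ recorded in Proposition~\ref{prop: Hopf-str_Uqgx} across the rescaling of generators introduced in Definition~\ref{def: Utildeqgx}. Since $\barK{}^{\,\pm 1}_\alpha = \dot{K}_\alpha^{\pm 1}$, $\barH_\alpha = (q-1)\,\dot{H}_\alpha$ and $\barX{}^{\,\pm}_\alpha = (q-1)\,\dot{X}_\alpha^\pm$ differ from the generators of $\uqgx$ only by non-negative powers of the central scalar $(q-1)$, every identity in $\uqgx$ can be rewritten in the barred generators by substituting $\dot{H}_\alpha = (q-1)^{-1}\barH_\alpha$ and $\dot{X}_\alpha^\pm = (q-1)^{-1}\barX{}^{\,\pm}_\alpha$ and then clearing denominators. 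First I would run through the relations of Proposition~\ref{prop: Hopf-str_Uqgx}(1) one by one, multiplying each by the power of $(q-1)$ needed to remove all denominators, and confirm that the surviving coefficients lie in $\kqqm$ (in fact in $\Z[q,q^{-1}]$), so that each relation genuinely holds inside $\utildeqgx$.

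The arithmetic that makes this work is the single identity $(q-1)(1+q^{-1}) = q - q^{-1}$. For example, $\barK_\alpha = 1 + \barH_\alpha$ is immediate, as $\barH_\alpha = (q-1)\dot{H}_\alpha = \dot{K}_\alpha - 1 = \barK_\alpha - 1$; the $\barH$--$\barX$ relation arises by multiplying the corresponding relation of $\uqgx$ by $(q-1)^2$, producing the coefficient $(q-1)\,{\big(\!\pm(\alpha|\,\beta)\big)}_{\!q}$. The $\barX{}^{\,+}\barX{}^{\,-}$-relation likewise needs a factor $(q-1)^2$, after which one uses $\dot{H}_\alpha = (q-1)^{-1}(\barK_\alpha-1)$ together with $(1+\barK_\alpha^{-1})(\barK_\alpha-1) = \barK_\alpha - \barK_\alpha^{-1}$ to turn the diagonal term into $\drc{\ia\ib}(q-q^{-1})(\barK_\ia - \barK_\ia^{-1})$, while in the off-diagonal and Serre-type terms each surviving $\barX$ absorbs one factor of $(q-1)$ and the leftover $(q-1)$ combines with the ambient $(1+q^{-1})$ into $q-q^{-1}$. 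This reproduces exactly the relations displayed in the statement.

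Next I would show that $\Delta$, $\epsilon$ and $S$ preserve $\utildeqgx$, which suffices as the barred elements generate it. The counit annihilates every barred generator except the group-likes $\barK{}^{\,\pm 1}_\alpha$, and the coproduct formulas follow from Proposition~\ref{prop: Hopf-str_Uqgx}(2): in $\Delta(\barX{}^{\,+}_\ia) = (q-1)\,\Delta(\dot{X}^+_\ia)$ the explicit $(q-1)$ of the mixed summand together with the overall rescaling supplies $(q-1)^2$, which is exactly absorbed by $\dot{X}^+_\ib\otimes\dot{X}^+_\ic = (q-1)^{-2}\,\barX{}^{\,+}_\ib\otimes\barX{}^{\,+}_\ic$, leaving the stated denominator-free coproduct with all summands in $\utildeqgx\otimes\utildeqgx$ (so no completion issue beyond the one already present for $\uqgx$ arises). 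Stability under the antipode is cleanest via the antipode axiom solved recursively on generators: $S(\barK{}^{\,\pm 1}_\alpha) = \barK{}^{\,\mp 1}_\alpha$, $S(\barH_\alpha) = -\barH_\alpha\barK_\alpha^{-1}$, and, by induction on the interval grading, $S(\barX{}^{\,\pm}_\alpha)$ is a polynomial in the barred generators.

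The hard part will be \emph{completeness} of the presentation: the computations above only show that $\utildeqgx$ is a quotient of the abstract $\kqqm$-algebra $A$ on the listed generators and relations, and one must exclude further relations. I would settle this by base change together with a torsion-freeness argument. Inverting $(q-1)$ identifies the barred and dotted generators up to units, so $\Bbbk(q)\otimes_{\kqqm}\utildeqgx = \Bbbk(q)\otimes_{\kqqm}\uqgx$, a $\Bbbk(q)$-algebra carrying the complete presentation obtained from Proposition~\ref{prop: Hopf-str_Uqgx}(1) by clearing denominators; provided $A$ is $\kqqm$-torsion-free -- which one verifies by transporting a triangular/PBW-type $\kqqm$-basis of $\uqgx$ to one of $\utildeqgx$ in the barred generators -- the surjection $A\twoheadrightarrow\utildeqgx$ is injective after, hence before, inverting $(q-1)$, giving $A\cong\utildeqgx$. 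Finally, the Borel cases $\utildeqbxp$ and $\utildeqbxm$ follow by discarding throughout the generators of the opposite sign and repeating the argument verbatim, now invoking Proposition~\ref{prop: Hopf-str_Uqgx}(3).
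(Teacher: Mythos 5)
Your proposal is correct and, in substance, is the paper's own argument: the paper offers no written proof, declaring the proposition ``a direct consequence of definitions along with Proposition \ref{prop: Hopf-str_Uqgx}'' and ``just a straightforward check,'' and your first two paragraphs carry out exactly that check --- rescaling the generators of Definition \ref{def: Utildeqgx}, clearing denominators via the identity $ (q-1)\big(1+q^{-1}\big) = q - q^{-1} $, and transporting the counit/coproduct formulas (your bookkeeping of the powers of $ (q-1) $ in each relation and in $ \Delta\big(\barX{}^{\,\pm}_\ia\big) $ is accurate). Your fourth paragraph, on completeness of the presentation, supplies detail the paper leaves tacit, but it stays within the paper's toolkit: the triangular decomposition \eqref{eq: triang-decomp_Uqgx} and the PBW-type spanning sets you invoke are precisely what the paper sets up later in the proof of Theorem \ref{thm: uqgx'=utildeqgx}, and your base-change argument is sound --- in fact, once the barred ordered monomials are shown to span the abstract algebra $ A $ and to map onto the rescaled PBW basis of $ \uqgx $, injectivity of $ A \twoheadrightarrow \utildeqgx $ follows at once, so the localization/torsion-freeness detour is even redundant.

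One slip to repair: your antipode argument ``by induction on the interval grading'' is not available in this continuum setting. An interval $ \ia \in \intsf(X) $ admits a continuum of decompositions $ \, \ia = \ib \oplus \ic \, $ with no minimal pieces, so the recursion $ \; S\big(\barX{}^{\,+}_\ia\big) = -\,\barK{}_\ia^{\,-1}\,\barX{}^{\,+}_\ia - \sum_{\ia = \ib \oplus \ic} (\cdots) \; $ is not well-founded, and $ S\big(\barX{}^{\,\pm}_\ia\big) $ is a \emph{series}, not a polynomial, in the barred generators. The fix is cheap and is what the statement (and the paper) actually prescribes: use the formula $ \; S = \sum_n m^{(n)} \circ {(\id - \iota \circ \epsilon)}^{\otimes n} \circ \Delta^{(n)} \, $, observe that your barred coproduct formulas show each summand preserves (the appropriate completion of) $ \utildeqgx $, and note that the resulting convergence caveats are exactly the ones already implicit for $ \uqgx $ itself in Proposition \ref{prop: Hopf-str_Uqgx}.
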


\vskip9pt

   Like for  Proposition \ref{prop: Hopf-str_Uqgx},  the proof of  Proposition \ref{prop: pres-utildeqgx}  above is just a straightforward check.  What is less obvious is that  $ \utildeqgx $  is in fact a  \textsl{Quantum Function Algebra (=QFA)},  but we first have to fix the latter notion.

\vskip11pt

\begin{definition}  \label{def: polyn_QFA}
 We call  \emph{quantized function algebra (=QFA)\/}  any Hopf  $ \kqqm $--algebra  $ F_q $  (in the classical sense) such that  $ \,\; F_1 \, := \, F_q \big/ (\,q-1)\,F_q \;\, $  is a commutative Hopf algebra over  $ \, \kqqm \big/ (\,q-1)\,\kqqm \, \cong \, \Bbbk \; $   --- or, equivalently,  $ F_1 $  is isomorphic to the algebra of functions  $ F[G] $
   of an affine\footnote{\,Note that we are not assuming  $ F_1 $  to be finitely generated, hence the notion of ``affine group'' is meant in its broadest sense.}
 group  $ G \, $;  \,then the formula
 $ \; \displaystyle{ \{x,y\} \, := \, \frac{\; \big[ x' , y' \big] - \big[ y' , x' \big] \;}{q-1} \; \mod (\,q-1)\,F_q } \; $
 --- where  $ \, x' , y' \in F_q \, $  are lifts of  $ \, x, y \in F_1 = F[G] \, $  ---   gives a Poisson bracket in  $ F[G] \, $,  which makes the latter into a Poisson Hopf algebra and so, by general theory,  $ G $  into an affine Poisson group.
                                                                   \par
   In this case, we say that  $ F_q $  is a  \emph{quantization\/}  of the Poisson Hopf algebra  $ F[G] \, $,  or (with a slight abuse of language) of the Poisson group  $ G \, $.
\end{definition}

\vskip11pt

   Using now the language of QFA's, from  Proposition \ref{prop: pres-utildeqgx}  above we get, by straightforward analysis, the following, appropriate version of the QDP for the ``polynomial'' QUEA  $ \uqgx \, $:

\vskip13pt

\begin{theorem}  \label{thm: QDP x Uqgx}
 The Hopf\/  $ \kqqm $--algebra  $ \utildehgx $  is a QFA, which is a quantization of an affine Poisson group  $ G^*_{\!X} $  dual to the Lie bialgebra  $ \g_X \, $.
                                         \par
   More in detail, setting  $ \; \utildeunogx := \utildeqgx \Big/ (\,q-1)\,\utildeqgx \; $,  \,we have:
 \vskip-1pt
   (a)\;  $ \; \utildeunogx \, $  is a  \textsl{commutative}  Hopf algebra;
 \vskip3pt
   (\,b)\;  the Lie bialgebra structure on  $ \, I \big/ I^2 \, $  from  $ \utildeunogx $   --- as in  Definition \ref{def: polyn_QFA}  ---   makes  $ \, I \big/ I^2 \, $  into a Lie  $ \, \Bbbk $--bialgebra  isomorphic to  $ \, \g_X \, $,  via the map (for  $ \, \ia \in \intsf(X) \, $)
  $$  \begin{aligned}
   \Big(\; \barH_\alpha \; \big(\, \text{\rm mod\ } (q-1)\,\utildeunogx \big) \; \text{\rm mod\ } I^2 \,\Big)  &  \,\; \mapsto \;\, \xi_\ia  \\
   \Big(\; \barX{}_\ia^{\,\pm} \; \big(\, \text{\rm mod\ } (q-1)\,\utildeunogx \big) \; \text{\rm mod\ } I^2 \,\Big)  &  \,\; \mapsto \;\, 2 \, x_\ia^\pm
\end{aligned}   \eqno  \forall \; \; \alpha \in \intsf(X)  \quad  $$
\end{theorem}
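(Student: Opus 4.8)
The plan is to follow closely the pattern of the proof of Theorem \ref{thm: QDP x Uhgx}, with the element $ \, q - 1 \, $ now playing the role that $ \hbar $ played in the formal setting. Since Proposition \ref{prop: pres-utildeqgx} already provides both the Hopf $ \kqqm $--algebra structure of $ \utildeqgx $ and an explicit presentation by generators and relations, the only things left to check are the QFA property (in the sense of Definition \ref{def: polyn_QFA}) and the identification of the semiclassical Lie bialgebra $ \, I \big/ I^2 \, $ with $ \g_X \, $.

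For claim \textit{(a)} I would reduce each relation of Proposition \ref{prop: pres-utildeqgx} modulo $ (q-1) \, $. Using $ \, q \equiv 1 \, $, $ \, q - q^{-1} = (q-1)\big( 1 + q^{-1} \big) \equiv 0 \, $ and $ \, q^{\pm(\alpha|\,\beta)} \equiv 1 \equiv q^{\qcr{\ia}{\ib}{}} \, $ modulo $ (q-1) \, $, every relation becomes a plain commutativity relation: the $ \barK{}_\ia^{\,\pm 1} $'s and $ \barH_\ia $'s already commute among themselves, all the relations intertwining a $ \barK{}_\ia^{\,\pm 1} $ or a $ \barH_\ia $ with some $ \barX{}_\ib^{\,\pm} $ collapse to commutativity, and the two relations whose right-hand sides carry a global factor $ \big( q - q^{-1} \big) $ give $ \, \barX{}_\ia^{\,+} \barX{}_\ib^{\,-} = \barX{}_\ib^{\,-} \barX{}_\ia^{\,+} \, $ and $ \, \barX{}_\ia^{\,\pm} \barX{}_\ib^{\,\pm} = \barX{}_\ib^{\,\pm} \barX{}_\ia^{\,\pm} \, $. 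Hence $ \utildeunogx $ is commutative, which is claim \textit{(a)}; together with the Hopf structure of Proposition \ref{prop: pres-utildeqgx}, this shows via Definition \ref{def: polyn_QFA} that $ \utildeqgx $ is a QFA, so that $ \, \utildeunogx = F\big[ G_X^* \big] \, $ for an affine Poisson group $ G_X^* $ whose Poisson bracket is the one induced by the recipe of Definition \ref{def: polyn_QFA}.

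For claim \textit{(b)} the work is, as for part \textit{(c)} of Theorem \ref{thm: QDP x Uhgx}, pure bookkeeping. Writing $ \, \check{H}_\ia := \big( \barH_\ia \bmod (q-1) \big) \bmod I^2 \, $ and $ \, \check{X}_\ia^{\,\pm} := \big( \barX{}_\ia^{\,\pm} \bmod (q-1) \big) \bmod I^2 \, $, the assignment of the statement is manifestly a $ \Bbbk $--linear isomorphism $ \, I \big/ I^2 \to \g_X \, $ (the relations $ \, \xi_{\ia\oplus\ib} = \delta_{\ia\oplus\ib}(\xi_\ia + \xi_\ib) \, $ not interfering, since they reduce to the analogous $ \, \check{H}_{\ia\oplus\ib} = \delta_{\ia\oplus\ib}(\check{H}_\ia + \check{H}_\ib) \, $), so it suffices to verify that it intertwines bracket and cobracket. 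For the bracket I would compute, from the mixed relation of Proposition \ref{prop: pres-utildeqgx}, the Poisson bracket of the classes of $ \barX{}_\ia^{\,+} $ and $ \barX{}_\ib^{\,-} $, namely $ \, \big( \barX{}_\ia^{\,+} \barX{}_\ib^{\,-} - \barX{}_\ib^{\,-} \barX{}_\ia^{\,+} \big) \big/ (q-1) \bmod (q-1) \, $: the prefactor $ \big( q - q^{-1} \big) \big/ (q-1) = 1 + q^{-1} $ reduces to $ 2 \, $, the difference $ \, \barK{}_\ia - \barK{}_\ia^{\,-1} \, $ reduces to $ \, 2\,\check{H}_\ia \, $ modulo $ I^2 $ (because $ \, \barK{}_\ia = 1 + \barH_\ia \, $), and every summand carrying an extra factor $ \barK{}^{\,\cdots} $ or a second $ \barX $--factor drops out modulo $ I^2 \, $; the surviving expression then maps, under $ \, \check{X}^{\,\pm} \mapsto 2\,x^\pm \, $ and $ \, \check{H} \mapsto \xi \, $, onto $ \; 4\big( \drc{\ia\ib}\,\xi_\ia + \ca{\ia}{\ib}\big( x_{\ia\ominus\ib}^+ - x_{\ib\ominus\ia}^- \big) \big) = 4\,\big[ x_\ia^+ , x_\ib^- \big] = \big[ 2\,x_\ia^+ , 2\,x_\ib^- \big] \, $, as needed. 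The brackets among equal-sign $ \barX{}^{\,\pm} $'s and those involving the $ \barH_\ia $'s are handled identically, whereas the cobracket is read off from the coproduct formulas of Proposition \ref{prop: pres-utildeqgx} exactly as in the computation leading to \eqref{eq: map-Lie-cobracket} --- the only changes being the exact identity $ \, \barK{}_\ia = 1 + \barH_\ia \, $ (in place of $ \, \barK{}_\ia = \exp\big( \barXi_\ia / 2 \big) \, $), the careful analysis of the constants $ \ca{\ib}{,\,\ia} $ and $ \qcs{\ic}{\ib}{\pm} $ carried out in \cite{appel-sala-20}, and the overall rescaling factor $ 2 \, $. Finally, once $ \, I \big/ I^2 \cong \g_X \, $ as Lie bialgebras, the non-degenerate pairing recalled in \S \ref{subsec: quant_Lie-bialg_&_P-groups} yields $ \, \textsl{Lie}\big( G_X^* \big) = {\big( I \big/ I^2 \big)}^* \cong \g_X^* \, $, so that $ G_X^* $ is indeed the affine Poisson group dual to $ \g_X \, $.

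I expect the main obstacle to be exactly this coefficient bookkeeping: one must check that the several independent sources of a factor $ 2 $ --- namely $ \, \big( q - q^{-1} \big)\big/(q-1) \equiv 2 \, $, the reduction $ \, \barK{}_\ia - \barK{}_\ia^{\,-1} \equiv 2\,\check{H}_\ia \, $, and the interplay of the constants $ \ca{\ib}{,\,\ia} $ and $ \qcs{\ic}{\ib}{\pm} $ entering the coproduct --- interlock correctly with the chosen normalization $ \, \barX{}_\ia^{\,\pm} = (q-1)\,\dot{X}_\ia^\pm \, $, so that the single rescaling $ \, \check{X}^{\,\pm} \mapsto 2\,x^\pm \, $ (rather than $ x^\pm $) makes every structural identity hold on the nose. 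This is precisely where the factor $ 2 $ appearing in claim \textit{(b)} is forced.
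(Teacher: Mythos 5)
Your proposal is correct and takes essentially the same approach as the paper: claim \textit{(a)} is read off from the relations of Proposition \ref{prop: pres-utildeqgx} reduced modulo $ (\,q-1) \, $, and claim \textit{(b)} is the same generator-by-generator bookkeeping through the assignment $ \, \check{H}_\ia \mapsto \xi_\ia \, $, $ \, \check{X}{}^{\,\pm}_\ia \mapsto 2\,x^\pm_\ia \, $, the paper merely exhibiting the equal-sign bracket $ \big[\, \check{X}{}^{\,\pm}_\ia , \check{X}{}^{\,\pm}_\ib \,\big] $ as its sample computation (explicitly to complement the mixed bracket already worked out in the proof of Theorem \ref{thm: QDP x Uhgx}), whereas you exhibit the mixed one. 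One cosmetic caveat: the summands $ \, q^{\qcc{\ia}{\ib}{\pm}} \, \barX{}^{\,\pm}_{\cdot} \, \barK{}^{\,\cdot}_{\cdot} \, $ do not themselves ``drop out'' modulo $ I^2 $ --- only the deviation of the $ \barK $--factor from $ 1 $ does --- but since your final displayed expression retains exactly the terms $ \ca{\ia}{\ib}\big(\, x^+_{\ia\ominus\ib} - x^-_{\ib\ominus\ia} \big) $, this is only a slip of wording, not of substance.
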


\begin{proof}
 The situation is entirely similar to that of  Theorem \ref{thm: QDP x Uhgx},  namely claim  \textit{(a)\/}  follows at once from the commutation relations in  Proposition \ref{prop: pres-utildeqgx},  while claim  \textit{(b)\/}  is just a matter of bookkeeping (up to taking into account the different normalizations, which might be somehow misleading).  Indeed, by construction the assignment in  \textit{(b)\/}  yields a  $ \Bbbk $--linear  isomorphism from  $ \, I \big/ I^2 \, $  to  $ \g_X \, $;  \,after this, one has to check, tracking all definitions, that through this map the Lie bracket and Lie cobracket considered in  $ \, I \big/ I^2 \, $  do correspond   --- for the generators of  $ \, I \big/ I^2 \, $  ---   to those in  $ \g_X \, $.  The computations are again very similar to those for  $ \, \utildehgx \big/ \hbar\,\utildehgx \, $,  \,hence we complement what we did for  Theorem \ref{thm: QDP x Uhgx}  considering a different example.
                                                                      \par
   Using notation  $ \, \widetilde{U}_q := \utildeqgx \, $  and  $ \; {\check{X}}_\ic^\pm := \Big(\, \barX{}_\ic^{\,\pm} \, \big(\, \text{\rm mod\ } (\,q-1)\,\widetilde{U}_q \,\big) \,\; \text{\rm mod\ } I^2 \,\Big) \, $,  \,we show that
\begin{equation}  \label{eq: map-Lie-bracket_al-bet}
   \big[\, {\check{X}}_\ia^{\,\pm} \, , {\check{X}}_\ib^{\,\pm} \,\big]  \,\; \mapsto \;\,  \big[\, 2\,x_\ia^{\,\pm} \, , \, 2\,x_\ib^{\,\pm} \,\big]
\end{equation}
 Indeed, setting  $ \; \check{H}_\ia \, := \, \Big(\; \barH_\ia \, \big(\, \text{\rm mod\ } (\,q-1)\,\widetilde{U}_q \,\big) \,\; \text{\rm mod\ } I^2 \,\Big) \, $  we have
%
  \eject

  $$  \displaylines{
   \big[\, {\check{X}}_\ia^{\,\pm} \, , {\check{X}}_\ib^{\,\pm} \,\big]  \; = \;  \Big\{ \Big(\, \barX{}_\ia^{\,\pm} \; \text{\rm mod\ } (\,q-1)\,\widetilde{U}_q \Big) \, , \Big(\, \barX{}_\ib^{\,\pm} \; \text{\rm mod\ } (\,q-1)\,\widetilde{U}_q \Big) \Big\} \; \text{\rm mod\ } I^2  \; =   \hfill  \cr
   = \;  \Bigg(\, \frac{\,\big[\, \barX{}_\ia^{\,\pm} , \barX{}_\ib^{\,\pm} \,\big]\,}{q-1} \; \Big(\, \text{\rm mod\ } (\,q-1)\,\widetilde{U}_q \Big) \Bigg) \; \text{\rm mod\ } I^2  \; =  \cr
   \hfill   = \;  \Bigg(\, \frac{\, \barX{}_\ia^{\,\pm} \, \barX{}_\ib^{\,\pm} - \barX{}_\ib^{\,\pm} \, \barX{}_\ia^{\,\pm} \,}{q-1} \; \Big(\, \text{\rm mod\ } (\,q-1)\,\widetilde{U}_\hbar \Big) \Bigg) \; \text{\rm mod\ } I^2  \; =  \cr
   = \;  \Bigg( \Bigg( {(\,q-1)}^{-1} \bigg( \big(\, q^{\qcr{\ia}{\ib}{}} - 1 \big) \, \barX{}^{\,\pm}_{\ib} \, \barX{}^{\,\pm}_{\ia} \, + \, \big(\, q - q^{-1} \big) \, \qcb{\ia}{\ib}{} \, \Big( \pm q^{\qcs{\ia}{\ib}{\pm}} \barX{}^{\,\pm}_{\ia\oplus\ib} \, + \, \barX{}^{\,\pm}_{\iM{\ia}{\ib}} \, \barX{}^{\,\pm}_{\im{\ia\, }{\,\ib}} \,\Big) \bigg) \Bigg)   \hfill  \cr
   \hfill   \text{\rm mod\ } (\,q-1)\,\widetilde{U}_q \,\Bigg) \;\; \text{\rm mod\ } I^2  \; =  \cr
   = \;  \Bigg( \bigg( {\big(\, {\qcr{\ia}{\ib}{}} \big)}_q \, \barX{}^{\,\pm}_{\ib} \, \barX{}^{\,\pm}_{\ia} \, + \, \big( 1 + q^{-1} \big) \, \qcb{\ia}{\ib}{} \, \Big( \pm q^{\qcs{\ia}{\ib}{\pm}} \barX{}^{\,\pm}_{\ia\oplus\ib} \, + \, \barX{}^{\,\pm}_{\iM{\ia}{\ib}} \, \barX{}^{\,\pm}_{\im{\ia\, }{\,\ib}} \,\Big) \bigg) \;\;   \hfill  \cr
   \hfill   \text{\rm mod\ } (\,q-1)\,\widetilde{U}_q \,\Bigg) \;\; \text{\rm mod\ } I^2  \; =  \cr
   = \;  \bigg( \qcr{\ia}{\ib}{} \, \check{X}^{\,\pm}_{\ib} \, \check{X}^{\,\pm}_{\ia} \, + \, 2 \, \qcb{\ia}{\ib}{} \Big( \pm \check{X}^{\,\pm}_{\ia\oplus\ib} \, + \, \check{X}^{\,\pm}_{\iM{\ia}{\ib}} \, \check{X}^{\,\pm}_{\im{\ia\, }{\,\ib}} \,\Big) \bigg) \;\; \text{\rm mod\ } I^2  \; =   \hfill  \cr
   \hfill   = \;  \Big( \pm 2 \, \qcb{\ia}{\ib}{} \, \check{X}^{\,\pm}_{\ia\oplus\ib} \Big) \; \text{\rm mod\ } I^2  \;\; \mapsto \;\;  \pm \, 4 \, \qcb{\ia}{\ib}{} \, x^{\,\pm}_{\ia\oplus\ib}  \, = \,  4 \, \big[\, x_\ia^\pm \, , \, x_\ib^\pm \,\big]   \, = \,  \big[\, 2\,x_\ia^\pm \, , \, 2\,x_\ib^\pm \,\big]  }  $$
 --- where we wrote  $ \, {(n)}_q := {{\, q^n - 1 \,} \over {\,q-1\,}} = \sum_{s=0}^{n-1} q^s \, $  and  $ \, {(-n)}_q := -q^{-1} {(n)}_{q^{-1}} \, $  for all  $ \, n \in \N \, $  and we took into account that  $ \, \qcb{\ia}{\ib}{} = \ca{\ia}{,\,\ia\oplus\ib}  \, $  ---   so that  \eqref{eq: map-Lie-bracket_al-bet}  is proved.
\end{proof}

\vskip11pt

   The previous analysis has also the following, additional outcome:

\vskip11pt

\begin{corollary}  \label{cor: G Poisson dual}
 The Hopf algebra  $ \; F\big[ G^*_{\!X} \big] \, = \, \utildeunogx \, := \, \utildeqgx \Big/ (\,q-1)\,\utildeqgx \; $  is isomorphic to the Hopf algebra
 $ \; \Bbbk\Big[ {\big\{ X^+_\alpha \, , K^{\pm 1}_\alpha , \, X^-_\alpha \big\}}_{\alpha \in \intsf(X)} \Big] \; $
%
 of polynomials / Laurent polynomials in the variables  $ X^+_\alpha \, $,  $ X^-_\alpha $  (non-invertible) and  $ K^{\pm 1}_\alpha $  (invertible), for all  $ \, \in \intsf(X) \, $.  Thus, the Poisson group  $ G^*_{\!X} \, $,  as an affine variety, is the direct product of an affine space
 $ \, \mathcal{X}^+ \! := \textsl{Spec}\Big( \Bbbk\Big[ {\big\{ X^+_\alpha \big\}}_{\alpha \in \intsf(X)} \Big] \Big) $,
 an affine torus
 $ \, \mathcal{K} := \textsl{Spec}\Big( \Bbbk\Big[ {\big\{ K^{\pm 1}_\alpha \big\}}_{\alpha \in \intsf(X)} \Big] \Big) $,
 and another affine space
 $ \, \mathcal{X}^- \! := \textsl{Spec}\Big( \Bbbk\Big[ {\big\{ X^-_\alpha \big\}}_{\alpha \in \intsf(X)} \Big] \Big) $,
 all having infinite dimension equal to  $ \, \big| \intsf(X) \big| \, $,  \,in short  $ \; G^*_{\!X} \, \cong \, \mathcal{X}^{+\!} \times \mathcal{K} \times \mathcal{X}^- \; $.
                                                                             \par
   Finally, the subvarieties  $ \, \mathcal{X}^{+\!} \times \mathcal{K} \, $,  $ \, \mathcal{K} \, $  and  $ \, \mathcal{K} \times \mathcal{X}^- \, $  are Poisson subgroups of  $ \, G^*_{\!X} \, $.
\end{corollary}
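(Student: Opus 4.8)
The plan is to obtain the structure of $\utildeunogx = \utildeqgx \big/ (q-1)\,\utildeqgx$ by specializing at $q = 1$ the presentation of $\utildeqgx$ recorded in Proposition~\ref{prop: pres-utildeqgx}. First I would note which relations survive the reduction modulo $(q-1)$: since $q \equiv 1$, one has $q^{\pm(\alpha|\beta)} \equiv 1$, $q^{\qcr{\ia}{\ib}{}} \equiv 1$ and $(q-q^{-1}) \equiv 0 \equiv (q-1)$, so every right-hand side carrying a factor $(q-1)$ or $(q-q^{-1})$ is killed. In this way all the commutator relations collapse to plain commutativity --- re-deriving claim~\textit{(a)} of Theorem~\ref{thm: QDP x Uqgx} --- while the relation $\barK_\alpha = 1 + \barH_\alpha$ allows me to eliminate each $\barH_\alpha$ in favour of $\barK_\alpha - 1$. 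Thus the surviving generators are the images $X^+_\alpha$, $X^-_\alpha$ of the $\barX{}^{\,\pm}_\alpha$ and $K^{\pm 1}_\alpha$ of the $\barK{}^{\,\pm 1}_\alpha$.

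The decisive observation is the asymmetry between the two families of concatenation relations. The relation $\barK{}^{\,\pm 1}_{\alpha \oplus \beta} = \delta_{\alpha \oplus \beta}\,\barK{}^{\,\pm 1}_\alpha\,\barK{}^{\,\pm 1}_\beta$, together with $\barK{}^{\,\pm 1}_\alpha\,\barK{}^{\,\mp 1}_\alpha = 1$, carries no factor of $(q-1)$ and therefore persists at $q = 1$; it makes the $K^{\pm 1}_\alpha$ generate the group algebra of the abelian group $\fun{X}$, i.e.\ the coordinate ring of the torus $\mathcal{K}$. By contrast, in the Serre-type relation $\barX{}^{\,\pm}_\ia\,\barX{}^{\,\pm}_\ib - q^{\qcr{\ia}{\ib}{}}\,\barX{}^{\,\pm}_\ib\,\barX{}^{\,\pm}_\ia = (q-q^{-1})\,\qcb{\ia}{\ib}{}(\cdots)$ the element $\barX{}^{\,\pm}_{\ia\oplus\ib}$ occurs only on a right-hand side divisible by $(q-q^{-1})$, hence disappears: the $X^\pm_\alpha$ inherit no concatenation relation and become free commuting variables indexed by $\alpha \in \intsf(X)$. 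Reducing the whole presentation then produces a surjection of $\Bbbk\big[{\{ X^+_\alpha \, , K^{\pm 1}_\alpha , \, X^-_\alpha \}}_{\alpha \in \intsf(X)}\big]$ onto $\utildeunogx$.

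To promote this surjection to the asserted isomorphism I must exclude hidden relations, i.e.\ prove that the three families $X^+_\alpha$, $K^{\pm 1}_\alpha$, $X^-_\alpha$ are algebraically independent. Here I would invoke the triangular (PBW-type) decomposition of $\utildeqgx$ inherited from the double cross product realization of $\uhgx$ in Theorem~\ref{thm: Hopf-str_Uhgx}\textit{(4)}: the ordered monomials in the $\barX{}^{\,+}$'s, the $\barK{}^{\,\pm 1}$'s and the $\barX{}^{\,-}$'s form a $\kqqm$--basis of $\utildeqgx$ that remains a basis after reduction modulo $(q-1)$. This yields the tensor factorization $\utildeunogx \cong \Bbbk\big[{\{X^+_\alpha\}}\big] \otimes \Bbbk\big[{\{K^{\pm 1}_\alpha\}}\big] \otimes \Bbbk\big[{\{X^-_\alpha\}}\big]$, whence $G^*_{\!X} \cong \mathcal{X}^{+} \times \mathcal{K} \times \mathcal{X}^-$ as affine varieties. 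I expect this independence step to be the main obstacle, being the only point where a genuine structural input, rather than bookkeeping, is required.

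For the final assertion I would proceed in two moves. That $\mathcal{X}^{+} \times \mathcal{K}$, $\mathcal{K}$ and $\mathcal{K} \times \mathcal{X}^-$ are closed subgroups follows from the coproduct formulas of Proposition~\ref{prop: pres-utildeqgx}: $\Delta\big(\barX{}^{\,+}_\ia\big)$ involves only the $\barX{}^{\,+}$'s and $\barK$'s, and $\Delta\big(\barX{}^{\,-}_\ia\big)$ only the $\barX{}^{\,-}$'s and $\barK$'s, so the ideals generated by $\{X^-_\alpha\}$, by $\{X^+_\alpha\}$, and by $\{X^\pm_\alpha\}$ are Hopf ideals, whose quotients are precisely the coordinate rings of the three subgroups. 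That these subgroups are in fact \emph{Poisson} follows by checking that the Poisson bracket of Definition~\ref{def: polyn_QFA} closes on the corresponding coordinate subalgebras; this is read off from the $(q-1)$--linear parts of the defining relations, e.g.\ $\{K_\alpha, X^+_\beta\} = (\alpha|\beta)\,X^+_\beta K_\alpha$, $\{K_\alpha, K_\beta\} = 0$, and $\{X^+_\alpha, X^+_\beta\}$ lying inside $\Bbbk\big[{\{X^+\}}\big]$, so that the defining ideals are also Poisson ideals.
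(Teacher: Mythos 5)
Your first three paragraphs follow exactly the route of the paper's own (two-line) proof: reduce the presentation of $\utildeqgx$ given in Proposition~\ref{prop: pres-utildeqgx} modulo $(\,q-1)$ and combine with Theorem~\ref{thm: QDP x Uqgx}. Your bookkeeping of which relations survive is correct, and you are right that the concatenation relations $\barK{}_{\alpha\oplus\beta}^{\,\pm 1} = \delta_{\alpha\oplus\beta}\,\barK{}_\alpha^{\,\pm 1}\barK{}_\beta^{\,\pm 1}$ carry no factor of $(q-1)$ and persist, so that the middle factor is the group algebra $\Bbbk[\fun{X}]$ (this is the faithful reading of the corollary; the $K_\alpha$'s are \emph{not} free Laurent variables, so do not later claim their ``algebraic independence''). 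Also, the step you single out as the main obstacle is lighter than you think: if $A = F/R$ with $F$ free over $\kqqm$, then $A\big/(q-1)A = \big(F/(q-1)F\big)\big/\overline{R}$, i.e.\ reducing a presentation modulo $(q-1)$ \emph{automatically} presents the quotient. Hence, once Proposition~\ref{prop: pres-utildeqgx} is granted, no hidden relations can appear and no PBW input is needed --- which is just as well, since the paper establishes the quantum PBW monomials \eqref{eq: PBW-spanset x uqgx} only as a $\kqqm$--spanning set of $\uqgx$ (a basis only for the $\calU_q\big(\hskip1pt\n^\pm_X\big)$ parts), so your appeal to a basis of $\utildeqgx$ surviving reduction is an unproved stronger claim, fortunately an unnecessary one.

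The genuine error is in your last paragraph: the assertion that ``the defining ideals are also Poisson ideals'' is false, and the brackets you check ($\{K_\alpha,X^+_\beta\}$, $\{K_\alpha,K_\beta\}$, $\{X^+_\alpha,X^+_\beta\}$) are precisely the ones that do not test it. The decisive bracket is the mixed one: from the relation $\barX{}^{\,+}_\alpha\barX{}^{\,-}_\beta - \barX{}^{\,-}_\beta\barX{}^{\,+}_\alpha = \big(q-q^{-1}\big)\Big(\drc{\alpha\beta}\big(\barK_\alpha - \barK{}_\alpha^{\,-1}\big) + \cdots\Big)$, dividing by $(q-1)$ and setting $q=1$ yields $\{X^+_\alpha, X^-_\alpha\} = 2\big(K_\alpha - K_\alpha^{-1}\big)$, which lies in none of the ideals $\big(\{X^-_\gamma\}\big)$, $\big(\{X^+_\gamma\}\big)$, $\big(\{X^\pm_\gamma\}\big)$; for $\alpha\neq\beta$ the terms of the form $X^+_{\alpha\ominus\beta}\,K$ obstruct likewise. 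So these ideals are Hopf ideals (your coproduct argument for that part is fine) but \emph{not} Poisson ideals; the copies $\{X^-=0\}$, etc., are only coisotropic, mirroring the classical fact that $\mathrm{span}\{x^-_\alpha\}$ is a subalgebra but not an ideal of $\g_X$, since $\big[x^+_\alpha, x^-_\alpha\big] = \xi_\alpha$. The claim must instead be carried entirely by the other half-sentence you wrote: the coordinate subalgebras $\Bbbk\big[\{X^+_\alpha, K^{\pm 1}_\alpha\}\big]$, $\Bbbk\big[\{K^{\pm 1}_\alpha\}\big]$, $\Bbbk\big[\{K^{\pm 1}_\alpha, X^-_\alpha\}\big]$ are Hopf subalgebras closed under the Poisson bracket --- they are the $q=1$ specializations of $\utildeqbxp$, $\uqfx\cap\utildeqgx$ and $\utildeqbxm$, which Proposition~\ref{prop: pres-utildeqgx} exhibits as Hopf subalgebras --- and it is through these Poisson--Hopf subalgebras that the three pieces acquire their Poisson group structures inside $G^*_{\!X}$. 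Bracket-closure of a subalgebra is not equivalent to the complementary ideal being Poisson; conflating the two is where your argument breaks.
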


\begin{proof}
 The claim follows from  Theorem \ref{thm: QDP x Uqgx}  and the presentation of  $ \, F\big[ G^*_{\!X} \big] = \utildeunogx \, $  that one gets from the presentation of  $ \utildeqgx $  in  Proposition \ref{prop: pres-utildeqgx}.
\end{proof}

\vskip9pt

\begin{Observation}  \label{obs: utildeqgx over Zqqm}
 Much like we did for  $ \uqgx $  in  Observation \ref{obs: uqgx over Zqqm},  it is worth remarking that in the formulas occurring in  Proposition \ref{prop: pres-utildeqgx}  \textsl{all coefficients that show up actually belong to the (sub)ring\/}  $ \Z\big[\,q,q^{-1}\big] \, $.  Thus the Hopf algebra  $ \utildeqgx $  can actually be defined  \textsl{over\/}  $ \Z\big[\,q,q^{-1}\big] \, $,  \,hence one can consider  $ R $--integral  forms of it over any ring  $ R \, $,  \,hence possibly look for specializations at roots of unity, etc., much like in the study of quantum groups by Kac, De Concini, Procesi and others, including   --- in an infinite dimensional setting, such as is that of  $ \g_X $  ---   the study in \cite{beck-94,beck-96}  for the case of quantum affine Kac--Moody algebras.
\end{Observation}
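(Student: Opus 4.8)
The plan is to reduce the assertion to a finite check on the \emph{coefficients} appearing in the presentation of $\utildeqgx$ recorded in Proposition \ref{prop: pres-utildeqgx}: if every structure constant occurring there — in the commutation relations, in the coproducts, and (through the recursive antipode formula) in the antipode — already lies in the subring $\Z\big[\,q,q^{-1}\big]\subseteq\kqqm$, then one may simply \emph{define} $\utildeqgx$ over $\Z\big[\,q,q^{-1}\big]$ by that very presentation and recover the $\kqqm$–algebra by the base change $\kqqm\otimes_{\Z[q,q^{-1}]}(-)$, after which integral forms over an arbitrary ring $R$ and specializations at roots of unity are obtained by further base change. So the whole content sits in the coefficient check.

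First I would recall from \S\ref{subsec: cont-KM-bialg's} that the Euler form $\abf{\cdot}{\cdot}$ and its symmetrization are $\Z$–valued on $\intsf(X)$, whence $\ca{\alpha}{\beta}=(-1)^{\abf{\alpha}{\beta}}(\alpha|\beta)$, as well as $\qcb{\alpha}{\beta}{}$ and $\qcr{\alpha}{\beta}{}$, are integers. The coefficients in Proposition \ref{prop: pres-utildeqgx} then come in three shapes, each manifestly in $\Z\big[\,q,q^{-1}\big]$: the monomials $q^{\pm(\alpha|\beta)}$ and $q^{\qcr{\alpha}{\beta}{}}$ (integer exponents); the factors $\big(q-q^{-1}\big)$ and $(q-1)$; and the $q$–number in the $\barH$–$\barX$ relation, whose apparent denominator cancels, since $(q-1)\big(\!\pm(\alpha|\beta)\big)_q=q^{\pm(\alpha|\beta)}-1$. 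The integer multipliers $\ca{\beta}{,\alpha}$ and the coproduct factors are dispatched in the same way.

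The one genuinely delicate point — and the main obstacle — is the integrality of the half-integer-looking quantities $\qcc{\alpha}{\beta}{\pm}=2^{-1}\big(\ca{\beta}{,\alpha\ominus\beta}\mp1\big)$ occurring as exponents and $\qcs{\alpha}{\beta}{\pm}=2^{-1}\big(\ca{\beta}{,\alpha\oplus\beta}\pm1\big)$ occurring both as exponents and as coproduct coefficients: a priori these lie only in $\tfrac12\Z$, and the claim collapses unless they are actually integers. I would settle this via the parity of the Euler form established in \cite{appel-sala-schiffmann-18, appel-sala-20}, namely that — whenever $\alpha\ominus\beta$, resp.\ $\alpha\oplus\beta$, is defined, and $(\alpha,\beta)\in\serre{X}$ where required — the integer $\ca{\beta}{,\alpha\ominus\beta}$, resp.\ $\ca{\beta}{,\alpha\oplus\beta}$, is \emph{odd}; this is exactly the same parity input that already makes $q^{\qcc{\alpha}{\beta}{\pm}}$ and $\qcs{\alpha}{\beta}{\pm}$ well defined over $\kqqm$ in Proposition \ref{prop: Hopf-str_Uqgx}. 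A cleaner route, which I would also indicate, is to leverage Observation \ref{obs: uqgx over Zqqm}: since $\uqgx$ is already defined over $\Z\big[\,q,q^{-1}\big]$ and the generators $\barH_\alpha=(q-1)\dot{H}_\alpha$, $\barX{}^{\,\pm}_\alpha=(q-1)\dot{X}^\pm_\alpha$, $\barK{}^{\,\pm1}_\alpha=\dot{K}^{\pm1}_\alpha$ of $\utildeqgx$ arise from the integral generators of $\uqgx$ by multiplication by $(q-1)\in\Z\big[\,q,q^{-1}\big]$, the $\Z\big[\,q,q^{-1}\big]$–subalgebra they generate is an integral form whose relations are obtained from those of $\uqgx$ by operations internal to $\Z\big[\,q,q^{-1}\big]$ (the relevant divisions by $(q-1)$ and $\big(q-q^{-1}\big)$ being exact there); matching against Proposition \ref{prop: pres-utildeqgx} confirms all coefficients are integral. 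Either way, the base-change and specialization consequences then follow formally, in the spirit of \cite{beck-94, beck-96}.
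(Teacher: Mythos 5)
Your proposal is correct and follows essentially the same route as the paper, which justifies the Observation purely by inspection of the coefficients in Proposition \ref{prop: pres-utildeqgx} (there being no formal proof in the text): you carry out that inspection, reduce the base-change and root-of-unity consequences to formal consequences of integrality, and your alternative derivation from Observation \ref{obs: uqgx over Zqqm} by $(q-1)$--rescaling of the integral generators of $\uqgx$ is exactly the mechanism by which the paper's presentation of $\utildeqgx$ is obtained. Your only genuine addition is making explicit the parity point for $\qcc{\ia}{\ib}{\pm}$ and $\qcs{\ia}{\ib}{\pm}$ --- correctly resolved via the oddness of $\ca{\ib}{,\,\ia\ominus\ib}$ and $\ca{\ib}{,\,\ia\oplus\ib}$ from \cite{appel-sala-schiffmann-18, appel-sala-20}, the same input that already makes the presentation of $\uqgx$ in Proposition \ref{prop: Hopf-str_Uqgx} well defined over $\kqqm$ --- which the paper leaves implicit.
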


\medskip

\subsection{Polynomial QDP for  $ \uqgx \, $:  the intrinsic recipe}
 In  \S \ref{subsec: QDP-polyn}  above we introduced a  $ R_q $--subalgebra  $ \utildeqgx $  of  $ \uqgx \, $,  we proved that it is a Hopf  $ R_q $--algebra,  and even a QFA that is a quantization of  $ G_X^* \, $,  a well-defined affine Poisson group dual to  $ \g_X \, $.  In this sense, we have found a concrete realization of the QDP for the polynomial QUEA  $ \uqgx \, $.
                                                            \par
   Note that the very definition of  $ \uqgx $  strictly mimics the similar constructions for the standard polynomial QUEA  $ U_q(\g) $  associated (following Drinfeld and Jimbo) with a Kac-Moody algebra  $ \g $  of finite or affine type  (cf.\ \cite{gavarini_PJM-98,gavarini-00},  and references therein).  In fact, the present framework is even more convenient as we already have in  $ \uqgx \, $,  \,by definition, some built-in ``quantum root vectors''  $ \dot{X}{}_\ia^\pm $  that lift (up to a factor 2) the ``root vectors''  $ x_\ia^\pm $  in  $ \g_X $   --- whereas in the case of finite or affine Kac-Moody  $ \g $  one has to construct them out of ``simple root vectors''.
                                                            \par
   We shall now show that, like we did with  $ \utildehgx $  for the ``non-polynomial version of the QDP, we can provide for  $ \utildeqgx $  an intrinsic description.

\vskip11pt

\begin{definition}  \label{def: U'qgx}
 For every  $ \, n \in \N \, $,  let  $ \, \delta_n := (\id - \iota \circ \epsilon) \circ \Delta^{(n)} \, $.  Then for  $ \uqgx $  as in  Definition \ref{def: polyn_Uqgx},  we set
 \vskip5pt
   \hfill   $  {\uqgx}'  \; := \;  \Big\{\, \eta \in \uqgx \,\Big|\; \delta_n(\eta) \in {(\,q-1)}^n\,{\uqgx}^{\otimes n} \;\; \forall \; n \in \N \,\Big\}  $   \hfill
\end{definition}

\vskip3pt

   Indeed, the above just applies the general definition of the Drinfeld's functor  $ \, H \mapsto H' \, $  for Hopf algebras  $ H $  in much larger generality (than QUEA's alone), which is detailed in  \cite[\S 2.1]{gavarini-07}  extending Drinfeld's original idea.

\vskip3pt

   The key point now is our next result:

\vskip11pt

\begin{theorem}  \label{thm: uqgx'=utildeqgx}
 With notation as before, we have  $ \,\; \utildeqgx \, = \, {\uqgx}' \;\, $.
\end{theorem}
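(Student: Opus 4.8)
The plan is to follow closely the pattern of the proof of Theorem~\ref{thm: uhgx'=utildehgx}, replacing the formal parameter $\hbar$ by $(q-1)$ and the $\hbar$--adic framework of \cite{gavarini-02} by the polynomial framework of \cite{gavarini-07}; the two inclusions $\utildeqgx\subseteq{\uqgx}'$ and ${\uqgx}'\subseteq\utildeqgx$ will be established separately. Throughout I recall that, for any Hopf algebra, the maps $\delta_n$ of Definition~\ref{def: U'qgx} are multiplicative-type and that ${\uqgx}'$ is a unital $\kqqm$--subalgebra of $\uqgx$, by the formal properties of $\delta_n$ recorded in \cite[\S 2.1]{gavarini-07}.

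For the inclusion $\utildeqgx\subseteq{\uqgx}'$ I would check directly, using the explicit Hopf structure of Proposition~\ref{prop: pres-utildeqgx}, that all generators of $\utildeqgx$ listed in Definition~\ref{def: Utildeqgx} lie in ${\uqgx}'$. Each $\barK_\alpha^{\pm1}$ is group-like with $\barK_\alpha^{\pm1}-1\in(q-1)\,\uqgx$, whence $\delta_n\big(\barK_\alpha^{\pm1}\big)=\big(\barK_\alpha^{\pm1}-1\big)^{\otimes n}\in(q-1)^n\,\uqgx^{\otimes n}$ and thus $\barK_\alpha^{\pm1}\in{\uqgx}'$; since ${\uqgx}'$ is a unital $\kqqm$--subalgebra it then also contains $\barH_\alpha=\barK_\alpha-1$. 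The only point requiring a short computation is that $\barX_\alpha^\pm=(q-1)\,\dot X_\alpha^\pm\in{\uqgx}'$: here one uses that, modulo $(q-1)$, the element $\dot X_\alpha^\pm$ is skew-primitive (the remaining summands of $\Delta\big(\dot X_\alpha^\pm\big)$ already carry a factor $(q-1)$), together with $\barK_\gamma^{\pm1}-1\in(q-1)\,\uqgx$, to bound from below by $n$ the $(q-1)$--adic order of $\delta_n\big(\barX_\alpha^\pm\big)$. As all defining generators of $\utildeqgx$ thus belong to ${\uqgx}'$, this yields $\utildeqgx\subseteq{\uqgx}'$.

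The reverse inclusion ${\uqgx}'\subseteq\utildeqgx$ is the substantial one, and it is the polynomial counterpart of the way Lemma~\ref{lemma: generation-ughx'-by-x'_i} was exploited in Theorem~\ref{thm: uhgx'=utildehgx}. By Theorem~\ref{thm: specializ-Uqgx} the specialization $\uqgx\big/(q-1)\,\uqgx$ is $U(\g_X)$, with $\dot H_\alpha$ and $\dot X_\alpha^\pm$ mapping onto the spanning set $\{\xi_\alpha,\,2\,x_\alpha^\pm\}_{\alpha\in\intsf(X)}$ of $\g_X$ (the fact that this is only a spanning set, because of the relations $\xi_{\alpha\oplus\beta}=\delta_{\alpha\oplus\beta}(\xi_\alpha+\xi_\beta)$ and of the Serre-type relations indexed by $\serre{X}$, does not affect the argument, exactly as in the formal case). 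I would then prove the polynomial analog of Lemma~\ref{lemma: generation-ughx'-by-x'_i}: for nonzero $\eta\in{\uqgx}'$ write $\eta=(q-1)^n\eta_0$ with $\eta_0\notin(q-1)\,\uqgx$, and invoke the polynomial version of the filtration estimate \cite[Lemma 3.3]{gavarini-02} supplied by \cite{gavarini-07}, namely that the canonical-filtration degree of $\big(\eta_0\bmod(q-1)\big)\in U(\g_X)$ does not exceed $n$; rewriting the corresponding PBW expression in the rescaled generators produces $\eta$ as a $\kqqm$--polynomial in $\barH_\alpha=(q-1)\,\dot H_\alpha$, $\barX_\alpha^\pm=(q-1)\,\dot X_\alpha^\pm$ and $\barK_\alpha^{\pm1}$. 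In structural terms, ${\uqgx}'$ turns out to be the $\kqqm$--span of the rescaled PBW monomials, each of which is a product of these rescaled generators; hence ${\uqgx}'\subseteq\utildeqgx$, and together with the first part this gives $\utildeqgx={\uqgx}'$. The Borel statements $\utildeqbxpm={\uqbxpm}'$ follow by the same argument restricted to the relevant generators.

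I expect the main obstacle to be precisely this last generation statement. Unlike the formal case, where $\kh$ is local and every element factors uniquely as $\hbar^n$ times a unit, here $\kqqm$ is not local, so the decomposition $\eta=(q-1)^n\eta_0$ and the ensuing inductive rewriting must be grounded in the $(q-1)$--torsion-freeness and $(q-1)$--adic separatedness of $\uqgx$ and in the existence of a PBW-type $\kqqm$--basis of $\uqgx$ that specializes to the PBW basis of $U(\g_X)$; this is exactly the role of the polynomial machinery of \cite{gavarini-07}, patterned for the affine case on \cite{beck-kac}. A subsidiary technical point to monitor is the interplay of the two toral families $\barK_\alpha^{\pm1}$ and $\barH_\alpha$, linked by $\barK_\alpha=1+\barH_\alpha$, so that no spurious localization at $q=1$ is introduced when passing between group-like and primitive-type generators.
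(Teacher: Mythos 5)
Your proposal is correct and follows essentially the paper's strategy --- two separate inclusions, the easy one by testing generators against the maps $\delta_n$, the hard one via a PBW-type spanning set, a filtration-degree estimate, and rewriting in the rescaled generators --- but two divergences are worth recording. First, your treatment of the toral generators is actually \emph{slicker} than the paper's: since $\barK{}_\alpha^{\,\pm 1}$ is group-like, $\delta_n\big(\barK{}_\alpha^{\,\pm 1}\big) = \big(\barK{}_\alpha^{\,\pm 1}-1\big)^{\otimes n}$, and $\barK{}_\alpha^{\,\pm 1}-1 \in (q-1)\,\uqgx$ (for the inverse, $\dot{K}_\alpha^{-1}-1 = -(q-1)\,\dot{K}_\alpha^{-1}\dot{H}_\alpha$), which immediately gives $\barK{}_\alpha^{\,\pm 1}\in{\uqgx}'$; the paper instead first places $\barH_\alpha$, $\barX{}_\alpha^{\,\pm}$ in ${\uqgx}'$, deduces $\barK{}_\alpha^{\,+1}=1+\barH_\alpha$, and then needs a truncated geometric-series trick, $\barK{}_\alpha^{\,+1}\sum_{n=0}^{N-1}(-1)^n\barH{}_\alpha^{\,n} = 1-(-1)^N\barH{}_\alpha^{\,N}\in 1+(q-1)^N\uqgx$, to capture $K_\alpha^{-1}$. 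Second, for the hard inclusion you frame the argument as an open-ended inductive rewriting modeled on Lemma~\ref{lemma: generation-ughx'-by-x'_i}, and you correctly identify the soft spot yourself: since $\kqqm$ is not local and ${\uqgx}'$ admits no infinite series, the formal-case iteration (which converges $\hbar$-adically) has no polynomial counterpart, and torsion-freeness and $(q-1)$-adic separatedness alone do not force it to terminate. The paper resolves exactly this by avoiding iteration altogether: it expands $\eta\in{\uqgx}'$ as a \emph{finite} sum $\eta=\sum_{\ell=0}^{N}(q-1)^{\ell}\eta_\ell$ along the PBW spanning set, with each nonzero $\eta_\ell\notin(q-1)\,\uqgx$, supposes a minimal index $\ell_0$ with $(q-1)^{\ell_0}\eta_{\ell_0}\notin\utildeqgx$, applies the degree estimate (quoted there from \cite[Lemma 4.12]{etingof-kazhdan-96} --- the same estimate you invoke as the polynomial avatar of \cite[Lemma 3.3]{gavarini-02}) to $\eta_+:=\eta-\sum_{\ell<\ell_0}(q-1)^{\ell}\eta_\ell\in{\uqgx}'$ to bound the PBW degree of the coset $\overline{\eta}_{\ell_0}$ by $\ell_0$, and concludes that $(q-1)^{\ell_0}\eta_{\ell_0}$ rewrites with \emph{nonnegative} powers of $(q-1)$ in the rescaled monomials, hence lies in $\utildeqgx$ --- a contradiction. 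Your closing assertion that ${\uqgx}'$ is the $\kqqm$--span of the rescaled PBW monomials is precisely the conclusion this yields; to make your sketch airtight, replace the iteration by this finite-expansion, minimal-counterexample device.
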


\begin{proof}
 The proof adapts the arguments used for  Theorem \ref{thm: uhgx'=utildehgx},  up to technicalities.
 \vskip5pt
   By the properties of the maps  $ \delta_n \, (\, n \in \N \,) \, $  explained in  \cite[Lemma 3.2]{gavarini-07},  one gets easily that  $ \, {\uqgx}' \, $  is a unital  $ \kqqm $--subalgebra  of  $ \uqgx \, $.  Moreover, a direct check shows that  $ \; \barH_\ia \, , \barX{}_\ia^\pm \in {\uqgx}' \; $  for all  $ \, \ia \in \intsf(X) \, $;  \,but then it is also  $ \, \barK{}_\alpha^{+1} = 1 + \barH_\alpha \, \in \, {\uqgx}' \, $.  On the other hand, we have
  $$  \barK{}_\alpha^{\,+1} \cdot {\textstyle \sum_{n=0}^{N-1}} {(-1)}^n \barH{}_\alpha^{\,n}  \; = \;  1 - {(-1)}^N \barH{}_\alpha^{\,N}  \; \in \;  1 + {(\,q-1)}^N \, \uqgx   \eqno \forall \;\; N \in \N_+  \quad  $$
 and then multiplying by  $ \, K_\alpha^{\,+1} = \barK{}_\alpha^{\,+1} \, $  this yields
  $$  \displaylines{
   \quad   {\textstyle \sum_{n=0}^{N-1}} {(-1)}^n \barH{}_\alpha^{\,n}  \; = \;  K_\alpha^{\,-1} - {(-1)}^N \barH{}_\alpha^{\,N} \, \barK{}_\alpha^{\,-1}  \quad \Longrightarrow   \hfill  \cr
   \hfill   \Longrightarrow \quad  K_\alpha^{\,-1} \, = \, {\textstyle \sum_{n=0}^{N-1}} {(-1)}^n \barH{}_\alpha^{\,n} + {(-1)}^N \barH{}_\alpha^{\,N} \, \barK{}_\alpha^{\,-1}  \, = \;  \eta_N + \chi_N   \quad }  $$
 for all  $ \, N \in \N_+ \, $,  \,where  $ \, \eta_N \in {\uqgx}' \, $  and  $ \, \chi_N \in {(\,q-1)}^N \uqgx \, $.  But then
  $$  \delta_N\big( K_\alpha^{\,-1} \big)  \; = \;  \delta_N(\eta_N) + \delta_N(\chi_N)  \; \in \;  {(\,q-1)}^N \uqgx^{\otimes N}   \eqno \forall \;\; N \in \N  $$
 whence  $ \, K_\alpha^{\,-1} \in {\uqgx}' \, $  as well  ($ \, \ia \in \intsf(X) \, $).  Thus all the generators of  $ \utildeqgx $  belong to  $ {\uqgx}' $  and the latter is a  $ \kqqm $--algebra,  hence  $ \; \utildehgx \subseteq {\uhgx}' \; $.
 \vskip5pt
   As to the converse inclusion, we need first to lay the groundwork.
                                                             \par
   First of all, let us denote by  $ \, \uqnxp \, $,  resp.\  $ \, \uqfx \, $,  resp.\  $ \, \uqnxm \, $,  \,the unital  $ \kqqm $--subalgebra  of  $ \uqgx $  generated by the  $ \dot{X}{}^+_\alpha $'s,  \,resp.\ the  $ \dot{K}{}^{\pm 1}_\alpha $'s  and the  $ \dot{H}_\alpha $'s,  \,resp.\ the  $ \dot{X}{}^-_\alpha $'s.  Then, by a standard argument, one easily deduces from the relations in the presentation of  $ \uqgx $   --- cf.\ Proposition \ref{prop: Hopf-str_Uqgx}  ---   that the latter admits the ``triangular decomposition''
\begin{equation}  \label{eq: triang-decomp_Uqgx}
   \uqgx  \;\; \cong \;\;  \uqnxp \otimes \uqfx \otimes \uqnxm
\end{equation}
   \indent   As a second step, let us fix any total order on  $ \intsf(X) \, $.  Then, from the relations among all the  $ \dot{X}_\alpha^{\,\pm} $'s,  one easily finds (again by standard arguments) that the set of ordered monomials
%
%
 $ \; \Big\{\, {\overrightarrow{\prod}_\ia} {\big( \dot{X}_\ia^{\pm} \big)}^{n_\ia^\pm} \,\Big|\; n_\ia^\pm \in \N \, , \; \forall \; \ia \in \intsf(X) \, , \textsl{\ a.\ a.\ } 0 \Big\} \; $
 --- where  ``$ \, \overrightarrow\prod \, $''  denotes an ordered product, and the  $ \ia $'s  range within  $ \intsf(X) \, $,  \,and again  ``$ \, \textsl{\ a.\ a.\ 0} \, $''  stands for ``\,almost all 0\,'' ---   is a PBW-type spanning set of  $ \, \calU_q\big(\hskip1pt\n^\pm_X\big) $  over  $ \, \kqqm \, $   --- in fact, it is even a  $ \kqqm $--basis.  Similarly, from the relations among all the  $ \dot{H}_\alpha $'s  and all the  $ \dot{K}{}^{\pm 1}_\gamma $'s  one finds that the set of (ordered) products
 $ \; \Big\{\, {\overrightarrow{\prod}_\ia} {\big( \dot{H}_\ia \big)}^{e^+_\ia} {\big( \dot{K}_\ia^{-1} \big)}^{n^-_\ia} \,\Big|\; e_\ia^\pm \in \N \, , \; 0 \in \big\{ e_\ia^+ , e_\ia^- \big\} \, , \; \forall \; \ia \in \intsf(X) \, , \textsl{\ a.\ a.\ } 0 \Big\} \; $
 is a PBW-type spanning set of  $ \, \uqfx $  over  $ \, \kqqm \, $.  In the end, this together with  \eqref{eq: triang-decomp_Uqgx}  tells us that the set of all ordered monomials of the form
\begin{equation}  \label{eq: PBW-spanset x uqgx}
  {\textstyle \overrightarrow{\prod\limits_\ia}} {\big( \dot{X}_\ia^+ \big)}^{n_\ia^+} \cdot \,
   {\textstyle \overrightarrow{\prod\limits_\ia}} {\big( \dot{H}_\ia \big)}^{e^+_\ia} \! {\big( \dot{K}_\ia^{-1} \big)}^{e^-_\ia} \cdot \,
   {\textstyle \overrightarrow{\prod\limits_\ia}} {\big( \dot{X}_\ia^- \big)}^{n_\ia^-}
\end{equation}
 is a  $ \kqqm $--spanning  set for  $ \uqgx \, $.  On the other hand, we recall   --- cf.\ the proof of  Theorem \ref{thm: uhgx'=utildehgx}  ---   that, at the semiclassical level), the set of ordered monomials
\begin{equation}   \label{eq: span-set x ugx}
  \bigg\{\; {\textstyle \overrightarrow{\prod\limits_\ia}} {\big( x_\ia^+ \big)}^{n_\ia^+} \, {\textstyle \overrightarrow{\prod\limits_\ia}} \, \xi_\ia^{\,e_\ia} \, {\textstyle \overrightarrow{\prod\limits_\ia}} {\big( x_\ia^- \big)}^{n_\ia^-} \;\bigg|\;\, n_\ia^+ \, , e_\ia \, , n_\ia^- \in \N \, , \,\; \forall \; \ia \in \intsf(X) \, , \textsl{\ a.\ a.\ } 0 \;\bigg\}
\end{equation}
 is a PBW-type  $ \Bbbk $--spanning  set for  $ U(\g_X) \, $.  Note also that, through the isomorphism in  Theorem \ref{thm: specializ-Uqgx},  every PBW monomial in  \eqref{eq: PBW-spanset x uqgx}  maps onto a corresponding monomial in  \eqref{eq: span-set x ugx}  multiplied by a suitable power of 2\,.
 \vskip5pt
   Now let  $ \, \eta \in {\uqgx}' \, $.  Using the spanning set in  \eqref{eq: PBW-spanset x uqgx},  we can expand  $ \eta $  as
\begin{equation}  \label{eq: (q-1)-expansion x eta}
   \eta \; = \, {\textstyle \sum_{\ell=0}^N} \, {(\,q-1)}^\ell \, \eta_\ell   \;\quad  \text{with}  \quad  \eta_\ell \in \big( \uqgx \setminus (\,q-1)\,\uqgx \big) \cup \{0\}   \quad \forall \;\; \ell
\end{equation}
 in such a way that each non-zero  $ \eta_\ell $  in  \eqref{eq: (q-1)-expansion x eta}  in turn expands as a linear  combination of quantum PBW monomials from  \eqref{eq: PBW-spanset x uqgx},  say
\begin{equation}  \label{eq: qPBW-expansion_polyn}
  \eta_\ell  \;\; = \;\hskip-3pt  \sum_{\underline{n}^+ \!, \, \underline{e}^+ , \, \underline{e}^- , \, \underline{n}^-} \hskip-9pt
  \kappa_{\ell\,;\,\underline{n}^-\!,\,\underline{e}^-}^{\,\underline{e}^+\!,\,\underline{e}^+} \cdot
   {\textstyle \overrightarrow{\prod\limits_\ia}} {\big( \dot{X}_\ia^+ \big)}^{n_\ia^+} \cdot \,
   {\textstyle \overrightarrow{\prod\limits_\ia}} {\big( \dot{H}_\ia \big)}^{e^+_\ia} \! {\big( \dot{K}_\ia^{-1} \big)}^{e^-_\ia} \cdot \,
   {\textstyle \overrightarrow{\prod\limits_\ia}} {\big( \dot{X}_\ia^- \big)}^{n_\ia^-}
\end{equation}
 with  $ \; \kappa_{\ell\,;\,\underline{n}^-\!,\,\underline{e}^-}^{\,\underline{e}^+\!,\,\underline{e}^+}
\in \kqqm \setminus (\,q-1)\,\kqqm \; $   --- where  $ \underline{n}^\pm $,  resp.\  $ \underline{e}^\pm \, $,  is the string of the  $ n_\ia^\pm $'s,  resp.\ of the  $ e_\ia^\pm $'s.
 \vskip3pt
   If  $ \; {(\,q-1)}^\ell \, \eta_\ell \in \utildeqgx \; $  for all  $ \, \ell \in \{0\,,\dots,N\} \, $,  \,then  $ \, \eta \in \utildeqgx \, $   --- by the first part of the proof ---   and we are done.  Otherwise, let  $ \, \ell_0 \in \{0\,,\dots,N\} \, $  be the smallest index such that  $ \; {(\,q-1)}^{\,\ell_0} \, \eta_{\ell_0} \not\in \utildeqgx \; $;  \,then
  $$  \eta_+  \; := \;  \eta - {\textstyle \sum_{\ell = 0}^{\ell_0 - 1}} {(\,q-1)}^{\,\ell} \, \eta_\ell  \; = \;  {\textstyle \sum_{\ell = \ell_0}^N} {(\,q-1)}^{\,\ell} \, \eta_\ell  \,\; \in \;\,  {\uqgx}'  $$
 By construction we have  $ \, \eta_{\ell_0} \in \uqgx \setminus {(\,q-1)} \, \uqgx \, $,  \,thus the coset
  $$  \overline{\eta}_{\ell_0}  \; := \;  \big(\, \eta_{\ell_0} \!\! \mod {(\,q-1)}\,\uqgx \,\big)  $$
 in  $ \; \uqgx \Big/ {(\,q-1)}\,\uqgx \, = \, U(\g_X) \; $  is non-zero, hence it has a well-defined degree  $ \, \partial\big( \overline{\eta}_{\ell_0} \big) \, $  with respect to the standard filtration in  $ U(\g_X) \, $.  But then  \cite[Lemma 4.12]{etingof-kazhdan-96}  gives  $ \, \partial\big( \overline{\eta}_{\ell_0} \big) \leq \ell_0 \, $:  \,therefore, the cosets of all quantum PBW monomials occurring (with non-zero coefficients) in  \eqref{eq: qPBW-expansion_polyn}  when  $ \, \ell = \ell_0 \, $  are (semiclassical) PBW monomials from  \eqref{eq: span-set x ugx}  that have degree bounded by  $ \, \ell_0 \, $.  Since the coset of the quantum PBW monomial
 $ \; {\textstyle \overrightarrow{\prod\limits_\ia}} {\big( \dot{X}_\ia^+ \big)}^{n_\ia^+} \cdot \,
   {\textstyle \overrightarrow{\prod\limits_\ia}} {\big( \dot{H}_\ia \big)}^{e^+_\ia} \! {\big( \dot{K}_\ia^{-1} \big)}^{e^-_\ia} \cdot \,
   {\textstyle \overrightarrow{\prod\limits_\ia}} {\big( \dot{X}_\ia^- \big)}^{n_\ia^-} \; $
 is the rescaled (semiclassical) PBW monomial
 $ \; 2^{|\,\underline{n}^+| + |\,\underline{e}^+| + |\,\underline{n}^-|} \, {\textstyle \overrightarrow{\prod\limits_\ia}} {\big( x_\ia^+ \big)}^{n_\ia^+} \cdot \,
   {\textstyle \overrightarrow{\prod\limits_\ia}} {\big( \xi_\ia \big)}^{e^+_\ia} \cdot \,
   {\textstyle \overrightarrow{\prod\limits_\ia}} {\big( x_\ia^- \big)}^{n_\ia^-} \; $
 --- where by  $ \, |\,\underline{s}| \, $  we denote the sum of all elements in any string  $ \, \underline{s} = {(s_\ia)}_{\ia \in \intsf(X)} \, $  with almost all entries being zero ---   we conclude that this bound on the degree reads
  $$  |\,\underline{n}^+| + |\,\underline{e}^+| + |\,\underline{n}^-|  \; \leq \;  \ell_0   \qquad \qquad  \forall \quad  \kappa_{\ell\,;\,\underline{n}^-\!,\,\underline{e}^-}^{\,\underline{e}^+\!,\,\underline{e}^+} \not= \, 0  $$
 But this implies
  $$  \displaylines{
   {(\,q-1)}^{\ell_0} \, \eta_{\ell_0}  \; = \hskip-7pt  \sum_{\underline{n}^+ \!, \, \underline{e}^+ , \, \underline{e}^- , \, \underline{n}^-} \hskip-13pt
 \kappa_{\ell\,;\,\underline{n}^-\!,\,\underline{e}^-}^{\,\underline{e}^+\!,\,\underline{e}^+} \, {(\,q-1)}^{\ell_0} \cdot
  {\textstyle \overrightarrow{\prod\limits_\ia}} {\big( \dot{X}_\ia^+ \big)}^{n_\ia^+} \,
  {\textstyle \overrightarrow{\prod\limits_\ia}} {\big( \dot{H}_\ia \big)}^{e^+_\ia} \! {\big( \dot{K}_\ia^{-1} \big)}^{e^-_\ia} \,
  {\textstyle \overrightarrow{\prod\limits_\ia}} {\big( \dot{X}_\ia^- \big)}^{n_\ia^-}  \; =  \cr
   = \hskip-7pt  \sum_{\underline{n}^+ \!, \, \underline{e}^+ , \, \underline{e}^- , \, \underline{n}^-} \hskip-13pt
 \kappa_{\ell_0;\,\underline{n}^-\!,\,\underline{e}^-}^{\,\underline{e}^+\!,\,\underline{e}^+} \, {(\,q-1)}^{\ell_0 - |\,\underline{n}^+| + |\,\underline{e}^+| + |\,\underline{n}^-|} \cdot
  {\textstyle \overrightarrow{\prod\limits_\ia}} {\big( \bar{X}_\ia^+ \big)}^{n_\ia^+} \,
  {\textstyle \overrightarrow{\prod\limits_\ia}} {\big( \bar{H}_\ia \big)}^{e^+_\ia} \! {\big( \bar{K}_\ia^{-1} \big)}^{e^-_\ia} \,
  {\textstyle \overrightarrow{\prod\limits_\ia}} {\big( \bar{X}_\ia^- \big)}^{n_\ia^-}  }  $$
 which means  $ \; {(\,q-1)}^{\ell_0} \, \eta_{\ell_0} \in \utildeqgx \, $   --- by the first part of the proof ---   against the assumption  $ \; {(\,q-1)}^{\ell_0} \, \eta_{\ell_0} \not\in \utildeqgx \; $,  \,a contradiction.
\end{proof}

\bigskip
 \bigskip

\bigskip
 \bigskip

\end{document}